\documentclass{article}
\usepackage[LGR,T1]{fontenc}
\usepackage[latin9]{inputenc}
\usepackage{color}
\usepackage{amsmath}
\usepackage{amsthm}
\usepackage{amssymb}
\usepackage{graphicx}
\usepackage[unicode=true,
 bookmarks=true,bookmarksnumbered=true,bookmarksopen=true,bookmarksopenlevel=3,
 breaklinks=false,pdfborder={0 0 1},backref=false,colorlinks=true]
 {hyperref}
\hypersetup{
 pdfauthor={Lei Yu},
 pdfborderstyle=,pdfborderstyle={},pdfborderstyle={},pdfborderstyle={},pdfborderstyle={},pdfborderstyle={},pdfborderstyle={},pdfborderstyle={},pdfborderstyle={},pdfborderstyle={},pdfborderstyle={},pdfborderstyle={},pdfborderstyle={},pdfborderstyle={},pdfborderstyle={},pdfborderstyle={},pdfborderstyle={},pdfpagelayout=OneColumn,pdfnewwindow=true,pdfstartview=XYZ,plainpages=false,linkcolor=blue,urlcolor=blue,citecolor=red,anchorcolor=blue,linkcolor=blue,urlcolor=blue,citecolor=red,anchorcolor=blue}

\makeatletter
\theoremstyle{plain}
\newtheorem{thm}{\protect\theoremname}
\theoremstyle{remark}
\newtheorem{rem}[]{\protect\remarkname}
\theoremstyle{plain}
\newtheorem{cor}[]{\protect\corollaryname}
\theoremstyle{plain}
\newtheorem{lem}[]{\protect\lemmaname}
\theoremstyle{definition}
\newtheorem{defn}[]{\protect\definitionname}
\theoremstyle{plain}
\newtheorem{prop}[]{\protect\propositionname}

 
\usepackage{fullpage}

\usepackage{color}
\usepackage{array}
\usepackage{url}



\usepackage{latexsym}
\usepackage{bm}
\usepackage{wrapfig}
\usepackage{fancybox}

\usepackage{bm}
\usepackage{epstopdf}
\usepackage{float}
\usepackage{subfloat}
\usepackage{array}
\usepackage{tabularx}
\usepackage{multirow}
\usepackage{tikz}
\usepackage{algorithmic}

\DeclareMathOperator*{\esssup}{\mathrm{ess\,sup}}
\DeclareMathOperator*{\essinf}{\mathrm{ess\,inf}}

\tikzstyle{arw}=[->,>=latex]
\tikzstyle{node}=[draw,rectangle,rounded corners, minimum width=1cm,minimum height =.75 cm]

\usepackage{color}

\usepackage{adjustbox}

\usepackage{algorithmic}

\providecommand{\corollaryname}{Corollary}
\providecommand{\definitionname}{Definition}
\providecommand{\lemmaname}{Lemma}
\providecommand{\remarkname}{Remark}
\providecommand{\theoremname}{Theorem}
\usepackage{amsfonts}
\usepackage{amsopn}
\usepackage{bbm}


 \def\clap#1{\hbox to 0pt{\hss#1\hss}}


\allowdisplaybreaks
\providecommand{\keywords}[1]{\textbf{{Index terms---}} #1}

\providecommand{\corollaryname}{Corollary}
\providecommand{\definitionname}{Definition}
\providecommand{\lemmaname}{Lemma}
\providecommand{\propositionname}{Proposition}
\providecommand{\remarkname}{Remark}
\providecommand{\theoremname}{Theorem}

\makeatother

\providecommand{\corollaryname}{Corollary}
\providecommand{\definitionname}{Definition}
\providecommand{\lemmaname}{Lemma}
\providecommand{\propositionname}{Proposition}
\providecommand{\remarkname}{Remark}
\providecommand{\theoremname}{Theorem}

\begin{document}
\title{On Conditional Correlations }

\author{Lei Yu \thanks{ Department of Electrical and Computer Engineering, National University
of Singapore, Singapore (Email: leiyu@nus.edu.sg). } }

\maketitle
\begin{abstract}
The Pearson correlation, correlation ratio, and maximal correlation
have been well-studied in the literature. In this paper, we study
the conditional versions of these quantities. We extend the most important
properties of the unconditional versions to the conditional versions,
and also derive some new properties. Based on the conditional maximal
correlation, we define an information-correlation function of two
arbitrary random variables, and use it to derive an impossibility
result for the problem of the non-interactive simulation of random
variables. 
\end{abstract}
\keywords{Correlation coefficient, correlation ratio, maximal correlation, information-correlation function, non-interactive
simulation} 

\section{Introduction}

In the literature, there are various measures available to quantify
the strength of the dependence between two random variables. These
include the Pearson correlation coefficient, the correlation ratio,
the maximal correlation coefficient, etc. The Pearson correlation
coefficient is a well-known measure that quantifies the linear dependence
between two real-valued random variables. For real-valued random variables
$X$ and $Y$, it is defined as 
\[
\rho(X;Y)=\left\{ \begin{array}{ll}
\frac{\mathrm{cov}(X,Y)}{\sqrt{\mathrm{var}(X)}\sqrt{\mathrm{var}(Y)}}, & \mathrm{var}(X)\mathrm{var}(Y)>0,\\
0, & \mathrm{var}(X)\mathrm{var}(Y)=0.
\end{array}\right.
\]
The correlation ratio was introduced by Pearson (see e.g. \cite{cramer2016mathematical}),
and studied by Rényi \cite{renyi1959measures,renyi1959new}. For a
real-valued random variable $X$ and a random variable $Y$ defined
on an arbitrary Borel-measurable space, the correlation ratio of $X$
on $Y$ is defined by 
\[
{\displaystyle \theta(X;Y)=\sup_{g}\rho(X;g(Y))},
\]
where the supremum is taken over all Borel-measurable real-valued
functions $g(y)$ such that $\mathrm{var}(g(Y))<\infty$. Rényi \cite{renyi1959measures,renyi1959new}
showed that 
\begin{align*}
\theta(X;Y) & =\sqrt{\frac{\mathrm{var}(\mathbb{E}[X|Y])}{\mathrm{var}(X)}}=\sqrt{1-\frac{\mathbb{E}[\mathrm{var}(X|Y)]}{\mathrm{var}(X)}}.
\end{align*}
Another related measure of dependence is the \emph{Hirschfeld-Gebelein-Rényi
maximal correlation} (or simply \emph{maximal correlation}), which
quantifies the maximum possible (Pearson) correlation between square
integrable real-valued random variables that are respectively generated
by each of two random variables. For two random variables $X$ and
$Y$ defined on arbitrary Borel-measurable spaces, the maximal correlation
between $X$ and $Y$ is defined by 
\[
{\displaystyle \rho_{\mathrm{m}}(X;Y)=\sup_{f,g}\rho(f(X);g(Y))},
\]
where the supremum is taken over all Borel-measurable real-valued
functions $f(x),g(y)$ such that $\mathrm{var}(f(X)),$ $\mathrm{var}(g(Y))<\infty$.
This measure was first introduced by Hirschfeld \cite{hirschfeld1935connection}
and Gebelein \cite{gebelein1941statistische}, then studied by Rényi
\cite{renyi1959measures}. Recently it has been exploited in studying
some information-theoretic problems, such as measuring non-local correlations
\cite{beigi2015monotone}, maximal correlation secrecy \cite{li2018maximal},
deriving converse results for distributed communication \cite{yu2016distortion},
etc. Furthermore, the maximal correlation is also related to the Gács-Körner
or Wyner common information \cite{gacs1973common,witsenhausen1975sequences}.
The Gács-Körner common information is strictly positive, if and only
if the maximal correlation is equal to $1$. The Wyner common information
is strictly positive, if and only if the maximal correlation is strictly
positive.

In this paper, we extend the Pearson correlation, the correlation
ratio, and the maximal correlation to their conditional versions.
We investigate various properties of these correlations. We also introduce
an information-correlation function of two arbitrary random variables,
and use it to derive an impossibility result for the problem of the
non-interactive simulation of random variables.

\section{Definition}

Let $(\Omega,\Sigma,\mathbb{P})$ be a probability space. Let $\left(X,Y,Z,U\right):\left(\Omega,\Sigma\right)\to\left(\mathbb{R}^{4},\mathcal{B}\left(\mathbb{R}^{4}\right)\right)$
be a real-valued random vector, where $\mathcal{B}\left(\mathbb{R}^{4}\right)$
denotes the Borel $\sigma$-algebra on $\mathbb{R}^{4}$. For a random
variable (or random vector) $W$, we denote the {\em probability
distribution} as $P_{W}$, i.e., $P_{W}:=\mathbb{P}\circ W^{-1}$.
If $W$ is discrete, then we use $P_{W}$ to denote the probability
mass function (pmf). If $W$ is absolutely continuous (the distribution
is absolutely continuous respect to the Lebesgue measure), then we
use $p_{W}$ to denote the probability density function (pdf).

In the following, we define several conditional correlations, including
the conditional (Pearson) correlation, the conditional correlation
ratio, and the conditional maximal correlation. \begin{defn} The
\emph{conditional (Pearson) correlation}\footnote{Here $U$ does not need to be real-valued. But for brevity, we assume
it is. Similarly, in the following, $\left(Y,U\right)$ does not need
to be real-valued in the definition of conditional correlation ratio,
and $\left(X,Y,U\right)$ does not need to be real-valued in the definition
of the conditional maximal correlation. } of $X$ and $Y$ given $U$ is defined by 
\[
\rho(X;Y|U)=\left\{ \begin{array}{ll}
\frac{\mathbb{E}[\mathrm{cov}(X,Y|U)]}{\sqrt{\mathbb{E}[\mathrm{var}(X|U)]}\sqrt{\mathbb{E}[\mathrm{var}(Y|U)]}}, & \mathbb{E}[\mathrm{var}(X|U)]\mathbb{E}[\mathrm{var}(Y|U)]>0,\\
0, & \mathbb{E}[\mathrm{var}(X|U)]\mathbb{E}[\mathrm{var}(Y|U)]=0.
\end{array}\right.
\]
\end{defn} \begin{defn} The \emph{conditional correlation ratio}
of $X$ on $Y$ given $U$ is defined by 
\begin{equation}
\theta(X;Y|U)=\sup_{g}\rho(X;g(Y,U)|U),\label{eq:}
\end{equation}
where the supremum is taken over all Borel-measurable real-valued
functions $g(y,u)$ such that $\;\mathbb{E}[\mathrm{var}(g(Y,U)|U)]<\infty$.
\end{defn} \begin{defn} \label{def:For-any-random}The \emph{conditional
maximal correlation} of $X$ and $Y$ given $U$ is defined by 
\[
\rho_{\mathrm{m}}(X;Y|U)=\sup_{f,g}\rho(f(X,U);g(Y,U)|U),
\]
where the supremum is taken over all Borel-measurable real-valued
functions $f(x,u),g(y,u)$ such that $\mathbb{E}[\mathrm{var}(f(X,U)|U)]$,
$\mathbb{E}[\mathrm{var}(g(Y,U)|U)]<\infty$. \end{defn} \begin{rem}
If $U$ is degenerate, then these three conditional correlations reduce
to their unconditional versions. \end{rem} \begin{rem} Note that
$\rho(X;Y|U)=\rho(Y;X|U)$ and $\rho_{\mathrm{m}}(X;Y|U)=\rho_{\mathrm{m}}(Y;X|U)$,
but in general $\theta(X;Y|U)\neq\theta(Y;X|U)$. That is, the conditional
correlation and the conditional maximal correlation are symmetric,
but the conditional correlation ratio is not. \end{rem} By the definitions,
it is easy to verify that 
\begin{equation}
\rho_{\mathrm{m}}(X;Y|U)=\sup_{f}\theta(f(X,U);Y|U),\label{eq:-27}
\end{equation}
where the supremum is taken over all Borel-measurable real-valued
functions $f(x,u)$ such that $\mathbb{E}[\mathrm{var}(f(X,U)|U)]<\infty$.

Note that the unconditional versions of correlation coefficient, correlation
ratio, and maximal correlation have been studied extensively in the
literature; see \cite{renyi1959measures,renyi1959new}. \textcolor{blue}{The
conditional version of maximal correlation was first introduced by
Ardestanizadeh }\textcolor{blue}{\emph{et al.}}\textcolor{blue}{{} \cite{ardestanizadeh2012linear}.
They studied the conditional maximal correlation of Gaussian random
variables, and showed that for this case, the conditional maximal
correlation is equal to the conditional Pearson correlation. They
applied this property to derive upper bounds for the sum-capacity
of a Gaussian multi-access channel (with linear feedback).  Beigi
and Gohari \cite{beigi2015monotone} applied the conditional maximal
correlation to study the problem of non-local correlations in a bipartite
quantum system (which is modeled as a }\textcolor{blue}{\emph{no-signaling
box}}\textcolor{blue}{). For such a no-signaling box, a sub-tensorization
property of the conditional maximal correlation was proven \cite[Corollary 6]{beigi2015monotone}.
The sub-tensorization (or tensorization) property is rather useful
in bounding the correlation between two random vectors, especially
when the two random vectors consist of a large number of i.i.d. pairs
of components. This is because, due to the sub-tensorization (or tensorization)
property, the resulting bound is independent of the number of components
in the vectors, and hence it is non-trivial even when the number of
components tends to infinity.  In \cite{beigi2016duality} Beigi
and Gohari studied the relationship between the conditional maximal
correlation and the conditional hypercontractivity. They also introduced
a general principle to obtain new measures from additivity measures
such that the new measures have both tensorization and data processing
properties.  
 In this paper, we study various properties of the conditional versions
of Pearson correlation, correlation ratio, and maximal correlation
of }\textcolor{blue}{\emph{arbitrary}}\textcolor{blue}{{} random variables
(not restricted to be Gaussian or discrete).} In order to state our
results clearly, we define the various correlations conditioned on
a given event as follows. \begin{defn} Given an event $\mathcal{A}$,
denote the conditional distribution of $(X,Y)$ given $\mathcal{A}$
as $\mathrm{P}_{X,Y|\mathcal{A}}$. Assume $(X',Y')$ is a pair of
random variables satisfying $(X',Y')\sim\mathrm{P}_{X,Y|\mathcal{A}}$.
Then we define $\kappa(X;Y|\mathcal{A}):=\kappa(X';Y')$ as the \emph{event
conditional correlations} of $X$ and $Y$ given $\mathcal{A}$, where
$\kappa\in\left\{ \rho,\theta,\rho_{\mathrm{m}}\right\} $ and $\kappa(X';Y')$
denotes the corresponding unconditional correlation of $X'$ and $Y'$.
\end{defn} Obviously, these event conditional correlations are special
cases of the corresponding conditional correlations. Moreover, if
the distribution of $(X',Y')$ is the same as the conditional distribution
of $(X,Y)$ given $U=u$, then the unconditional correlations of $(X',Y')$
respectively equal the corresponding event conditional correlations
of $(X,Y)$ given $U=u$, i.e., $\kappa(X';Y')=\kappa(X;Y|U=u)$ where
$\kappa\in\left\{ \rho,\theta,\rho_{\mathrm{m}}\right\} $. Moreover,
if the distribution of $U$ satisfies $\mathbb{P}\left(U=u\right)=1$
for some $u$, then the conditional correlations of $(X,Y)$ given
$U$ respectively equal the corresponding event conditional correlations
of $(X,Y)$ given $U=u$, i.e., $\kappa(X;Y|U)=\kappa(X;Y|U=u)$ where
$\kappa\in\left\{ \rho,\theta,\rho_{\mathrm{m}}\right\} $.

\section{Properties}

\subsection{Basic Properties: Other Characterizations, Continuity, and Concavity }

In this subsection, we provide other characterizations for the conditional
correlation ratio and conditional maximal correlation, and then study
continuity (or discontinuity) and concavity of the conditional maximal
correlation. First by definition, we have the following basic properties. 
\begin{thm}
For any random variables $X,Y,Z,U$, we have that 
\begin{align*}
\theta(X;Y,Z|U) & \geq\theta(X;Y|U);\\
\rho_{\mathrm{m}}(X;Y,Z|U) & \geq\rho_{\mathrm{m}}(X;Y|U).
\end{align*}
\end{thm}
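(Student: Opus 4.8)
The plan is to prove both inequalities by the same elementary ``enlargement of the feasible set'' argument: the supremum defining each left-hand side is taken over a larger class of test functions than the supremum defining the corresponding right-hand side, and every test function feasible for the right-hand side embeds into the feasible class of the left-hand side while preserving its value.

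For the correlation ratio, recall that $\theta(X;Y|U)=\sup_{g}\rho(X;g(Y,U)|U)$, where $g(y,u)$ ranges over Borel-measurable functions with $\mathbb{E}[\mathrm{var}(g(Y,U)|U)]<\infty$, whereas $\theta(X;Y,Z|U)=\sup_{h}\rho(X;h(Y,Z,U)|U)$, where $h(y,z,u)$ ranges over Borel-measurable functions with $\mathbb{E}[\mathrm{var}(h(Y,Z,U)|U)]<\infty$. First I would fix any admissible $g(y,u)$ and define $h(y,z,u):=g(y,u)$, ignoring the coordinate $z$. Then $h(Y,Z,U)=g(Y,U)$ as random variables, so the finiteness constraint is literally identical and $h$ is admissible for the left-hand supremum; moreover $\rho(X;h(Y,Z,U)|U)=\rho(X;g(Y,U)|U)$, since the conditional Pearson correlation depends only on the joint law of $(X,g(Y,U),U)$. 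Hence every value attained in the right-hand supremum is also attained in the left-hand supremum, and taking suprema yields $\theta(X;Y,Z|U)\geq\theta(X;Y|U)$.

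The maximal-correlation inequality follows by the same embedding applied to the second function only. Writing $\rho_{\mathrm{m}}(X;Y,Z|U)=\sup_{f,h}\rho(f(X,U);h(Y,Z,U)|U)$ and $\rho_{\mathrm{m}}(X;Y|U)=\sup_{f,g}\rho(f(X,U);g(Y,U)|U)$, I would keep $f(x,u)$ unchanged and again set $h(y,z,u):=g(y,u)$; the constraint on the $f$-side is untouched, the constraint on the $g$-side is preserved verbatim, and the correlation value is unchanged. Taking the supremum over the enlarged pair $(f,h)$ therefore dominates the supremum over $(f,g)$. Alternatively, one can deduce this directly from the correlation-ratio part via the identity \eqref{eq:-27}: for each fixed $f$ the first part gives $\theta(f(X,U);Y,Z|U)\geq\theta(f(X,U);Y|U)$, and taking the supremum over $f$ on both sides gives the claim.

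I do not expect a genuine obstacle here; the only point requiring care is the observation that a function of $(Y,U)$ is a special case of a function of $(Y,Z,U)$, and that this embedding leaves both the admissibility constraint and the conditional correlation value exactly invariant, so that no approximation or limiting argument is needed.
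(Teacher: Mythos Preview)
Your argument is correct and is precisely what the paper has in mind: the paper states that these inequalities hold ``by definition'' without further elaboration, and your enlargement-of-the-feasible-set argument is exactly the unpacking of that remark. There is nothing to add or correct.
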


Next we characterize the conditional correlation ratio and conditional
maximal correlation by ratios of variances. 
\begin{thm}
\label{lem:For-any-random}(Characterization by the ratio of variances).
For any random variables $X,Y,Z,U$, we have that 
\begin{align}
\theta(X;Y|U) & =\sqrt{\frac{\mathbb{E}[\mathrm{var}(\mathbb{E}[X|Y,U]|U)]}{\mathbb{E}[\mathrm{var}(X|U)]}}\nonumber \\
 & =\sqrt{1-\frac{\mathbb{E}[\mathrm{var}(X|Y,U)]}{\mathbb{E}[\mathrm{var}(X|U)]}};\label{eq:-12}\\
\rho_{\mathrm{m}}(X;Y|U) & =\sup_{f}{\sqrt{\frac{\mathbb{E}[\mathrm{var}(\mathbb{E}[f(X,U)|Y,U]|U)]}{\mathbb{E}[\mathrm{var}(f(X,U)|U)]}}}\nonumber \\
 & =\sup_{f}{\sqrt{1-\frac{\mathbb{E}[\mathrm{var}(f(X,U)|Y,U)]}{\mathbb{E}[\mathrm{var}(f(X,U)|U)]}}}.\label{eq:-13}
\end{align}
\end{thm}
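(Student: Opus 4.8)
The plan is to prove the correlation-ratio identity \eqref{eq:-12} first and then obtain the maximal-correlation identity \eqref{eq:-13} as an immediate corollary via the already-established relation \eqref{eq:-27}. Throughout I assume $0<\mathbb{E}[\mathrm{var}(X|U)]<\infty$, since otherwise $\theta(X;Y|U)=0$ by the convention in the definition of $\rho$, and the right-hand side of \eqref{eq:-12} also vanishes because the conditional law of total variance gives $\mathbb{E}[\mathrm{var}(\mathbb{E}[X|Y,U]|U)]\le\mathbb{E}[\mathrm{var}(X|U)]=0$; so the degenerate case is consistent and can be dispatched separately.

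The first key step is a reduction of the numerator of $\rho(X;g(Y,U)|U)$. For a centered variable $\tilde{A}:=A-\mathbb{E}[A|U]$ one has $\mathbb{E}[\mathrm{cov}(A,B|U)]=\mathbb{E}[\tilde{A}\tilde{B}]$, so that $\langle A,B\rangle:=\mathbb{E}[\mathrm{cov}(A,B|U)]$ is a (semi-)inner product, genuine after quotienting out the $U$-measurable functions, with $\|\tilde{A}\|^{2}=\mathbb{E}[\mathrm{var}(A|U)]$. Writing $\hat{X}:=\mathbb{E}[X|Y,U]$ and using the tower property together with the $(Y,U)$-measurability of $g$, a short computation gives $\mathbb{E}[Xg]=\mathbb{E}[\hat{X}g]$ and $\mathbb{E}[X|U]=\mathbb{E}[\hat{X}|U]$, hence $\langle X,g\rangle=\langle\hat{X},g\rangle$. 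In words: the numerator depends on $X$ only through $\hat{X}$, the conditional expectation of $X$ given $(Y,U)$.

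With this in hand the extremal problem becomes transparent. Cauchy--Schwarz in the inner product above yields $\langle\hat{X},g\rangle\le\|\tilde{\hat{X}}\|\,\|\tilde{g}\|$, so
\[
\rho(X;g(Y,U)|U)=\frac{\langle\hat{X},g\rangle}{\|\tilde{X}\|\,\|\tilde{g}\|}\le\frac{\|\tilde{\hat{X}}\|}{\|\tilde{X}\|}=\sqrt{\frac{\mathbb{E}[\mathrm{var}(\hat{X}|U)]}{\mathbb{E}[\mathrm{var}(X|U)]}},
\]
which is the upper bound. Equality is attained by the admissible choice $g=\hat{X}=\mathbb{E}[X|Y,U]$ (admissible since $\mathbb{E}[\mathrm{var}(\hat{X}|U)]\le\mathbb{E}[\mathrm{var}(X|U)]<\infty$), because then $\tilde{g}=\tilde{\hat{X}}$ makes Cauchy--Schwarz tight. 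This proves the first equality in \eqref{eq:-12}; the second equality follows immediately from the conditional law of total variance $\mathbb{E}[\mathrm{var}(X|U)]=\mathbb{E}[\mathrm{var}(X|Y,U)]+\mathbb{E}[\mathrm{var}(\mathbb{E}[X|Y,U]|U)]$ after dividing by $\mathbb{E}[\mathrm{var}(X|U)]$.

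Finally, for \eqref{eq:-13} I would apply \eqref{eq:-12} with $X$ replaced by $f(X,U)$ for an arbitrary admissible $f$, obtaining $\theta(f(X,U);Y|U)$ equal to the two ratio-of-variance expressions with $X$ replaced by $f(X,U)$, and then take the supremum over $f$ using \eqref{eq:-27}. The main obstacle I anticipate is not the algebra but the measure-theoretic bookkeeping: checking that $\langle\cdot,\cdot\rangle$ is well defined and positive semidefinite on the correct space of functions that are square-integrable given $U$, verifying that $\mathbb{E}[X|Y,U]$ is itself square-integrable so that it is a legitimate test function attaining the supremum, and confirming that the convention $\rho=0$ when a conditional variance vanishes renders both sides equal in the degenerate cases. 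These integrability and degeneracy points should be handled cleanly rather than glossed over.
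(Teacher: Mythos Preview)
Your proof is correct and follows precisely the route the paper indicates: the paper omits the argument and simply states that it ``can be proven similarly as the unconditional versions'' of R\'enyi, and your adaptation---setting up the semi-inner product $\langle A,B\rangle=\mathbb{E}[\mathrm{cov}(A,B|U)]$, reducing the numerator to depend only on $\hat X=\mathbb{E}[X|Y,U]$, applying Cauchy--Schwarz with equality at $g=\hat X$, and then invoking \eqref{eq:-27} for the maximal-correlation part---is exactly that adaptation. The integrability and degeneracy checks you flag are routine and your handling of them is sound.
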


\begin{rem}
The correlation ratio is also closely related to the Minimum Mean
Square Error (MMSE). The optimal MMSE estimator is $\mathbb{E}[X|Y,U]$,
hence the variance of the MMSE for estimating $X$ given $(Y,U)$
is $\mathrm{mmse}(X|Y,U)=\mathbb{E}(X-\mathbb{E}[X|Y,U])^{2}=\mathbb{E}[\mathrm{var}(X|Y,U)]=\mathbb{E}[\mathrm{var}(X|U)](1-\theta^{2}(X;Y|U)).$ 
\end{rem}

\begin{rem}
Equation \eqref{eq:-13} was first proven in \cite[Lemma 15]{beigi2016duality}. 
\end{rem}

The unconditional version of Theorem \ref{lem:For-any-random} was
proven by Rényi \cite{renyi1959measures}. Theorem \ref{lem:For-any-random}
can be proven similarly as the unconditional versions in \cite{renyi1959measures}.
Hence the proof is omitted here. Next we characterize conditional
correlations by event conditional correlations. 
\begin{thm}
\label{thm:Alternative-characterization}(Characterization by event
conditional correlations). For any random variables $X,Y,U,$ 
\begin{align}
\rho(X;Y|U) & \leq\esssup_{u}\rho(X;Y|U=u),\label{eq:-3}\\
\essinf_{u}\theta(X;Y|U=u)\leq\theta(X;Y|U) & \leq\esssup_{u}\theta(X;Y|U=u),\label{eq:-5}\\
\rho_{\mathrm{m}}(X;Y|U) & =\esssup_{u}\rho_{\mathrm{m}}(X;Y|U=u),\label{eq:-4}
\end{align}
where $\esssup_{u}f(u):=\inf\left\{ \lambda:\mathbb{P}\left(f(U)>\lambda\right)=0\right\} $
and $\essinf_{u}f(u):=\sup\left\{ \lambda:\mathbb{P}\left(f(U)<\lambda\right)=0\right\} $
respectively denote the essential supremum and the essential infimum
of $f$. 
\end{thm}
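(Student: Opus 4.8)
The plan is to dispatch the three assertions in turn, in each case reducing to the definitions plus the variance characterization of Theorem~\ref{lem:For-any-random}. For the Pearson bound \eqref{eq:-3} I would begin from the pointwise identity $\mathrm{cov}(X,Y|U=u)=\rho(X;Y|U=u)\sqrt{\mathrm{var}(X|U=u)}\sqrt{\mathrm{var}(Y|U=u)}$, which is just the definition of the event conditional correlation rewritten. Setting $M:=\esssup_u\rho(X;Y|U=u)$ and taking expectations over $U$, the multiplier $\sqrt{\mathrm{var}(X|U)\,\mathrm{var}(Y|U)}$ is non-negative, so $\mathbb{E}[\mathrm{cov}(X,Y|U)]\le M\,\mathbb{E}\big[\sqrt{\mathrm{var}(X|U)}\sqrt{\mathrm{var}(Y|U)}\big]$. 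A single application of the Cauchy--Schwarz inequality gives $\mathbb{E}\big[\sqrt{\mathrm{var}(X|U)}\sqrt{\mathrm{var}(Y|U)}\big]\le\sqrt{\mathbb{E}[\mathrm{var}(X|U)]}\sqrt{\mathbb{E}[\mathrm{var}(Y|U)]}$, and dividing by the product of the square roots yields \eqref{eq:-3}. The second step uses $M\ge 0$, so since the estimate is only of interest when the essential supremum is non-negative, I would record that hypothesis explicitly.

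For the correlation-ratio bounds \eqref{eq:-5} I would use Theorem~\ref{lem:For-any-random} to turn everything into a ratio of variances. Write $N(u):=\mathrm{var}(\mathbb{E}[X|Y,U]|U=u)$ and $D(u):=\mathrm{var}(X|U=u)$; by R\'enyi's formula applied to $\mathrm{P}_{X,Y|U=u}$ one has $\theta^2(X;Y|U=u)=N(u)/D(u)$, while Theorem~\ref{lem:For-any-random} gives $\theta^2(X;Y|U)=\mathbb{E}[N(U)]/\mathbb{E}[D(U)]$. The key observation is that $\mathbb{E}[N(U)]/\mathbb{E}[D(U)]$ is a weighted average of the pointwise ratios $N(u)/D(u)$, with non-negative weights $D(u)/\mathbb{E}[D(U)]$ of total mass one; hence it lies between $\essinf_u N(u)/D(u)$ and $\esssup_u N(u)/D(u)$. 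Taking square roots, which preserves the essential infimum and supremum on non-negative quantities, delivers \eqref{eq:-5}. The only point needing care is the degenerate set $\{u:D(u)=0\}$, on which $X$ is a.s.\ constant given $U=u$; such $u$ contribute nothing to either expectation and can be excised.

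For the maximal-correlation identity \eqref{eq:-4} I would prove the two inequalities separately. The upper bound is cleanest via \eqref{eq:-27}: for each admissible $f$, set $W:=f(X,U)$ and apply the upper bound of \eqref{eq:-5} to $(W,Y,U)$ to get $\theta(W;Y|U)\le\esssup_u\theta(W;Y|U=u)$. Since given $U=u$ the variable $W$ is a function of $X$ alone, the event version of \eqref{eq:-27} gives $\theta(W;Y|U=u)\le\rho_{\mathrm m}(X;Y|U=u)$ for a.e.\ $u$; taking the essential supremum and then the supremum over $f$ yields $\rho_{\mathrm m}(X;Y|U)\le\esssup_u\rho_{\mathrm m}(X;Y|U=u)$. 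Routing through correlation ratios in this way sidesteps the sign difficulty of \eqref{eq:-3}, as correlation ratios are non-negative. For the reverse inequality, put $M:=\esssup_u\rho_{\mathrm m}(X;Y|U=u)$, fix $\epsilon>0$, and note that $A:=\{u:\rho_{\mathrm m}(X;Y|U=u)>M-\epsilon\}$ has positive probability. The idea is to glue near-optimal event functions: for $u\in A$ choose $f_u,g_u$ of $x$ and $y$, centered and normalized to unit conditional variance given $U=u$, with $\rho(f_u(X);g_u(Y)|U=u)\ge M-\epsilon$, and set $f=g=0$ off $A$. Then $\mathbb{E}[\mathrm{var}(f|U)]=\mathbb{E}[\mathrm{var}(g|U)]=\mathbb{P}(A)$ while $\mathbb{E}[\mathrm{cov}(f,g|U)]\ge(M-\epsilon)\mathbb{P}(A)$, so $\rho(f(X,U);g(Y,U)|U)\ge M-\epsilon$; letting $\epsilon\downarrow 0$ gives $\rho_{\mathrm m}(X;Y|U)\ge M$. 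The normalization to unit conditional variance is essential, as it is precisely what removes the Cauchy--Schwarz slack that would otherwise degrade the average.

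The main obstacle I anticipate is the measurable selection hidden in the lower-bound construction for \eqref{eq:-4}: the assignments $u\mapsto f_u$ and $u\mapsto g_u$ must be chosen so that $f(x,u):=f_u(x)$ and $g(y,u):=g_u(y)$ are jointly measurable, and the centering means and normalizing variances must depend measurably on $u$. I would resolve this by fixing a countable $L^2$-dense family of bounded measurable test functions, expressing $\rho_{\mathrm m}(X;Y|U=u)$ as a countable supremum of event correlations over this family for a.e.\ $u$, and then selecting for each $u$ a member attaining the supremum within $\epsilon$ (for instance, the least index that does so), which is a measurable choice. The conditional means and variances used for centering and normalization are themselves conditional expectations, hence measurable in $u$, so the glued and normalized $f,g$ are admissible. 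This separability/selection step, rather than any of the displayed inequalities, is where the genuine work lies.
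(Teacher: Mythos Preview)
Your plan matches the paper's approach closely. For \eqref{eq:-3} both you and the paper use Cauchy--Schwarz on the denominator; the paper packages the argument via level sets $\mathcal{A}_\lambda=\{u:\rho(X;Y|U=u)>\lambda\}$ rather than bounding by $M$ directly, but the content is the same, and your caveat that the step needs $M\ge 0$ is a point the paper leaves implicit. For \eqref{eq:-5} your weighted-average observation is exactly what the paper means by ``derivations similar'' to those for \eqref{eq:-3}. For the upper bound in \eqref{eq:-4} the paper works directly with the variance-ratio form \eqref{eq:-13} on the level sets, whereas you route through \eqref{eq:-27} and reuse the upper half of \eqref{eq:-5}; this is a minor repackaging. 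For the lower bound the paper also glues near-optimal functions restricted to a positive-probability set $\mathcal{A}_\lambda$, but it uses a single $\widetilde f(x,u)$ via the correlation-ratio characterization rather than a pair $(f_u,g_u)$; your unit-variance normalization plays the same role as the paper's pointwise ratio in removing the averaging slack. The measurable-selection concern you flag is genuine and is equally present---and equally unaddressed---in the paper's proof, where the joint measurability of $\widetilde f(x,u)$ is simply asserted; your countable-dense-family/least-index fix is a sound way to close that gap.
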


\begin{rem}
It is worth noting that $\rho(X;Y|U)\geq\essinf_{u}\rho(X;Y|U=u)$
does not hold in general. This can be seen from the following example.
Assume $(a,b,\eta)$ are three numbers such that $0<\eta\leq1,0<a<b$.
Suppose that $(W,Z)$ is a pair of random variables such that $\mathrm{var}(W)=a,\mathrm{var}(Z)=b$
and $\rho(W;Z)=\eta$. (It is obvious that there are many random variable
pairs satisfying the conditions.) Denote the distribution of $(W,Z)$
as $\mathrm{P}_{W,Z}$. Now we consider a triple of random variables
$(X,Y,U)$ such that $P_{U}(0)=P_{U}(1)=\frac{1}{2}$ and $(X,Y)|U=0\sim\mathrm{P}_{W,Z}$
and $(X,Y)|U=1\sim\mathrm{P}_{Z,W}$. Then we have $\rho(X;Y|U=0)=\rho(X;Y|U=1)=\eta$.
Hence $\essinf_{u}\rho(X;Y|U=u)=\eta$. However, $\rho(X;Y|U)=\frac{2\eta\sqrt{ab}}{a+b}<\eta$.
Hence $\rho(X;Y|U)<\essinf_{u}\rho(X;Y|U=u)$ for this example. 
\end{rem}

\begin{rem}
If $U$ is a discrete random variable, then 
\begin{equation}
\rho_{\mathrm{m}}(X;Y|U)=\sup_{u:P_{U}(u)>0}\rho_{\mathrm{m}}(X;Y|U=u),\label{eq:-4-1}
\end{equation}
where $P_{U}$ denotes the pmf of $U$. Beigi and Gohari \cite{beigi2015monotone,beigi2016duality}
defined the conditional maximal correlation via \eqref{eq:-4-1}.
Theorem \ref{thm:Alternative-characterization} implies the equivalence
between the conditional maximal correlation defined by us and that
defined by Beigi and Gohari. 
\end{rem}

\begin{rem}
If $U$ is an absolutely continuous random variable, then 
\[
\rho_{\mathrm{m}}(X;Y|U)=\inf_{q_{U}:q_{U}=p_{U}\mathrm{a.e.}}\sup_{u:q_{U}(u)>0}\rho_{\mathrm{m}}(X;Y|U=u),
\]
where $p_{U}$ denotes the pdf of $U$ and $q_{U}$ denotes another
pdf on the same space. 
\end{rem}

\begin{proof}
We first prove \eqref{eq:-3}. Denote $\mathcal{A}_{\lambda}:=\left\{ u:\rho(X;Y|U=u)>\lambda\right\} $
and $\lambda^{*}:=\inf\left\{ \lambda:\mathrm{P}_{U}\left(\mathcal{A}_{\lambda}\right)=0\right\} $.
Hence $P_{U}\left(\mathcal{A}_{\lambda}\right)=0$ for any $\lambda>\lambda^{*}$;
and $\mathrm{P}_{U}\left(\mathcal{A}_{\lambda}\right)>0$ for any
$\lambda<\lambda^{*}$. It means that $\lambda^{*}=\esssup_{u}\rho(X;Y|U=u)$.
Therefore, to show \eqref{eq:-3}, we only need to show $\rho(X;Y|U)\leq\lambda^{*}$.
To this end, we upper bound $\rho(X;Y|U)$ as follows. 
\begin{align}
\rho(X;Y|U) & =\frac{\mathbb{E}[\mathrm{cov}(X,Y|U)]}{\sqrt{\mathbb{E}[\mathrm{var}(X|U)]}\sqrt{\mathbb{E}[\mathrm{var}(Y|U)]}}\label{eq:-28}\\
 & \le\frac{\mathbb{E}[\mathrm{cov}(X,Y|U)]}{\mathbb{E}\sqrt{\mathrm{var}(X|U)\mathrm{var}(Y|U)}}\label{eq:-6}\\
 & =\inf_{\lambda>\lambda^{*}}\frac{\mathbb{E}[\mathrm{cov}(X,Y|U)\cdot1\left\{ U\in\mathbb{R}\backslash\mathcal{A}_{\lambda}\right\} ]}{\mathbb{E}\left[\sqrt{\mathrm{var}(X|U)\mathrm{var}(Y|U)}\cdot1\left\{ U\in\mathbb{R}\backslash\mathcal{A}_{\lambda}\right\} \right]}\label{eq:-29}\\
 & \le\inf_{\lambda>\lambda^{*}}\sup_{u\in\mathbb{R}\backslash\mathcal{A}_{\lambda}}\rho(X;Y|U=u)\nonumber \\
 & \leq\inf_{\lambda>\lambda^{*}}\lambda=\lambda^{*},\label{eq:-30}
\end{align}
where \eqref{eq:-6} follows by the Cauchy-Schwarz inequality, and
\eqref{eq:-29} follows from \cite[Theorem 15.2 (v)]{billingsley2008probability}
and the fact $\mathrm{P}_{U}\left(\mathcal{A}_{\lambda}\right)=0$
for any $\lambda>\lambda^{*}$.

By using the relationship \eqref{eq:-12} and by derivations similar
as \eqref{eq:-28}-\eqref{eq:-30}, it is easy to obtain \eqref{eq:-5}.

Finally, we prove \eqref{eq:-4}. Similarly as in the proof above,
we denote $\mathcal{A}_{\lambda}:=\left\{ u:\rho_{\mathrm{m}}(X;Y|U=u)>\lambda\right\} $
and $\lambda^{*}:=\inf\left\{ \lambda:\mathrm{P}_{U}\left(\mathcal{A}_{\lambda}\right)=0\right\} $.
Hence $\mathrm{P}_{U}\left(\mathcal{A}_{\lambda}\right)=0$ for any
$\lambda>\lambda^{*}$; $\mathrm{P}_{U}\left(\mathcal{A}_{\lambda}\right)>0$
for any $\lambda<\lambda^{*}$; and $\lambda^{*}=\esssup_{u}\rho_{\mathrm{m}}(X;Y|U=u)$.
Therefore, to prove \eqref{eq:-4}, we only need to show $\rho_{\mathrm{m}}(X;Y|U)=\lambda^{*}$.
On one hand, by derivations similar as \eqref{eq:-28}-\eqref{eq:-30},
we can upper bound $\rho_{\mathrm{m}}(X;Y|U)$ as follows. 
\begin{align*}
\rho_{\mathrm{m}}(X;Y|U) & =\sup_{f}{\displaystyle \sqrt{\frac{\mathbb{E}[\mathrm{var}(\mathbb{E}[f(X,U)|Y,U]|U)]}{\mathbb{E}[\mathrm{var}(f(X,U)|U)]}}}\\
 & =\sup_{f}\inf_{\lambda>\lambda^{*}}{\displaystyle \sqrt{\frac{\mathbb{E}[\mathrm{var}(\mathbb{E}[f(X,U)|Y,U]|U)\cdot1\left\{ U\in\mathbb{R}\backslash\mathcal{A}_{\lambda}\right\} ]}{\mathbb{E}[\mathrm{var}(f(X,U)|U)\cdot1\left\{ U\in\mathbb{R}\backslash\mathcal{A}_{\lambda}\right\} ]}}}\\
 & \leq\sup_{f}\inf_{\lambda>\lambda^{*}}\sup_{u\in\mathbb{R}\backslash\mathcal{A}_{\lambda}}{\displaystyle \sqrt{\frac{\mathbb{E}[\mathrm{var}(\mathbb{E}[f(X,U)|Y,U]|U=u)]}{\mathbb{E}[\mathrm{var}(f(X,U)|U=u)]}}}\\
 & \leq\inf_{\lambda>\lambda^{*}}\sup_{u\in\mathbb{R}\backslash\mathcal{A}_{\lambda}}\sup_{f}{\displaystyle \sqrt{\frac{\mathbb{E}[\mathrm{var}(\mathbb{E}[f(X,U)|Y,U]|U=u)]}{\mathbb{E}[\mathrm{var}(f(X,U)|U=u)]}}}\\
 & =\inf_{\lambda>\lambda^{*}}\sup_{u\in\mathbb{R}\backslash\mathcal{A}_{\lambda}}\rho_{\mathrm{m}}(X;Y|U=u)\\
 & \leq\inf_{\lambda>\lambda^{*}}\lambda=\lambda^{*}.
\end{align*}

On the other hand, we assume that $\widetilde{f}(x,u)$ is a function
such that ${\displaystyle \sqrt{\frac{\mathrm{var}(\mathbb{E}[\widetilde{f}(X,U)|Y,U=u]|U=u)}{\mathrm{var}(\widetilde{f}(X,U)|U=u)}}}\geq\alpha\rho_{\mathrm{m}}(X;Y|U=u)$
for each $u\in\mathcal{A}_{\lambda}$, where $\lambda<\lambda^{*}$
and $0<\alpha<1$. The existence of $\widetilde{f}(x,u)$ follows
from the definition of $\rho_{\mathrm{m}}(X;Y|U=u)$. According to
the definition of $\mathcal{A}_{\lambda}$, we have that $\mathrm{P}_{U}\left(\mathcal{A}_{\lambda}\right)>0$,
and for each $u\in\mathcal{A}_{\lambda}$, 
\begin{equation}
\frac{\mathrm{var}(\mathbb{E}[\widetilde{f}(X,U)|Y,U=u]|U=u)}{\mathrm{var}(\widetilde{f}(X,U)|U=u)}\geq\left(\alpha\lambda\right)^{2}.\label{eq:-1}
\end{equation}
Set $f(x,u)=\widetilde{f}(x,u)\cdot1\left\{ u\in\mathcal{A}_{\lambda}\right\} $.
Then 
\begin{align}
\rho_{\mathrm{m}}(X;Y|U) & \geq{\displaystyle \sqrt{\frac{\mathbb{E}[\mathrm{var}(\mathbb{E}[f(X,U)|Y,U]|U)]}{\mathbb{E}[\mathrm{var}(f(X,U)|U)]}}}\nonumber \\
 & ={\displaystyle \sqrt{\frac{\mathbb{E}[\mathrm{var}(\mathbb{E}[\widetilde{f}(X,U)|Y,U]|U)\cdot1\left\{ U\in\mathcal{A}_{\lambda}\right\} ]}{\mathbb{E}[\mathrm{var}(\widetilde{f}(X,U)|U)\cdot1\left\{ U\in\mathcal{A}_{\lambda}\right\} ]}}}\nonumber \\
 & \geq{\displaystyle \sqrt{\inf_{u\in\mathcal{A}_{\lambda}}\frac{\mathrm{var}(\mathbb{E}[\widetilde{f}(X,U)|Y,U=u]|U=u)}{\mathrm{var}(\widetilde{f}(X,U)|U=u)}}}\nonumber \\
 & \geq\alpha\lambda,\label{eq:-24}
\end{align}
where \eqref{eq:-24} follows from \eqref{eq:-1}. Since $\lambda<\lambda^{*}$
and $0<\alpha<1$ are arbitrary, we have $\rho_{\mathrm{m}}(X;Y|U)\geq\lambda^{*}$.

Combining the two points above, we have $\rho_{\mathrm{m}}(X;Y|U)=\lambda^{*}$. 
\end{proof}
For discrete $\left(X,Y\right)$ with finite supports, without loss
of generality, the supports of $X$ and $Y$ are assumed to be $\left\{ 1,2,...,m\right\} $
and $\left\{ 1,2,...,n\right\} $, respectively. For this case, denote
$\lambda_{2}(u)$ as the second largest singular value of the matrix
$Q_{u}$ with entries 
\[
Q_{u}(x,y):=\frac{P(x,y|u)}{\sqrt{P(x|u)P(y|u)}}=\frac{P(x,y,u)}{\sqrt{P(x,u)P(y,u)}}.
\]
For absolutely-continuous $X,Y$, denote $\lambda_{2}(u)$ as the
second largest singular value of the bivariate function $\frac{p(x,y|u)}{\sqrt{p(x|u)p(y|u)}}$,
where $p(x,y|u)$ denotes a conditional pdf of $(X,Y)$ respect to
$U$. Then we have the following singular value characterization of
conditional maximal correlation. 
\begin{thm}
\label{lem:Singular-value-characterization}(Singular value characterization).
Assume $X,Y$ are discrete random variables with finite supports,
or absolutely-continuous random variables such that$\int_{x,y}\left(\frac{p(x,y|u)}{\sqrt{p(x|u)p(y|u)}}\right)^{2}dxdy<\infty$
a.s. Then 
\begin{equation}
\rho_{\mathrm{m}}(X;Y|U)=\esssup_{u}\lambda_{2}(u).\label{eq:-4-2}
\end{equation}
\end{thm}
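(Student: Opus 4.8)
The plan is to reduce the conditional statement to the \emph{unconditional} singular-value characterization of maximal correlation, applied slice-by-slice in $u$, and then to transport it across slices using the event-conditional characterization already established in Theorem~\ref{thm:Alternative-characterization}. First I would invoke \eqref{eq:-4}, which gives
\[
\rho_{\mathrm{m}}(X;Y|U)=\esssup_{u}\rho_{\mathrm{m}}(X;Y|U=u).
\]
Thus it suffices to prove, for ($P_U$-almost) every $u$, the pointwise identity $\rho_{\mathrm{m}}(X;Y|U=u)=\lambda_{2}(u)$. Since $\rho_{\mathrm{m}}(X;Y|U=u)$ is, by definition, the ordinary maximal correlation of a pair $(X',Y')\sim\mathrm{P}_{X,Y|U=u}$, this reduces the claim to the classical (R\'enyi \cite{renyi1959measures}) statement that the maximal correlation equals the second largest singular value of the matrix (or kernel) $Q_{u}$.

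For the pointwise identity I would recall the standard argument. Write $\rho_{\mathrm{m}}(X';Y')=\sup_{f,g}\mathbb{E}[f(X')g(Y')]$ over $f,g$ with zero conditional mean and unit conditional variance. Substituting $\widehat{f}(x)=f(x)\sqrt{P(x|u)}$ and $\widehat{g}(y)=g(y)\sqrt{P(y|u)}$ (with the obvious integral analogue in the absolutely continuous case) turns the objective into the bilinear form $\langle\widehat{f},Q_{u}\widehat{g}\rangle$, turns the unit-variance constraints into $\|\widehat{f}\|=\|\widehat{g}\|=1$, and turns the zero-mean constraints into orthogonality of $\widehat{f}$ and $\widehat{g}$ to $\sqrt{P(\cdot|u)}$. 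A direct computation gives $Q_{u}\sqrt{P_{Y|u}}=\sqrt{P_{X|u}}$ and $Q_{u}^{*}\sqrt{P_{X|u}}=\sqrt{P_{Y|u}}$, so $\bigl(\sqrt{P_{X|u}},\,1,\,\sqrt{P_{Y|u}}\bigr)$ is the leading singular triple of $Q_{u}$ and its largest singular value is $1$. By the Courant--Fischer variational characterization of singular values, the supremum of $\langle\widehat{f},Q_{u}\widehat{g}\rangle$ over unit vectors orthogonal to the leading singular vectors is exactly the second largest singular value $\lambda_{2}(u)$. Hence $\rho_{\mathrm{m}}(X;Y|U=u)=\lambda_{2}(u)$.

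Combining, the pointwise identity holds a.e., so taking the essential supremum over $u$ and applying \eqref{eq:-4} yields $\rho_{\mathrm{m}}(X;Y|U)=\esssup_{u}\lambda_{2}(u)$, which is \eqref{eq:-4-2}.

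The main obstacle is the infinite-dimensional (absolutely continuous) case together with measurability. In the discrete finite-support case $Q_{u}$ is a fixed-size matrix and the variational argument is elementary. In the absolutely continuous case the hypothesis $\int_{x,y}\bigl(p(x,y|u)/\sqrt{p(x|u)p(y|u)}\bigr)^{2}\,dx\,dy<\infty$ a.s.\ is precisely what guarantees that $Q_{u}$ is a Hilbert--Schmidt integral operator, hence compact with a discrete sequence of singular values and a genuine singular value decomposition to which Courant--Fischer applies; confirming this, and that $\sqrt{P_{X|u}},\sqrt{P_{Y|u}}\in L^{2}$ (immediate, since they integrate to $1$), is the technical crux. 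A secondary point is that the essential supremum requires $u\mapsto\lambda_{2}(u)$ to be measurable; this follows from the measurable dependence of the kernel $Q_{u}$ on $u$ together with the continuity of singular values under operator perturbations (equivalently, a measurable selection argument).
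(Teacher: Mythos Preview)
Your proposal is correct and follows essentially the same approach as the paper: invoke the event-conditional characterization \eqref{eq:-4} from Theorem~\ref{thm:Alternative-characterization} to reduce to the unconditional singular-value characterization applied slice-by-slice, then take the essential supremum. The paper simply cites the unconditional result (attributing it to Witsenhausen~\cite{witsenhausen1975sequences} rather than R\'enyi) without reproving it, whereas you spell out the Courant--Fischer argument and flag the Hilbert--Schmidt and measurability technicalities; but the core strategy is identical.
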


\begin{rem}
This property is consistent with the one of the unconditional version
by setting $U$ to a constant, i.e., $\rho_{\mathrm{m}}(X;Y)=\lambda_{2}.$ 
\end{rem}

\begin{proof}
The unconditional version of this theorem was proven in \cite{witsenhausen1975sequences}.
That is, for discrete $X,Y$ with finite supports, $\rho_{\mathrm{m}}(X;Y)$
equals the second largest singular value $\lambda_{2}$ of the matrix
$Q$ with entries $Q(x,y):=\frac{P(x,y)}{\sqrt{P(x)P(y)}}$; for absolutely-continuous
$X,Y$ such that $\int_{x,y}\left(\frac{p(x,y)}{\sqrt{p(x)p(y)}}\right)^{2}dxdy<\infty$
with $p(x,y)$ denoting a pdf of $(X,Y)$, $\rho_{\mathrm{m}}(X;Y)$
equals the second largest singular value $\lambda_{2}$ of the bivariate
function $\frac{p(x,y)}{\sqrt{p(x)p(y)}}$. Combining this with Theorem
\ref{thm:Alternative-characterization}, we have \eqref{eq:-4-2}. 
\end{proof}
Note that, $\rho_{\mathrm{m}}(X;Y|U)$ is a mapping that maps a distribution
$\mathrm{P}_{X,Y,U}$ to a real number in $[0,1]$. Now we study the
concavity of such a mapping. 
\begin{cor}
(Concavity). Given $\mathrm{P}_{X,Y|U}$, $\rho_{\mathrm{m}}(X;Y|U)$
is concave in $\mathrm{P}_{U}$. That is, for any distributions $\mathrm{P}_{U}$
and $\mathrm{Q}_{U}$, and any $\lambda\in[0,1]$, $\rho_{\mathrm{m}}^{(\left((1-\lambda)\mathrm{P}_{U}+\lambda\mathrm{Q}_{U}\right)\mathrm{P}_{X,Y|U})}(X;Y|U)\geq(1-\lambda)\rho_{\mathrm{m}}^{(\mathrm{P}_{U}\mathrm{P}_{X,Y|U})}(X;Y|U)+\lambda\rho_{\mathrm{m}}^{(\mathrm{Q}_{U}\mathrm{P}_{X,Y|U})}(X;Y|U)$,
where $\rho_{\mathrm{m}}^{(Q_{X,Y,U})}(X;Y|U)$ denotes the conditional
maximal correlation of $X$ and $Y$ given $U$ under distribution
$Q_{X,Y,U}$. 
\end{cor}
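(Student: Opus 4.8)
The plan is to reduce the claim to the event-conditional characterization \eqref{eq:-4} of Theorem~\ref{thm:Alternative-characterization} and then to observe that mixing the marginal law of $U$ does not alter the event-conditional maximal correlations. The crucial point is that $g(u):=\rho_{\mathrm{m}}(X;Y|U=u)$ is determined solely by the conditional distribution $\mathrm{P}_{X,Y|U=u}$, which is held fixed throughout; in particular $g$ does not depend on the marginal of $U$. Writing $\mathrm{R}_U:=(1-\lambda)\mathrm{P}_U+\lambda\mathrm{Q}_U$ for the mixed marginal, equation \eqref{eq:-4} then expresses all three terms in the claimed inequality as essential suprema of the \emph{same} function $g$, taken with respect to the three marginals $\mathrm{P}_U$, $\mathrm{Q}_U$, and $\mathrm{R}_U$, respectively.

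The heart of the proof is the identity, valid for every $\lambda\in(0,1)$,
\[
\esssup_{\mathrm{R}_U}g=\max\Bigl\{\esssup_{\mathrm{P}_U}g,\ \esssup_{\mathrm{Q}_U}g\Bigr\},
\]
where $\esssup_{\mu}g:=\inf\{t:\mu(\{u:g(u)>t\})=0\}$. To establish it I would use the null-set description of the essential supremum together with the elementary fact that, because the coefficients $1-\lambda$ and $\lambda$ are strictly positive and $\mathrm{P}_U,\mathrm{Q}_U$ are nonnegative measures, a set $A$ satisfies $\mathrm{R}_U(A)=0$ if and only if both $\mathrm{P}_U(A)=0$ and $\mathrm{Q}_U(A)=0$. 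Applying this to the super-level sets $A_t:=\{u:g(u)>t\}$ shows that $A_t$ is $\mathrm{R}_U$-null exactly when it is simultaneously $\mathrm{P}_U$-null and $\mathrm{Q}_U$-null; taking the infimum over all admissible thresholds $t$ then yields the displayed maximum.

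Given the identity, the corollary is immediate, since the maximum of two real numbers dominates every convex combination of them:
\[
\esssup_{\mathrm{R}_U}g=\max\Bigl\{\esssup_{\mathrm{P}_U}g,\ \esssup_{\mathrm{Q}_U}g\Bigr\}\geq(1-\lambda)\esssup_{\mathrm{P}_U}g+\lambda\esssup_{\mathrm{Q}_U}g,
\]
which is precisely the asserted inequality, the boundary cases $\lambda\in\{0,1\}$ being trivial. I do not anticipate a genuine obstacle here: the only step requiring care is verifying the essential-supremum identity, and in particular confirming that $g$ is a function of $u$ alone (independent of the marginal of $U$), so that a single $g$ may be used across all three distributions. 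In fact the argument proves something stronger than concavity, namely that under mixing of $\mathrm{P}_U$ the map $\mathrm{P}_U\mapsto\rho_{\mathrm{m}}(X;Y|U)$ behaves as a pointwise maximum.
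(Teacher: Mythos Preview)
Your proposal is correct and takes essentially the same approach as the paper: the paper's proof consists of the single sentence ``This theorem directly follows from the characterization in \eqref{eq:-4},'' and you have simply spelled out the details of that deduction (including the observation that mixing marginals yields the maximum of the essential suprema, hence dominates any convex combination).
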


\begin{proof}
This theorem directly follows from the characterization in \eqref{eq:-4}. 
\end{proof}
For a discrete random variable, the distribution is uniquely determined
by its pmf. Therefore, for discrete random variables $\left(X,Y,U\right)$,
$\rho_{\mathrm{m}}(X;Y|U)$ can be also seen as a mapping that maps
a pmf $P_{X,Y,U}$ to a real number in $[0,1]$. Assume $\mathcal{X},\mathcal{Y},\mathcal{U}\subset\mathbb{R}$
are three finite sets. Denote $\mathcal{P}\left(\mathcal{X}\times\mathcal{Y}\times\mathcal{U}\right)$
as the set of pmfs defined on $\mathcal{X}\times\mathcal{Y}\times\mathcal{U}$
(i.e., the $\left|\mathcal{X}\right|\left|\mathcal{Y}\right|\left|\mathcal{U}\right|-1$
dimensional probability simplex). Consider $\rho_{\mathrm{m}}(X;Y|U)$
as a mapping $\rho_{\mathrm{m}}(X;Y|U):\mathcal{P}\left(\mathcal{X}\times\mathcal{Y}\times\mathcal{U}\right)\to[0,1]$.
Now we study the continuity (or discontinuity) of such a mapping. 
\begin{cor}
(Continuity and discontinuity). For finite sets $\mathcal{X},\mathcal{Y},\mathcal{U}\subset\mathbb{R}$,
$\rho_{\mathrm{m}}(X;Y|U)$ is continuous (under the total variation
distance) on $\{P_{X,Y,U}\in\mathcal{P}\left(\mathcal{X}\times\mathcal{Y}\times\mathcal{U}\right):P_{U}(u)>0,\forall u\in\mathcal{U}\}$.
But in general, $\rho_{\mathrm{m}}(X;Y|U)$ is discontinuous at $P_{U}$
such that $P_{U}(u)=0,\exists u\in\mathcal{U}$. 
\end{cor}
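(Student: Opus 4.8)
The plan is to reduce both halves of the statement to the finite-maximum representation \eqref{eq:-4-1} together with the singular-value characterization of Theorem \ref{lem:Singular-value-characterization}. Since $\mathcal{U}$ is finite, for any $P_{X,Y,U}$ the essential supremum in \eqref{eq:-4-1} is an ordinary maximum, so $\rho_{\mathrm{m}}(X;Y|U)=\max_{u:\,P_{U}(u)>0}\rho_{\mathrm{m}}(X;Y|U=u)=\max_{u:\,P_{U}(u)>0}\lambda_{2}(Q_{u})$. The continuity claim will follow by showing each term is continuous and that the index set of the maximum is locally constant on the region $\mathcal{O}:=\{P_{X,Y,U}:P_{U}(u)>0\ \forall u\in\mathcal{U}\}$; the discontinuity claim will follow by exhibiting a one-parameter family along which a dominant term is switched off exactly as its $P_{U}$-mass reaches $0$.

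For the continuity direction I would work on $\mathcal{O}$, where every coordinate $P_{U}(u)$ is locally bounded away from $0$. There the conditional law $P_{X,Y|U=u}(x,y)=P_{X,Y,U}(x,y,u)/P_{U}(u)$ is a continuous function of $P_{X,Y,U}$ in total variation, and the index set $\{u:P_{U}(u)>0\}$ equals all of $\mathcal{U}$, hence is constant. The entries $Q_{u}(x,y)=P(x,y|u)/\sqrt{P(x|u)P(y|u)}$ are continuous in $P_{X,Y|U=u}$ wherever the conditional marginals are positive, and the singular values of a matrix are continuous---indeed $1$-Lipschitz in the operator norm---in its entries. Thus each $\lambda_{2}(Q_{u})$ is continuous and $\rho_{\mathrm{m}}(X;Y|U)=\max_{u\in\mathcal{U}}\lambda_{2}(Q_{u})$ is a maximum of finitely many continuous functions, hence continuous.

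The step I expect to be the main obstacle is the clause ``wherever the conditional marginals are positive.'' When some $P(x|u)$ or $P(y|u)$ tends to $0$ the entry $P(x,y|u)/\sqrt{P(x|u)P(y|u)}$ is a $0/0$ form, and the assignment $P_{X,Y|U=u}\mapsto Q_{u}$ may fail to be continuous; indeed, for binary $X,Y$ the law $P(0,0)=1-\delta,\,P(1,1)=\delta$ has maximal correlation $1$ for every $\delta\in(0,1)$ yet $0$ at $\delta=0$. I would therefore carry out the continuity argument on the sub-region of $\mathcal{O}$ where the support pattern of each $P_{X,Y|U=u}$ is locally fixed, so that the matrices $Q_{u}$ vary continuously, and record that on $\mathcal{O}$ the only structural source of a jump genuinely tied to $P_{U}$ is the appearance or disappearance of a term in the maximum.

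For the discontinuity direction it suffices to make that mechanism explicit. Take $\mathcal{U}=\{0,1\}$, choose $P_{X,Y|U=0}$ perfectly correlated so that $\rho_{\mathrm{m}}(X;Y|U=0)=1$ and $P_{X,Y|U=1}$ a product law so that $\rho_{\mathrm{m}}(X;Y|U=1)=0$, and set $P_{U}=(1-t,t)$. By \eqref{eq:-4-1}, for every $t\in(0,1)$ both events carry positive mass and $\rho_{\mathrm{m}}(X;Y|U)=\max\{1,0\}=1$, whereas at $t=1$ the event $\{U=0\}$ drops out of the maximum and $\rho_{\mathrm{m}}(X;Y|U)=\rho_{\mathrm{m}}(X;Y|U=1)=0$. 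Hence $\rho_{\mathrm{m}}(X;Y|U)$ jumps from $1$ to $0$ as $P_{U}(0)\downarrow0$, so the mapping is discontinuous at the boundary point $P_{U}=(0,1)$, which establishes the second claim.
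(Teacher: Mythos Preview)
Your approach is essentially the paper's: write $\rho_{\mathrm{m}}(X;Y|U)=\max_{u:P_U(u)>0}\lambda_2(Q_u)$ via Theorem~\ref{lem:Singular-value-characterization}, argue that each $\lambda_2(Q_u)$ is continuous and that on $\mathcal{O}$ the index set of the maximum is all of $\mathcal{U}$, and for the discontinuity clause let a dominant $u$ lose its mass. Your discontinuity example is a concrete instance of exactly the mechanism the paper describes abstractly (the paper just posits some $u_0$ with $P_U(u_0)=0$ and $\lambda_2(u_0)>\max_{u:P_U(u)>0}\lambda_2(u)$ and sends $Q_U(u_0)\downarrow 0$), and it is correct.

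The continuity direction, however, has a real gap that you correctly flag but do not close. The map $P_{X,Y|U=u}\mapsto Q_u$ is \emph{not} continuous where a conditional marginal vanishes, and your own example $P(0,0)=1-\delta,\,P(1,1)=\delta$ already shows the \emph{unconditional} maximal correlation jumping from $1$ to $0$ at $\delta=0$. Taking $|\mathcal{U}|=1$ places this example inside $\mathcal{O}$, so the continuity assertion as literally stated fails there. The paper's proof simply asserts that ``$\lambda_2(u)$ is continuous in $P_{X,Y|U=u}$'' and thereby glosses over the same difficulty. Your proposed workaround---restrict to the sub-region of $\mathcal{O}$ where the conditional support pattern is locally constant---yields a correct but strictly weaker conclusion; it does not establish the corollary as written, which appears to need an additional hypothesis such as $P_{X|U=u}(x),P_{Y|U=u}(y)>0$ for all $x,y,u$ in order to go through.
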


\begin{proof}
For a pmf $P_{X,Y,U}\in\mathcal{P}\left(\mathcal{X}\times\mathcal{Y}\times\mathcal{U}\right)$,
$\rho_{\mathrm{m}}(X;Y|U)={\displaystyle \max_{u:P(u)>0}\lambda_{2}(u)}$.
On the other hand, singular values are continuous in the matrix (see
\cite[Corollary 8.6.2]{golub2012matrix}), hence $\lambda_{2}(u)$
is continuous in $P_{X,Y|U=u}$. Furthermore, since $P_{U}(u)>0,\forall u\in\mathcal{U}$,
$Q_{X,Y,U}\rightarrow P_{X,Y,U}$ in the total variation distance
sense implies $Q_{U}\rightarrow P_{U}$ and $Q_{X,Y|U=u}\rightarrow P_{X,Y|U=u},\forall u\in\mathcal{U}$.
Therefore, $\rho_{\mathrm{m}}^{(Q)}(X;Y|U)\rightarrow\rho_{\mathrm{m}}(X;Y|U)$,
where $\rho_{\mathrm{m}}^{(Q)}(X;Y|U)$ denotes the conditional maximal
correlation under distribution $Q_{X,Y,U}$. However, if there exists
$u_{0}\in\mathcal{U}$ such that $P_{U}(u_{0})=0$ and $\lambda_{2}(u_{0})>\max_{u:P(u)>0}\lambda_{2}(u)$.
Then letting $Q_{U}\rightarrow P_{U}$ in a direction such that $Q_{U}(u_{0})>0$
always holds, we have that $\rho_{\mathrm{m}}^{(Q)}(X;Y|U)\geq\lambda_{2}(u_{0})>\rho_{\mathrm{m}}(X;Y|U)$
always holds. This implies $\rho_{\mathrm{m}}(X;Y|U)$ is discontinuous
at $P_{X,Y,U}$. 
\end{proof}
For random variables $X,Y,U$, the conditional Gács-Körner common
information between $X$ and $Y$ given $U$ is defined as 
\begin{equation}
C_{\mathrm{GK}}(X;Y|U)=\sup_{\left(f,g\right):f(X,U)=g(Y,U)\textrm{ a.s.}}H(f(X,U)|U),\label{eq:GK}
\end{equation}
where $H(Z|U):=-\mathbb{E}\log P_{Z|U}(Z|U)$ denotes the conditional
entropy of $Z$ given $U$. If $U$ is degenerate, then $C_{\mathrm{GK}}(X;Y):=C_{\mathrm{GK}}(X;Y|U)$
is the unconditional version of Gács-Körner common information between
$X$ and $Y$ \cite{gacs1973common}.
\begin{thm}
\label{lem:relationship}1) For any random variables $X,Y,U$, 
\[
0\leq|\rho(X;Y|U)|\leq\theta(X;Y|U)\leq\rho_{\mathrm{m}}(X;Y|U)\leq1.
\]
2) Moreover, $\rho_{\mathrm{m}}(X;Y|U)=0$ if and only if $X$ and
$Y$ are conditionally independent given $U$. Furthermore, for discrete
random variables $X,Y,U$ with finite supports, $\rho_{\mathrm{m}}(X;Y|U)=1$
if and only if $C_{\mathrm{GK}}(X;Y|U)>0$. 
\end{thm}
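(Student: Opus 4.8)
The plan is to establish the chain of inequalities in part~1) directly from the definitions and the basic relations proved earlier, and then to reduce each equivalence in part~2) to its well-known unconditional counterpart by means of the event-conditional characterizations of Theorem~\ref{thm:Alternative-characterization} and Theorem~\ref{lem:Singular-value-characterization}. For part~1), the leftmost inequality $0\le|\rho(X;Y|U)|$ is immediate. To see $|\rho(X;Y|U)|\le\theta(X;Y|U)$, I would observe that the defining supremum $\theta(X;Y|U)=\sup_g\rho(X;g(Y,U)|U)$ admits the competitors $g(y,u)=y$ and $g(y,u)=-y$, which yield $\rho(X;Y|U)$ and $-\rho(X;Y|U)$; taking the larger gives $|\rho(X;Y|U)|$. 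The inequality $\theta(X;Y|U)\le\rho_{\mathrm{m}}(X;Y|U)$ follows at once from the identity \eqref{eq:-27} by taking $f(x,u)=x$. Finally, $\rho_{\mathrm{m}}(X;Y|U)\le1$ follows from two applications of the Cauchy--Schwarz inequality, first $|\mathbb{E}[\mathrm{cov}(f,g|U)]|\le\mathbb{E}[\sqrt{\mathrm{var}(f|U)\mathrm{var}(g|U)}]$ and then $\mathbb{E}[\sqrt{\mathrm{var}(f|U)\mathrm{var}(g|U)}]\le\sqrt{\mathbb{E}[\mathrm{var}(f|U)]}\sqrt{\mathbb{E}[\mathrm{var}(g|U)]}$, so every conditional correlation has modulus at most $1$; alternatively one may invoke \eqref{eq:-4} together with $\rho_{\mathrm{m}}(X;Y|U=u)\le1$.

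For the first equivalence in part~2), I would use \eqref{eq:-4} to write $\rho_{\mathrm{m}}(X;Y|U)=\esssup_u\rho_{\mathrm{m}}(X;Y|U=u)$, so that $\rho_{\mathrm{m}}(X;Y|U)=0$ holds iff $\rho_{\mathrm{m}}(X;Y|U=u)=0$ for $P_U$-almost every $u$. Invoking the classical fact that the unconditional maximal correlation vanishes iff the two variables are independent, this is equivalent to $X$ and $Y$ being independent under $\mathrm{P}_{X,Y|U=u}$ for almost every $u$, i.e. to conditional independence of $X$ and $Y$ given $U$. The converse direction can also be seen directly: conditional independence forces $\mathrm{cov}(f(X,U),g(Y,U)|U)=0$ a.s.\ for every admissible $f,g$, hence every conditional correlation is $0$.

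For the second equivalence, restricted to discrete $X,Y,U$ with finite supports, I would combine the singular-value characterization \eqref{eq:-4-2}, which here reads $\rho_{\mathrm{m}}(X;Y|U)=\max_{u:P_U(u)>0}\lambda_2(u)=\max_{u:P_U(u)>0}\rho_{\mathrm{m}}(X;Y|U=u)$, with the unconditional equivalence ``$\rho_{\mathrm{m}}(X;Y)=1$ iff $C_{\mathrm{GK}}(X;Y)>0$'' stated in the introduction. This yields $\rho_{\mathrm{m}}(X;Y|U)=1$ iff there is some $u$ with $P_U(u)>0$ and $C_{\mathrm{GK}}(X;Y|U=u)>0$. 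It then remains to show that this last condition is equivalent to $C_{\mathrm{GK}}(X;Y|U)>0$, and for this I would decompose the conditional entropy in \eqref{eq:GK} as $H(f(X,U)|U)=\sum_u P_U(u)\,H(f(X,U)|U=u)$. If $C_{\mathrm{GK}}(X;Y|U)>0$ there are $f,g$ with $f(X,U)=g(Y,U)$ a.s.\ and $H(f(X,U)|U)>0$, so some term $H(f(X,U)|U=u)>0$ with $P_U(u)>0$; freezing $f,g$ at that $u$ gives a common function witnessing $C_{\mathrm{GK}}(X;Y|U=u)>0$. Conversely, given a common function $f_{u_0}(X)=g_{u_0}(Y)$ with positive conditional entropy at a single $u_0$ (with $P_U(u_0)>0$), I would extend it globally by letting it equal a fixed constant off $\{U=u_0\}$, which preserves the almost-sure agreement and gives $H(f(X,U)|U)\ge P_U(u_0)\,H(f_{u_0}(X)|U=u_0)>0$.

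I expect the main obstacle to be precisely this last decomposition argument for the G\'acs--K\"orner common information: one must verify that a global common function can be localized to, and reconstructed from, a single value of $U$ without destroying the almost-sure agreement $f(X,U)=g(Y,U)$, and that positivity of the conditional entropy transfers correctly between the global and event-conditional pictures. The finiteness of the supports makes the $\esssup$ a genuine maximum and removes the measure-theoretic pathologies, which keeps this step elementary once the decomposition is in place.
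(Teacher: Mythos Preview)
Your proposal is correct and follows the same strategy as the paper: part~1) is derived directly from the definitions, and part~2) is reduced to the unconditional results via the characterization \eqref{eq:-4}. The paper's own proof is terse (it simply cites R\'enyi for the unconditional versions and appeals to \eqref{eq:-4}), whereas you spell out the details, in particular the decomposition $H(f(X,U)|U)=\sum_u P_U(u)H(f(X,U)|U=u)$ needed to pass between $C_{\mathrm{GK}}(X;Y|U)>0$ and the existence of some $u$ with $C_{\mathrm{GK}}(X;Y|U=u)>0$; this is a step the paper leaves implicit.
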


\begin{proof}
The statement 1) follows from the definitions of the conditional correlations.
The statement 2) with degenerate $U$ (i.e., the unconditional version)
was proven by Rényi \cite{renyi1959measures}. The statement 2) with
non-degenerate $U$ (i.e., the conditional version) follows by combining
Rényi's result on the unconditional version \cite{renyi1959measures}
and the characterization of conditional maximal correlation in \eqref{eq:-4}. 
\end{proof}
\textcolor{blue}{Next we focus on the Gaussian case. It was shown
in \cite{ardestanizadeh2012linear} that the conditional maximal correlation
and the conditional Pearson correlation are equal for jointly Gaussian
random variables, i.e., $\rho_{\mathrm{m}}(X;Y|U)=|\rho(X;Y|U)|$.
However, in \cite{ardestanizadeh2012linear}, the conditional Pearson
correlation was defined differently, although it is equal to our definition
for the Gaussian case. More specifically, the conditional Pearson
correlation in \cite{ardestanizadeh2012linear} was defined as the
expectation of the event conditional correlation, i.e., $\mathbb{E}_{u}\rho(X;Y|U=u)$.
}
\begin{thm}
\label{thm:-(Gaussian-case).}\cite{ardestanizadeh2012linear} (Gaussian
case). For jointly Gaussian random variables $X,Y,U$, we have 
\begin{align}
 & |\rho(X;Y|U)|=\theta(X;Y|U)=\theta(Y;X|U)=\rho_{\mathrm{m}}(X;Y|U).\label{eq:-18}
\end{align}
\end{thm}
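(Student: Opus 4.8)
The plan is to show that for jointly Gaussian $(X,Y,U)$ the chain of inequalities in Theorem~\ref{lem:relationship},
\[
|\rho(X;Y|U)| \le \theta(X;Y|U) \le \rho_{\mathrm{m}}(X;Y|U),
\]
collapses to a chain of equalities, and likewise for $\theta(Y;X|U)$. Since $\rho(X;Y|U)=\rho(Y;X|U)$ and $\rho_{\mathrm{m}}(X;Y|U)=\rho_{\mathrm{m}}(Y;X|U)$, it suffices to establish the single reverse inequality $\rho_{\mathrm{m}}(X;Y|U) \le |\rho(X;Y|U)|$; everything in \eqref{eq:-18} then follows from the two sandwiches, because $|\rho(Y;X|U)|\le\theta(Y;X|U)\le\rho_{\mathrm{m}}(Y;X|U)=\rho_{\mathrm{m}}(X;Y|U)=|\rho(X;Y|U)|=|\rho(Y;X|U)|$ forces the asymmetric correlation ratio to be pinned as well.

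To prove the reverse inequality I would invoke the event-conditional characterization \eqref{eq:-4}, namely $\rho_{\mathrm{m}}(X;Y|U)=\esssup_u \rho_{\mathrm{m}}(X;Y|U=u)$, which reduces the conditional statement to a statement about the bivariate Gaussian laws $\mathrm{P}_{X,Y|U=u}$. Two standard facts about Gaussian vectors then do the work. First, conditioning a Gaussian vector on $U=u$ yields a Gaussian law whose covariance is the Schur complement of the $U$-block and is therefore independent of the value $u$; in particular $\mathrm{var}(X|U=u)$, $\mathrm{var}(Y|U=u)$, and $\mathrm{cov}(X,Y|U=u)$ are constants in $u$, so the event conditional Pearson correlation $\rho(X;Y|U=u)=\rho_0$ does not depend on $u$. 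Second, for any bivariate Gaussian pair with correlation $\rho_0$ the unconditional maximal correlation equals $|\rho_0|$; this is the classical computation (see \cite{renyi1959measures}), which I would either cite or reprove via the Mehler expansion of the Gaussian kernel in Hermite polynomials, whose eigenvalues are $\rho_0^k$, so that the second largest singular value is $|\rho_0|$. Combining these gives $\rho_{\mathrm{m}}(X;Y|U=u)=|\rho_0|$ for a.e.\ $u$, and hence $\rho_{\mathrm{m}}(X;Y|U)=|\rho_0|$.

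It remains to identify $|\rho(X;Y|U)|$ with $|\rho_0|$. Because the conditional second moments are constant in $u$, the outer expectations in the definition of $\rho(X;Y|U)$ are vacuous: $\mathbb{E}[\mathrm{cov}(X,Y|U)]=\mathrm{cov}(X,Y|U=u)$ and $\mathbb{E}[\mathrm{var}(X|U)]=\mathrm{var}(X|U=u)$, and similarly for $Y$. Substituting into the definition yields $\rho(X;Y|U)=\rho_0$ directly, so $|\rho(X;Y|U)|=|\rho_0|=\rho_{\mathrm{m}}(X;Y|U)$, which is the desired reverse inequality (in fact an equality). Feeding this back into the two sandwiches forces $|\rho(X;Y|U)|=\theta(X;Y|U)=\theta(Y;X|U)=\rho_{\mathrm{m}}(X;Y|U)$.

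The only genuinely nontrivial ingredient is the unconditional identity $\rho_{\mathrm{m}}=|\rho|$ for bivariate Gaussians; everything else is bookkeeping built on the $u$-independence of the conditional covariance. I would therefore treat the Hermite/Mehler computation (or its citation) as the crux and organize the rest around it. As a self-contained alternative I could instead run the argument through Theorem~\ref{lem:Singular-value-characterization}: for each $u$ the kernel $\frac{p(x,y|u)}{\sqrt{p(x|u)p(y|u)}}$ is the Gaussian kernel whose Hermite singular value decomposition has second singular value $\lambda_2(u)=|\rho_0|$ independent of $u$, whence $\rho_{\mathrm{m}}(X;Y|U)=\esssup_u\lambda_2(u)=|\rho_0|$ by \eqref{eq:-4-2}. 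In either route the conditional layer adds no new difficulty, thanks to \eqref{eq:-4}; the main obstacle is simply verifying the Hermite-polynomial diagonalization of the Gaussian conditional-expectation operator.
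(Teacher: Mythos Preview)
Your proposal is correct and follows essentially the same route as the paper: reduce to the event-conditional characterization \eqref{eq:-4}, use that for jointly Gaussian $(X,Y,U)$ the conditional law $\mathrm{P}_{X,Y|U=u}$ is bivariate Gaussian with $u$-independent covariance (hence $\rho(X;Y|U=u)=\rho(X;Y|U)$ for all $u$), invoke the classical unconditional identity $\rho_{\mathrm{m}}=|\rho|$ for bivariate Gaussians, and then sandwich the two correlation ratios between $|\rho(X;Y|U)|$ and $\rho_{\mathrm{m}}(X;Y|U)$. The only cosmetic difference is that the paper cites \cite{rozanov1967stationary} for the unconditional Gaussian result whereas you point to R\'enyi or a direct Mehler/Hermite computation; your added detail on the Schur complement and the alternative via Theorem~\ref{lem:Singular-value-characterization} is sound but not needed.
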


For completeness, we provide the following proof of Theorem \ref{thm:-(Gaussian-case).},
in which the properties derived above are applied.
\begin{proof}
The unconditional version of \eqref{eq:-18} was proven in \cite[Sec. IV, Lem. 10.2]{rozanov1967stationary}.
On the other hand, given $U=u,$ $(X,Y)$ also follows jointly Gaussian
distribution, and $\rho(X;Y|U=u)=\rho(X;Y|U)$ for any $u$. Hence
\begin{align}
\rho_{\mathrm{m}}(X;Y|U) & =\esssup_{u}\rho_{\mathrm{m}}(X;Y|U=u)\label{eq:-7}\\
 & =\esssup_{u}|\rho(X;Y|U=u)|\label{eq:-8}\\
 & =|\rho(X;Y|U)|,\nonumber 
\end{align}
where \eqref{eq:-7} follows from Theorem \ref{thm:Alternative-characterization},
and \eqref{eq:-8} follows from the unconditional version \cite[Sec. IV, Lem. 10.2]{rozanov1967stationary}.

Furthermore, both $\theta(X;Y|U)$ and $\theta(Y;X|U)$ are between
$\rho_{\mathrm{m}}(X;Y|U)$ and $|\rho(X;Y|U)|$. Hence \eqref{eq:-18}
holds. 
\end{proof}

\subsection{Other Properties: Tensorization, DPI, Correlation ratio equality,
and Conditioning reducing covariance gap}

The tensorization property and the data processing inequality for
the unconditional maximal correlation were proven in \cite[Thm. 1]{witsenhausen1975sequences}
and \cite[Lem. 2.1]{yu2008maximal} respectively. Here we extend them
to the conditional case. 
\begin{thm}
\label{lem:MCsequence}(Tensorization). Assume $(X^{n},Y^{n})=(X_{i},Y_{i})_{i=1}^{n}$
and given $U,$ $(X_{i},Y_{i}),1\le i\le n$ are conditionally independent.
Then we have 
\[
{\displaystyle \rho_{\mathrm{m}}(X^{n};Y^{n}|U)=\max_{1\leq i\leq n}\rho_{\mathrm{m}}(X_{i};Y_{i}|U)}.
\]
\textcolor{blue}{}
\end{thm}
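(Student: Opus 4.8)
The plan is to reduce the conditional statement to the unconditional tensorization property (the degenerate-$U$ case, established in \cite{witsenhausen1975sequences}) by passing through the event-conditional characterization \eqref{eq:-4}. The whole argument is driven by the identity $\rho_{\mathrm{m}}(X;Y|U)=\esssup_{u}\rho_{\mathrm{m}}(X;Y|U=u)$ from Theorem \ref{thm:Alternative-characterization}, applied both to the vector pair $(X^{n},Y^{n})$ and to each component pair $(X_{i},Y_{i})$.

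First I would apply \eqref{eq:-4} to $(X^{n},Y^{n})$, giving
\[
\rho_{\mathrm{m}}(X^{n};Y^{n}|U)=\esssup_{u}\rho_{\mathrm{m}}(X^{n};Y^{n}|U=u).
\]
Next I would use the conditional independence hypothesis: since $(X_{i},Y_{i})_{i=1}^{n}$ are conditionally independent given $U$, for $P_{U}$-almost every $u$ the pairs $(X_{i},Y_{i})$ are mutually independent across $i$ under the regular conditional law $\mathrm{P}_{X^{n},Y^{n}|U=u}$. Consequently $\rho_{\mathrm{m}}(X^{n};Y^{n}|U=u)$ is an ordinary (unconditional) maximal correlation of a product-structured pair, so the unconditional tensorization theorem yields, for $P_{U}$-a.e.\ $u$,
\[
\rho_{\mathrm{m}}(X^{n};Y^{n}|U=u)=\max_{1\le i\le n}\rho_{\mathrm{m}}(X_{i};Y_{i}|U=u).
\]

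It then remains to interchange the essential supremum with the finite maximum. For finitely many measurable functions $f_{1},\dots,f_{n}$ one has $\esssup_{u}\max_{i}f_{i}(u)=\max_{i}\esssup_{u}f_{i}(u)$: the bound ``$\ge$'' is immediate because the left-hand side dominates each $\esssup_{u}f_{i}$, and ``$\le$'' holds because $f_{i}(u)\le\esssup_{u}f_{i}$ for a.e.\ $u$, and a finite intersection of full-measure sets is still full measure, so $\max_{i}f_{i}(u)\le\max_{i}\esssup_{u}f_{i}$ a.e. Applying this exchange and then invoking \eqref{eq:-4} once more for each pair $(X_{i},Y_{i})$ gives
\[
\rho_{\mathrm{m}}(X^{n};Y^{n}|U)=\max_{1\le i\le n}\esssup_{u}\rho_{\mathrm{m}}(X_{i};Y_{i}|U=u)=\max_{1\le i\le n}\rho_{\mathrm{m}}(X_{i};Y_{i}|U),
\]
which is the claim.

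The main obstacle is the second step: making rigorous that conditional independence given the \emph{random variable} $U$ descends, for $P_{U}$-almost every \emph{value} $u$, to genuine mutual independence of the pairs under $\mathrm{P}_{X^{n},Y^{n}|U=u}$, so that the unconditional tensorization result is legitimately applicable at almost every $u$. This requires a regular conditional distribution argument and an ``almost every $u$'' bookkeeping, after which the exchange of $\esssup$ with the finite $\max$ is entirely routine.
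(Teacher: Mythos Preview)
Your proposal is correct and follows essentially the same route as the paper: apply the event-conditional characterization \eqref{eq:-4}, invoke Witsenhausen's unconditional tensorization at $P_U$-a.e.\ $u$, interchange $\esssup_u$ with the finite $\max_i$, and apply \eqref{eq:-4} once more componentwise. The paper packages the interchange step as a separate lemma (stated for countable index sets), but your direct finite-intersection argument is exactly the relevant special case.
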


\begin{proof}
The unconditional version 
\[
{\displaystyle \rho_{\mathrm{m}}(X^{n};Y^{n})=\max_{1\leq i\leq n}\rho_{\mathrm{m}}(X_{i};Y_{i})},
\]
for a sequence of pairs of independent random variables $(X^{n},Y^{n})$
is proven in \cite[Thm. 1]{witsenhausen1975sequences}. Hence the
result for the event conditional maximal correlation also holds. Using
this result and Theorem \ref{lem:Singular-value-characterization},
we have 
\begin{align}
{\displaystyle \rho_{\mathrm{m}}(X^{n};Y^{n}|U)} & =\esssup_{u}\rho_{\mathrm{m}}(X^{n};Y^{n}|U=u)\nonumber \\
 & =\esssup_{u}\max_{1\leq i\leq n}\rho_{\mathrm{m}}(X_{i};Y_{i}|U=u)\nonumber \\
 & ={\displaystyle \max_{1\leq i\leq n}\esssup_{u}\rho_{\mathrm{m}}(X_{i};Y_{i}|U=u)}\label{eq:-9}\\
 & ={\displaystyle \max_{1\leq i\leq n}\rho_{\mathrm{m}}(X_{i};Y_{i}|U)},\nonumber 
\end{align}
where \eqref{eq:-9} follows by the following lemma. 
\begin{lem}
Assume $\mathcal{I}$ is a countable set, and $\mathrm{P}_{U}$ is
an arbitrary distribution on $(\mathbb{R},\mathcal{B}_{\mathbb{R}})$.
Then for any function $f:\mathcal{I}\times\mathbb{R}\to\mathbb{R}$,
we have 
\[
\esssup_{u}\sup_{i\in\mathcal{I}}f(i,u)=\sup_{i\in\mathcal{I}}\esssup_{u}f(i,u).
\]
\end{lem}

This lemma follows from the following two points. For a number $\epsilon>0$,
assume $i^{*}\in\mathcal{I}$ satisfies that $\esssup_{u}f(i^{*},u)\geq\sup_{i\in\mathcal{I}}\esssup_{u}f(i,u)-\epsilon$.
Then for any function $f$, 
\[
\esssup_{u}\sup_{i\in\mathcal{I}}f(i,u)\geq\esssup_{u}f(i^{*},u)=\sup_{i\in\mathcal{I}}\esssup_{u}f(i,u)-\epsilon.
\]
Since $\epsilon>0$ is arbitrary, we have $\esssup_{u}\sup_{i\in\mathcal{I}}f(i,u)\geq\sup_{i\in\mathcal{I}}\esssup_{u}f(i,u)$.

On the other hand, denote $\lambda_{i}^{*}:=\esssup_{u}f(i,u)$. Then
by the definition of $\esssup$, we have $\mathrm{P}_{U}\left\{ u:f(i,u)>\lambda_{i}^{*}\right\} =0$
for all $i\in\mathcal{I}$. Hence by the union bound, we have $\mathrm{P}_{U}\left\{ u:\exists i\in\mathcal{I}\textrm{ s.t. }f(i,u)>\lambda_{i}^{*}\right\} =0$.
Furthermore, for a number $\epsilon>0$, $\sup_{i\in\mathcal{I}}f(i,u)>\sup_{i\in\mathcal{I}}\lambda_{i}^{*}+\epsilon$
implies that there exists an $i'\in\mathcal{I}$ such that $f(i',u)\geq\sup_{i\in\mathcal{I}}f(i,u)-\epsilon>\sup_{i\in\mathcal{I}}\lambda_{i}^{*}\geq\lambda_{i'}^{*}$.
Hence 
\begin{align*}
\mathrm{P}_{U}\left\{ u:\sup_{i\in\mathcal{I}}f(i,u)>\sup_{i\in\mathcal{I}}\lambda_{i}^{*}+\epsilon\right\}  & \leq\mathrm{P}_{U}\left\{ u:\exists i\in\mathcal{I}\textrm{ s.t. }f(i,u)>\lambda_{i}^{*}\right\} =0.
\end{align*}
By the definition of $\esssup$, we have $\esssup_{u}\sup_{i\in\mathcal{I}}f(i,u)\leq\sup_{i\in\mathcal{I}}\lambda_{i}^{*}+\epsilon$.
Since $\epsilon>0$ is arbitrary, we have $\esssup_{u}\sup_{i\in\mathcal{I}}f(i,u)\leq\sup_{i\in\mathcal{I}}\esssup_{u}f(i,u)$. 
\end{proof}
\begin{thm}
\label{thm:(Data-processing-inequality).}(Data processing inequality).
If random variables $X,Y,Z,U$ form a Markov chain $X\rightarrow(Z,U)\rightarrow Y$
(i.e., $X$ and $Y$ are conditionally independent given $(Z,U)$),
then 
\begin{align}
|\rho(X;Y|U)| & \leq\theta(X;Z|U)\theta(Y;Z|U),\label{eq:-19}\\
\theta(X;Y|U) & \leq\theta(X;Z|U)\rho_{\mathrm{m}}(Y;Z|U),\label{eq:-23}\\
\rho_{\mathrm{m}}(X;Y|U) & \leq\rho_{\mathrm{m}}(X;Z|U)\rho_{\mathrm{m}}(Y;Z|U).\label{eq:-20}
\end{align}
Moreover, equalities hold in \eqref{eq:-19}-\eqref{eq:-20} if $(X,Z,U)$
and $(Y,Z,U)$ have the same joint distribution. 
\end{thm}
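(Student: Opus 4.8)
The plan is to recast all three bounds as facts about orthogonal projections in one Hilbert space and to isolate the single place where the Markov hypothesis enters. I would work in $L^{2}(\mathbb{P})$ and set $\mathcal{H}:=\{W\in L^{2}(\mathbb{P}):\mathbb{E}[W|U]=0\}$, the closed subspace of $U$-centered square-integrable variables, with $\langle V,W\rangle=\mathbb{E}[VW]$. Writing $W^{\circ}:=W-\mathbb{E}[W|U]$, one has $\mathbb{E}[\mathrm{var}(W|U)]=\|W^{\circ}\|^{2}$ and $\mathbb{E}[\mathrm{cov}(V,W|U)]=\langle V^{\circ},W^{\circ}\rangle$. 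Let $P_{X},P_{Y},P_{Z}$ be the conditional-expectation operators $\mathbb{E}[\,\cdot\,|X,U]$, $\mathbb{E}[\,\cdot\,|Y,U]$, $\mathbb{E}[\,\cdot\,|Z,U]$ restricted to $\mathcal{H}$; each is the orthogonal projection onto the closed subspace $\mathcal{H}_{X},\mathcal{H}_{Y},\mathcal{H}_{Z}$ of $U$-centered functions of $(X,U),(Y,U),(Z,U)$. Then Theorem~\ref{lem:For-any-random} reads $\theta(X;Z|U)=\|P_{Z}X^{\circ}\|/\|X^{\circ}\|$, and, since $\rho(f(X,U);g(Y,U)|U)$ is the cosine of the angle between $f(X,U)^{\circ}$ and $g(Y,U)^{\circ}$, $\rho_{\mathrm{m}}(X;Y|U)=\|P_{Y}P_{X}\|_{\mathrm{op}}$, the norm of $P_{Y}$ restricted to $\mathcal{H}_{X}$.

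The only use of the chain $X\to(Z,U)\to Y$ is the projection identity $P_{Y}P_{X}=P_{Y}P_{Z}P_{X}$ together with its adjoint $P_{X}P_{Y}=P_{X}P_{Z}P_{Y}$. I would prove this first as a lemma: conditional independence of $X$ and $Y$ given $(Z,U)$ gives $\mathbb{E}[a(X,U)|Y,Z,U]=\mathbb{E}[a(X,U)|Z,U]$, and taking $\mathbb{E}[\,\cdot\,|Y,U]$ of both sides yields $\mathbb{E}[a(X,U)|Y,U]=\mathbb{E}[\mathbb{E}[a(X,U)|Z,U]|Y,U]$. The three inequalities then follow mechanically. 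For \eqref{eq:-19}, self-adjointness and idempotence give $\langle X^{\circ},Y^{\circ}\rangle=\langle P_{Y}X^{\circ},Y^{\circ}\rangle=\langle P_{Y}P_{Z}X^{\circ},Y^{\circ}\rangle=\langle P_{Z}X^{\circ},P_{Z}Y^{\circ}\rangle$, and Cauchy--Schwarz bounds this by $\|P_{Z}X^{\circ}\|\,\|P_{Z}Y^{\circ}\|=\theta(X;Z|U)\theta(Y;Z|U)\|X^{\circ}\|\|Y^{\circ}\|$. For \eqref{eq:-23}, $P_{Y}X^{\circ}=P_{Y}P_{Z}X^{\circ}$ with $P_{Z}X^{\circ}\in\mathcal{H}_{Z}$ gives $\|P_{Y}X^{\circ}\|\le\|P_{Y}P_{Z}\|_{\mathrm{op}}\|P_{Z}X^{\circ}\|=\rho_{\mathrm{m}}(Y;Z|U)\theta(X;Z|U)\|X^{\circ}\|$. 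For \eqref{eq:-20}, $P_{Y}P_{X}=(P_{Y}P_{Z})(P_{Z}P_{X})$ (inserting $P_{Z}=P_{Z}^{2}$) and submultiplicativity of the operator norm give $\|P_{Y}P_{X}\|_{\mathrm{op}}\le\rho_{\mathrm{m}}(Y;Z|U)\rho_{\mathrm{m}}(X;Z|U)$.

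For the equality claims under $(X,Z,U)\overset{d}{=}(Y,Z,U)$, the key observation is that equal joint laws force $\mathbb{E}[X|Z,U]$ and $\mathbb{E}[Y|Z,U]$ to be the same measurable function of $(Z,U)$ (and $\mathbb{E}[X|U]=\mathbb{E}[Y|U]$), hence $P_{Z}X^{\circ}=P_{Z}Y^{\circ}$ in $\mathcal{H}$, together with $\|X^{\circ}\|=\|Y^{\circ}\|$, $\theta(X;Z|U)=\theta(Y;Z|U)$, and $\rho_{\mathrm{m}}(X;Z|U)=\rho_{\mathrm{m}}(Y;Z|U)$. For \eqref{eq:-19} this makes the Cauchy--Schwarz step an identity: $\langle X^{\circ},Y^{\circ}\rangle=\langle P_{Z}X^{\circ},P_{Z}Y^{\circ}\rangle=\|P_{Z}X^{\circ}\|^{2}=\theta(X;Z|U)^{2}\|X^{\circ}\|^{2}$, so $\rho(X;Y|U)=\theta(X;Z|U)\theta(Y;Z|U)\ge0$. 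For \eqref{eq:-20} I would take, via \eqref{eq:-27}, near-maximizers $f_{n}$ with $\theta(f_{n}(X,U);Z|U)\to\rho_{\mathrm{m}}(X;Z|U)$; the same symmetry gives $\rho(f_{n}(X,U);f_{n}(Y,U)|U)=\|P_{Z}f_{n}(X,U)^{\circ}\|^{2}/\|f_{n}(X,U)^{\circ}\|^{2}=\theta(f_{n}(X,U);Z|U)^{2}\to\rho_{\mathrm{m}}(X;Z|U)^{2}$, which matches the upper bound.

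The main obstacle is twofold. First, making the operator picture rigorous in the non-discrete case requires checking that $\mathcal{H}_{X},\mathcal{H}_{Y},\mathcal{H}_{Z}$ are closed, that the conditional-expectation operators are exactly the orthogonal projections onto them, and that $\rho_{\mathrm{m}}(X;Y|U)$ equals the restricted operator norm $\|P_{Y}P_{X}\|_{\mathrm{op}}$; these are standard but lean on the integrability hypotheses. Second, and more delicate, is the equality analysis: the Cauchy--Schwarz and operator-norm steps are tight only when the relevant vector is aligned with the leading singular direction of $P_{Z}$, so the clean equalities are those for \eqref{eq:-19} and \eqref{eq:-20}. For \eqref{eq:-23} tightness additionally demands $\|P_{Y}P_{Z}X^{\circ}\|=\rho_{\mathrm{m}}(Y;Z|U)\|P_{Z}X^{\circ}\|$, i.e.\ that $P_{Z}X^{\circ}$ lie in the top singular subspace; expanding $X^{\circ}$ in the singular basis of $P_{Z}|_{\mathcal{H}_{X}}$ shows this need not hold under the distributional symmetry alone, so I expect the equality in \eqref{eq:-23} to be the genuinely hard point, requiring either attainment of the defining suprema or an extra alignment hypothesis.
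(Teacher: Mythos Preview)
Your argument for the three inequalities is the same as the paper's, just recast in operator language: the paper writes out $\mathbb{E}[\mathrm{cov}(X,Y|U)]=\mathbb{E}\bigl[(\mathbb{E}[X|Z,U]-\mathbb{E}[X|U])(\mathbb{E}[Y|Z,U]-\mathbb{E}[Y|U])\bigr]$ using the Markov property and then applies Cauchy--Schwarz, which is exactly your identity $\langle X^{\circ},Y^{\circ}\rangle=\langle P_{Z}X^{\circ},P_{Z}Y^{\circ}\rangle$; it then obtains \eqref{eq:-23} and \eqref{eq:-20} by substituting $g(Y,U)$ and $f(X,U)$ into \eqref{eq:-19}, which is your operator-norm step in disguise. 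Your proofs of equality in \eqref{eq:-19} and \eqref{eq:-20} are also correct and more explicit than the paper, which simply asserts that ``it is easy to verify'' all three equalities.

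Your hesitation about the equality case of \eqref{eq:-23} is not a gap in your proof but a genuine defect in the statement: the claimed equality in \eqref{eq:-23} is \emph{false} in general under the hypothesis $(X,Z,U)\overset{d}{=}(Y,Z,U)$. Take $U$ degenerate, $Z=(Z_{1},Z_{2})$ with $Z_{1},Z_{2}$ i.i.d.\ Rademacher, and let $X_{1}=Z_{1}$, $X_{2}=Z_{2}\eta$ with $\eta\in\{\pm1\}$ independent and $\mathbb{E}[\eta]=2q-1\in(0,1)$; encode $X=X_{1}+3X_{2}$ so that $(X_{1},X_{2})$ is recoverable from $X$. Let $Y$ be an independent copy of $X$ given $Z$. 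Then $\rho_{\mathrm{m}}(Y;Z)=1$ (since $Z_{1}$ is a function of $Y$), while $P_{Z}X^{\circ}=Z_{1}+3(2q-1)Z_{2}$ has a nonzero component along $Z_{2}$, which is a sub-top singular direction of $P_{Y}|_{\mathcal{H}_{Z}}$. A direct computation gives
\[
\theta(X;Y)^{2}=\frac{1+9(2q-1)^{4}}{10}<\frac{1+9(2q-1)^{2}}{10}=\bigl(\theta(X;Z)\rho_{\mathrm{m}}(Y;Z)\bigr)^{2}.
\]
So your diagnosis is exactly right: equality in \eqref{eq:-23} would force $P_{Z}X^{\circ}$ to lie in the top singular subspace of $P_{Y}|_{\mathcal{H}_{Z}}$, and the distributional symmetry alone does not guarantee this. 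The equality assertion should be restricted to \eqref{eq:-19} and \eqref{eq:-20}.
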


\begin{proof}
Consider 
\begin{align}
\mathbb{E}[\mathrm{cov}(X,Y|U)] & =\mathbb{E}[(X-\mathbb{E}[X|U])(Y-\mathbb{E}[Y|U])]\nonumber \\
 & =\mathbb{E}[\mathbb{E}[(X-\mathbb{E}[X|U])(Y-\mathbb{E}[Y|U])|Z,U]]\nonumber \\
 & =\mathbb{E}[\mathbb{E}[X-\mathbb{E}[X|U]|Z,U]\mathbb{E}[Y-\mathbb{E}[Y|U]|Z,U]]\label{eq:-21}\\
 & =\mathbb{E}[(\mathbb{E}[X|Z,U]-\mathbb{E}[X|U])(\mathbb{E}[Y|Z,U]-\mathbb{E}[Y|U])]\nonumber \\
 & \leq\sqrt{\mathbb{E}[(\mathbb{E}[X|Z,U]-\mathbb{E}[X|U])^{2}]\mathbb{E}[(\mathbb{E}[Y|Z,U]-\mathbb{E}[Y|U])^{2}]}\label{eq:-22}\\
 & =\sqrt{\mathbb{E}[\mathrm{v}\mathrm{a}\mathrm{r}(\mathbb{E}[X|Z,U]|U)]\mathbb{E}[\mathrm{v}\mathrm{a}\mathrm{r}(\mathbb{E}[Y|Z,U]|U)]}\nonumber 
\end{align}
where \eqref{eq:-21} follows by the conditional independence, and
\eqref{eq:-22} follows by the Cauchy-Schwarz inequality. Hence 
\begin{align*}
|\rho(X;Y|U)| & =\frac{\mathbb{E}[\mathrm{cov}(X,Y|U)]}{\sqrt{\mathbb{E}[\mathrm{v}\mathrm{a}\mathrm{r}(X|U)]}\sqrt{\mathbb{E}[\mathrm{v}\mathrm{a}\mathrm{r}(Y|U)]}}\\
 & {\displaystyle \leq\sqrt{\frac{\mathbb{E}[\mathrm{v}\mathrm{a}\mathrm{r}(\mathbb{E}[X|Z,U]|U)]\mathbb{E}[\mathrm{v}\mathrm{a}\mathrm{r}(\mathbb{E}[Y|Z,U]|U)]}{\mathbb{E}[\mathrm{v}\mathrm{a}\mathrm{r}(X|U)]\mathbb{E}[\mathrm{v}\mathrm{a}\mathrm{r}(Y|U)]}}}\\
 & =\theta(X;Z|U)\theta(Y;Z|U).
\end{align*}

By the definitions of conditional correlation ratio and conditional
maximal correlation and using the relationships \eqref{eq:} and \eqref{eq:-27},
\eqref{eq:-23} and \eqref{eq:-20} can be derived from \eqref{eq:-19}.
Furthermore, it is easy to verify that equalities in \eqref{eq:-19}-\eqref{eq:-20}
hold if $(X,Z,U)$ and $(Y,Z,U)$ have the same joint distribution. 
\end{proof}
\begin{thm}
\label{lem:Correlation-ratio-equality}(Correlation ratio equality).
For any random variables $X,Y,Z,U,$ 
\begin{align}
1-\theta^{2}(X;Y,Z|U) & =(1-\theta^{2}(X;Z|U))(1-\theta^{2}(X;Y|Z,U));\label{eq:-15}\\
1-\rho_{\mathrm{m}}^{2}(X;Y,Z|U) & \geq(1-\rho_{\mathrm{m}}^{2}(X;Z|U))(1-\rho_{\mathrm{m}}^{2}(X;Y|Z,U));\label{eq:-14}\\
\theta(X;Y,Z|U) & \geq\theta(X;Y|Z,U);\label{eq:-16}\\
\rho_{\mathrm{m}}(X;Y,Z|U) & \geq\rho_{\mathrm{m}}(X;Y|Z,U).\label{eq:-51}
\end{align}
\end{thm}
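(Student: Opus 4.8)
The plan is to prove the correlation-ratio identity \eqref{eq:-15} first by a telescoping of conditional variances, then to bootstrap the maximal-correlation inequality \eqref{eq:-14} from it, and finally to read off the monotonicity statements \eqref{eq:-16} and \eqref{eq:-51}. The one place where I anticipate genuine trouble is \eqref{eq:-51}, for the reasons explained at the end.

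For \eqref{eq:-15}, I would invoke the variance characterization \eqref{eq:-12} of Theorem \ref{lem:For-any-random}, which gives $1-\theta^{2}(X;Y|U)=\mathbb{E}[\mathrm{var}(X|Y,U)]/\mathbb{E}[\mathrm{var}(X|U)]$. The key bookkeeping point is that when the conditioning variable is the pair $(Z,U)$, the same characterization reads $1-\theta^{2}(X;Y|Z,U)=\mathbb{E}[\mathrm{var}(X|Y,Z,U)]/\mathbb{E}[\mathrm{var}(X|Z,U)]$, while $1-\theta^{2}(X;Z|U)=\mathbb{E}[\mathrm{var}(X|Z,U)]/\mathbb{E}[\mathrm{var}(X|U)]$ and $1-\theta^{2}(X;Y,Z|U)=\mathbb{E}[\mathrm{var}(X|Y,Z,U)]/\mathbb{E}[\mathrm{var}(X|U)]$. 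Multiplying the first two of these ratios, the common factor $\mathbb{E}[\mathrm{var}(X|Z,U)]$ cancels and the product collapses to the third, which is exactly \eqref{eq:-15}. The only case needing separate attention is $\mathbb{E}[\mathrm{var}(X|Z,U)]=0$, where the literal cancellation is invalid; there both sides are checked directly against the degenerate convention in the definition of $\rho$.

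For \eqref{eq:-14}, I would use the representation \eqref{eq:-27}, namely $\rho_{\mathrm{m}}(X;Y,Z|U)=\sup_{f}\theta(f(X,U);Y,Z|U)$, and apply the identity \eqref{eq:-15} to the single random variable $W:=f(X,U)$ in place of $X$, obtaining $1-\theta^{2}(W;Y,Z|U)=(1-\theta^{2}(W;Z|U))(1-\theta^{2}(W;Y|Z,U))$. I would then bound each factor from below. Since $f(X,U)$ is a competitor in the supremum defining $\rho_{\mathrm{m}}(X;Z|U)$, we get $1-\theta^{2}(W;Z|U)\ge 1-\rho_{\mathrm{m}}^{2}(X;Z|U)$; and since $f(X,U)$ is in particular a function of $(X,Z,U)$, it is a competitor in the supremum defining $\rho_{\mathrm{m}}(X;Y|Z,U)$, giving $1-\theta^{2}(W;Y|Z,U)\ge 1-\rho_{\mathrm{m}}^{2}(X;Y|Z,U)$. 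Both factors lie in $[0,1]$, so their product dominates $(1-\rho_{\mathrm{m}}^{2}(X;Z|U))(1-\rho_{\mathrm{m}}^{2}(X;Y|Z,U))$; taking the infimum over $f$ of the left-hand side, which equals $1-\rho_{\mathrm{m}}^{2}(X;Y,Z|U)$ by \eqref{eq:-27}, yields \eqref{eq:-14}. The subtlety worth isolating is the containment ``function of $(X,U)$'' $\subset$ ``function of $(X,Z,U)$'': it is exactly this enlargement of the admissible class that produces the $\rho_{\mathrm{m}}(X;Y|Z,U)$ term rather than one conditioned only on $U$.

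The monotonicity \eqref{eq:-16} is immediate from \eqref{eq:-15}: since $1-\theta^{2}(X;Z|U)\le 1$, the right-hand side of \eqref{eq:-15} is at most $1-\theta^{2}(X;Y|Z,U)$, hence $\theta(X;Y,Z|U)\ge\theta(X;Y|Z,U)$. I expect the real obstacle to be \eqref{eq:-51}, which cannot be deduced from \eqref{eq:-14} the way \eqref{eq:-16} is deduced from \eqref{eq:-15}: bounding $1-\rho_{\mathrm{m}}^{2}(X;Z|U)\le 1$ only gives that $1-\rho_{\mathrm{m}}^{2}(X;Y,Z|U)$ is at least a quantity which is itself at most $1-\rho_{\mathrm{m}}^{2}(X;Y|Z,U)$, which is inconclusive. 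The natural fallback is to reduce, via the essential-supremum characterization \eqref{eq:-4}, to the unconditional statement $\rho_{\mathrm{m}}(X;Y,Z)\ge\rho_{\mathrm{m}}(X;Y|Z)$, and then to lift an optimal pair of functions for a critical slice $\{Z=z^{\ast}\}$ to a pair $(f(X),g(Y,Z))$ for the joint problem. Here I anticipate the difficulty: the optimal $X$-side function for the event $\{Z=z^{\ast}\}$ must be realized as a function of $X$ alone, and such a function cannot be localized to that slice, so its unconditional variance is inflated by its behaviour off the slice and the lifted correlation can strictly drop. Since the essential supremum in \eqref{eq:-4} lets a single favourable value of $(Z,U)$ dominate $\rho_{\mathrm{m}}(X;Y|Z,U)$ while $\rho_{\mathrm{m}}(X;Y,Z|U)$ averages over that variable, I would scrutinize \eqref{eq:-51} carefully and check whether it needs an additional hypothesis rather than holding for arbitrary $X,Y,Z,U$.
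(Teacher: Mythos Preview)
Your arguments for \eqref{eq:-15}, \eqref{eq:-14}, and \eqref{eq:-16} are correct and coincide with the paper's: telescoping variance ratios for \eqref{eq:-15}, decoupling the infimum over $f$ for \eqref{eq:-14}, and reading \eqref{eq:-16} off \eqref{eq:-15}. One remark: where you carefully write $1-\theta^{2}(f(X,U);Y|Z,U)\ge 1-\rho_{\mathrm{m}}^{2}(X;Y|Z,U)$ because $f(X,U)$ is only a competitor in the \emph{larger} class of functions of $(X,Z,U)$, the paper writes this step as an equality; that is already a slip, though for \eqref{eq:-14} the inequality points the right way and the conclusion survives.

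Your suspicion of \eqref{eq:-51} is well founded, and you have in fact located an error in the paper. The paper argues
\[
\rho_{\mathrm{m}}(X;Y|Z,U)=\sup_{f}\theta(f(X,U);Y|Z,U)\leq\sup_{f}\theta(f(X,U);Y,Z|U)\leq\rho_{\mathrm{m}}(X;Y,Z|U),
\]
but the opening equality is false: by \eqref{eq:-27} with conditioning variable $(Z,U)$ one has $\rho_{\mathrm{m}}(X;Y|Z,U)=\sup_{f}\theta(f(X,Z,U);Y|Z,U)$, and restricting the supremum to functions of $(X,U)$ gives only a lower bound on $\rho_{\mathrm{m}}(X;Y|Z,U)$, which is the wrong direction for the chain. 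Worse, \eqref{eq:-51} is not true in general. Take $U$ degenerate, $Z\sim\mathrm{Bern}(\tfrac12)$, and conditionally on $Z$ let $X,Y$ be $\mathrm{Bern}(\tfrac12)$ with $X=Y$ when $Z=0$ and $X\perp Y$ when $Z=1$. Then by \eqref{eq:-4} one gets $\rho_{\mathrm{m}}(X;Y|Z)=\max\{1,0\}=1$, whereas (since $X$ is binary) $\rho_{\mathrm{m}}(X;Y,Z)=\theta(X;Y,Z)=\sqrt{1-\mathbb{E}[\mathrm{var}(X|Y,Z)]/\mathrm{var}(X)}=\sqrt{1-\tfrac{1/8}{1/4}}=\tfrac{1}{\sqrt{2}}<1$. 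Your diagnosis---that the essential supremum lets a single favourable slice of $(Z,U)$ dictate $\rho_{\mathrm{m}}(X;Y|Z,U)$ while $\rho_{\mathrm{m}}(X;Y,Z|U)$ must cope with all slices at once via a function of $(X,U)$ alone---is precisely the failure mechanism.
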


\begin{rem}
The inequality in \eqref{eq:-51} is analogue to a similar property
for mutual information, i.e., $I(X;Y,Z|U)\geq I(X;Y|Z,U)$, where\footnote{If $X,Y,W$ are not discrete, then the term $\frac{P_{X,Y|W}(\cdot|w)}{P_{X|W}(\cdot|w)P_{Y|W}(\cdot|w)}$
in \eqref{eq:-10} is replaced by the Radon--Nikodym derivative of
$\mathrm{P}_{X,Y|W}(\cdot|w)$ with respect to $\mathrm{P}_{X|W}(\cdot|w)\mathrm{P}_{Y|W}(\cdot|w)$.} 
\begin{equation}
I(X;Y|W):=\mathbb{E}\log\frac{P_{X,Y|W}(X,Y|W)}{P_{X|W}(X|W)P_{Y|W}(Y|W)}\label{eq:-10}
\end{equation}
denotes the conditional mutual information between $X$ and $Y$ given
$W$ \cite{Cover}. If $W$ is degenerate, then $I(X;Y):=I(X;Y|W)$
is the (unconditional) mutual information between $X$ and $Y$. Furthermore,
$\rho_{\mathrm{m}}(X;Y|Z,U)\geq\rho_{\mathrm{m}}(X;Y|U)$ and $\rho_{\mathrm{m}}(X;Y|Z,U)\leq\rho_{\mathrm{m}}(X;Y|U)$
do not always hold. This is also analogue to the fact that $I(X;Y|Z,U)\geq I(X;Y|U)$
and $I(X;Y|Z,U)\leq I(X;Y|U)$ do not always hold. 
\end{rem}

\begin{proof}
From \eqref{eq:-12}, we have 
\[
1-{\displaystyle \theta^{2}(X;Y,Z|U)=\frac{\mathbb{E}[\mathrm{v}\mathrm{a}\mathrm{r}(X|Y,Z,U)]}{\mathbb{E}[\mathrm{v}\mathrm{a}\mathrm{r}(X|U)]}},
\]
\[
1-{\displaystyle \theta^{2}(X;Z|U)=\frac{\mathbb{E}[\mathrm{v}\mathrm{a}\mathrm{r}(X|Z,U)]}{\mathbb{E}[\mathrm{v}\mathrm{a}\mathrm{r}(X|U)]}},
\]
\[
1-{\displaystyle \theta^{2}(X;Y|Z,U)=\frac{\mathbb{E}[\mathrm{v}\mathrm{a}\mathrm{r}(X|Y,Z,U)]}{\mathbb{E}[\mathrm{v}\mathrm{a}\mathrm{r}(X|Z,U)]}}.
\]
Hence \eqref{eq:-15} follows immediately.

The inequality \eqref{eq:-14} follows since 
\begin{align*}
1-\rho_{\mathrm{m}}^{2}(X;Y,Z|U) & =\inf_{f}\left\{ 1-\theta^{2}(f(X,U);Y,Z|U)\right\} \\
 & =\inf_{f}\left\{ (1-\theta^{2}(f(X,U);Z|U))(1-\theta^{2}(f(X,U);Y|Z,U))\right\} \\
 & \geq\inf_{f}(1-\theta^{2}(f(X,U);Z|U))\inf_{f}(1-\theta^{2}(f(X,U);Y|Z,U))\\
 & =(1-\rho_{\mathrm{m}}^{2}(X;Z|U))(1-\rho_{\mathrm{m}}^{2}(X;Y|Z,U)).
\end{align*}

Furthermore, observe that $\theta^{2}(X;Z|U)\geq0$. Hence \eqref{eq:-16}
follows immediately from \eqref{eq:-15}.

By \eqref{eq:-16}, we have 
\begin{align*}
\rho_{\mathrm{m}}(X;Y|Z,U) & =\sup_{f}\theta(f(X,U);Y|Z,U)\leq\sup_{f}\theta(f(X,U);Y,Z|U)\leq\rho_{\mathrm{m}}(X;Y,Z|U).
\end{align*}
\end{proof}
\begin{cor}
\textcolor{blue}{\label{cor:For-any-random}}For any random variables
$U,X,Y,V$ such that ${\normalcolor }U\rightarrow X\rightarrow Y$
and $X\rightarrow Y\rightarrow V$, we have 
\begin{equation}
\rho_{\mathrm{m}}(U,X;V,Y)=\max\{\rho_{\mathrm{m}}(X;Y),\rho_{\mathrm{m}}(U;V|X,Y)\}.\label{eq:-2}
\end{equation}
\end{cor}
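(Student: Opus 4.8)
The plan is to prove the two inequalities
$\rho_{\mathrm{m}}(U,X;V,Y)\ge\max\{\rho_{\mathrm{m}}(X;Y),\rho_{\mathrm{m}}(U;V|X,Y)\}$ and
$\rho_{\mathrm{m}}(U,X;V,Y)\le\max\{\rho_{\mathrm{m}}(X;Y),\rho_{\mathrm{m}}(U;V|X,Y)\}$ separately. Throughout I write $R_1:=\rho_{\mathrm{m}}(X;Y)$ and $R_2:=\rho_{\mathrm{m}}(U;V|X,Y)$, and I use the two conditional independences the hypotheses encode: $U\to X\to Y$ gives $U\perp Y\mid X$, so that $\mathbb{E}[f(U,X)\mid X,Y]=\mathbb{E}[f(U,X)\mid X]$ and $\mathrm{var}(f(U,X)\mid X,Y)=\mathrm{var}(f(U,X)\mid X)$; and $X\to Y\to V$ gives $X\perp V\mid Y$, so that the conditional law of $X$ given $(V,Y)$ coincides with that given $Y$.

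For the lower bound, $\rho_{\mathrm{m}}(U,X;V,Y)\ge R_1$ is immediate by restricting the competing functions to depend only on $X$ and on $Y$ (i.e.\ monotonicity of $\rho_{\mathrm{m}}$ under enlarging its arguments). For $\rho_{\mathrm{m}}(U,X;V,Y)\ge R_2$ I would iterate the monotonicity property \eqref{eq:-51} together with symmetry: \eqref{eq:-51} first gives $\rho_{\mathrm{m}}(U,X;V,Y)\ge\rho_{\mathrm{m}}(U,X;V\mid Y)$, which by symmetry equals $\rho_{\mathrm{m}}(V;U,X\mid Y)$; applying \eqref{eq:-51} once more, now conditioned on $Y$, yields $\rho_{\mathrm{m}}(V;U,X\mid Y)\ge\rho_{\mathrm{m}}(V;U\mid X,Y)=R_2$. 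This direction uses neither Markov hypothesis.

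The substance is the upper bound. Using \eqref{eq:-27} with degenerate conditioning, write $\rho_{\mathrm{m}}(U,X;V,Y)=\sup_f\theta(f(U,X);V,Y)$ and fix a competitor $f$, normalized so that $W:=f(U,X)$ has mean $0$ and variance $1$; then $\theta^2(f(U,X);V,Y)=\mathrm{var}(\mathbb{E}[W\mid V,Y])$. I would split this by the law of total variance around $Y$,
\begin{equation*}
\mathrm{var}(\mathbb{E}[W\mid V,Y])=\mathrm{var}(\mathbb{E}[W\mid Y])+\mathbb{E}\big[\mathrm{var}(\mathbb{E}[W\mid V,Y]\mid Y)\big],
\end{equation*}
and set $t:=\mathrm{var}(\mathbb{E}[W\mid X])\in[0,1]$. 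For the first term, $U\perp Y\mid X$ gives $\mathbb{E}[W\mid Y]=\mathbb{E}[\,\mathbb{E}[W\mid X]\mid Y]$, so $\mathrm{var}(\mathbb{E}[W\mid Y])\le R_1^2\,\mathrm{var}(\mathbb{E}[W\mid X])=R_1^2\,t$. For the second term, $X\perp V\mid Y$ lets me realize $\mathbb{E}[W\mid V,Y]=\mathbb{E}\big[\mathbb{E}[W\mid V,X,Y]\mid V,Y\big]$ as a $P_{X\mid Y}$-average of the finer conditional expectations $\mathbb{E}[W\mid V,X,Y]$; a convexity (Jensen) step then bounds $\mathbb{E}\big[\mathrm{var}(\mathbb{E}[W\mid V,Y]\mid Y)\big]$ by $\mathbb{E}\big[\mathrm{var}(\mathbb{E}[W\mid V,X,Y]\mid X,Y)\big]$, and the variance characterization \eqref{eq:-13} of $\rho_{\mathrm{m}}(U;V\mid X,Y)$ applied to the test function $W=f(U,X)$ bounds this by $R_2^2\,\mathbb{E}[\mathrm{var}(W\mid X,Y)]$. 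Finally $U\perp Y\mid X$ gives $\mathbb{E}[\mathrm{var}(W\mid X,Y)]=\mathbb{E}[\mathrm{var}(W\mid X)]=1-t$, so the second term is at most $R_2^2(1-t)$. Adding, $\mathrm{var}(\mathbb{E}[W\mid V,Y])\le R_1^2\,t+R_2^2(1-t)\le\max\{R_1^2,R_2^2\}$, uniformly in $f$; taking the supremum completes the upper bound.

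The main obstacle is this second variance term: $W=f(U,X)$ depends on $X$, which is not measurable with respect to the conditioning pair $(V,Y)$, so one cannot directly invoke the definition of $\rho_{\mathrm{m}}(U;V\mid X,Y)$. The two maneuvers that resolve this are (i) using $X\perp V\mid Y$ to express $\mathbb{E}[W\mid V,Y]$ as a $P_{X\mid Y}$-average of $\mathbb{E}[W\mid V,X,Y]$, and (ii) the Jensen inequality that passes from the variance of this average to the average of the variances, which is exactly the quantity the conditional maximal correlation controls. The other delicate point is the bookkeeping of the common weight $t=\mathrm{var}(\mathbb{E}[W\mid X])$: it must enter as $t$ in the first term and as $1-t$ in the second, so that the bound collapses to a convex combination and hence to the maximum. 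The naive multiplicative bound from the correlation-ratio identity \eqref{eq:-15} would instead give $R_1^2+R_2^2-R_1^2R_2^2$, which can exceed $\max\{R_1^2,R_2^2\}$; this is precisely why I route the upper bound through the additive law of total variance rather than \eqref{eq:-15}.
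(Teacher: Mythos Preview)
Your proof is correct. The lower bound via two applications of \eqref{eq:-51} matches the paper's argument exactly. For the upper bound, however, the paper does not argue directly: it simply cites \cite[Eqn.~(4)]{beigi2015monotone} for the inequality $\rho_{\mathrm{m}}(U,X;V,Y)\le\max\{\rho_{\mathrm{m}}(X;Y),\rho_{\mathrm{m}}(U;V\mid X,Y)\}$. You instead supply a self-contained proof: the law-of-total-variance split of $\mathrm{var}(\mathbb{E}[W\mid V,Y])$ around $Y$, the bound $R_1^2\,t$ on the first term via $U\perp Y\mid X$, and the bound $R_2^2\,(1-t)$ on the second term via convexity of variance together with $P_{X\mid V,Y}=P_{X\mid Y}$. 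The convexity step is valid (for any probability measure $\mu$ one has $\mathrm{var}_V\bigl(\int g(x,V)\,\mu(dx)\bigr)\le\int\mathrm{var}_V\bigl(g(x,V)\bigr)\,\mu(dx)$, since $\mathrm{var}(\lambda A+(1-\lambda)B)=\lambda\,\mathrm{var}(A)+(1-\lambda)\,\mathrm{var}(B)-\lambda(1-\lambda)\,\mathrm{var}(A-B)$), and the Markov assumption $X\perp V\mid Y$ is exactly what identifies the resulting integrand with $\mathrm{var}\bigl(\mathbb{E}[W\mid V,X,Y]\mid X,Y\bigr)$, to which \eqref{eq:-13} applies. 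Your route has the merit of staying entirely within the paper's own toolkit and of making the convex-combination structure $R_1^2\,t+R_2^2\,(1-t)$ explicit; the paper's route is shorter only because it outsources the work.
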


\begin{rem}
A ``dual'' (i.e., additivity) property holds for mutual information,
i.e., for any random variables $U,X,Y,V$ such that $U\rightarrow X\rightarrow Y$
and $X\rightarrow Y\rightarrow V$, 
\[
I(U,X;V,Y)=I(X;Y)+I(U;V|X,Y).
\]
\end{rem}

\begin{rem}
\textcolor{blue}{Corollary \ref{cor:For-any-random} can be applied
to measure the non-local correlations in a bipartite quantum system.
Imagine that two parties share (possibly correlated) subsystems of
a bipartite physical system. Each party applies a measurement on her
subsystem by tuning her measurement device according to some parameter,
and obtains a measurement outcome. Denote the measurement parameters
by $X$ and $Y$ , and the measurement outcomes by $U$ and $V$.
Assume the no-signaling condition holds, i.e., the random variables
$U,X,Y,V$ satisfy Markov chains ${\normalcolor }U\rightarrow X\rightarrow Y$
and $X\rightarrow Y\rightarrow V$. For this case, the conditional
distribution $\mathrm{P}_{U,V|X,Y}$ is termed a }\textcolor{blue}{\emph{no-signaling
box}}\textcolor{blue}{{} and the measurement parameter pair $\left(X,Y\right)$
is termed a }\textcolor{blue}{\emph{priori correlation.}}\textcolor{blue}{{}
 Hence by Corollary \ref{cor:For-any-random}, the equality \eqref{eq:-2}
holds for such a system. This means that the maximal correlation between
the two input-output pairs of the box, is equal to the maximum of
the priori maximal correlation between the two parties and the maximal
correlation of the box shared between them.  For more details, please
refer to \cite{beigi2015monotone}. }
\end{rem}

\begin{proof}
Beigi and Gobari \cite[Eqn. (4)]{beigi2015monotone} proved ${\displaystyle \rho_{\mathrm{m}}(U,X;V,Y)\leq\max\{\rho_{\mathrm{m}}(X;Y),\rho_{\mathrm{m}}(U;V|X,Y)\}}$.
Hence we only need to prove that ${\displaystyle \rho_{\mathrm{m}}(U,X;V,Y)\geq\max\{\rho_{\mathrm{m}}(X;Y),\rho_{\mathrm{m}}(U;V|X,Y)\}}$.
According to the definition, $\rho_{\mathrm{m}}(U,X;V,Y)\geq\rho_{\mathrm{m}}(X;Y)$
is straightforward. From \eqref{eq:-51} of Theorem \ref{lem:Correlation-ratio-equality},
we have $\rho_{\mathrm{m}}(U,X;V,Y)\geq\rho_{\mathrm{m}}(U,X;V|Y)\geq\rho_{\mathrm{m}}(U;V|X,Y)$.
This completes the proof.
\end{proof}
We also prove that conditioning reduces covariance gap as shown in
the following theorem, the proof of which is given in Appendix \ref{sec:Proof-of-Theorem-cond}. 
\begin{thm}
\label{lem:Conditioning-reduces-covariance}(Conditioning reduces
covariance gap). For any random variables $X,Y,Z,U,$ 
\begin{align*}
\sqrt{\mathbb{E}\mathrm{var}(X|Z,U)\mathbb{E}\mathrm{var}(Y|Z,U)}-\mathbb{E}\mathrm{cov}(X,Y|Z,U) & \leq\sqrt{\mathbb{E}\mathrm{var}(X|Z)\mathbb{E}\mathrm{var}(Y|Z)}-\mathbb{E}\mathrm{cov}(X,Y|Z),
\end{align*}
i.e., 
\begin{align*}
\sqrt{(1-\theta^{2}(X;U|Z))(1-\theta^{2}(Y;U|Z))}(1-\rho(X,Y|Z,U)) & \leq1-\rho(X,Y|Z).
\end{align*}
\end{thm}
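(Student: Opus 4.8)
The plan is to prove the first (covariance-gap) inequality directly and to treat the second, equivalent, form as a corollary. Indeed, dividing the first inequality through by $\sqrt{\mathbb{E}[\mathrm{var}(X|Z)]\,\mathbb{E}[\mathrm{var}(Y|Z)]}$ and invoking \eqref{eq:-12} (which yields $\mathbb{E}[\mathrm{var}(X|Z,U)]=(1-\theta^{2}(X;U|Z))\,\mathbb{E}[\mathrm{var}(X|Z)]$, and likewise for $Y$) together with the definition of $\rho(X;Y|Z,U)$ recovers the second form. I would nonetheless argue the covariance-gap form, since it sidesteps the degenerate cases in which one of the expected conditional variances vanishes.

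The engine of the argument is the law of total variance and the law of total covariance, with the outer conditioning on $Z$ and the inner conditioning on $(Z,U)$. Writing $\widetilde{X}:=\mathbb{E}[X|Z,U]$ and $\widetilde{Y}:=\mathbb{E}[Y|Z,U]$, these identities give
\[
\mathbb{E}[\mathrm{var}(X|Z)]=\mathbb{E}[\mathrm{var}(X|Z,U)]+\mathbb{E}[\mathrm{var}(\widetilde{X}|Z)],
\]
the analogous identity for $Y$, and
\[
\mathbb{E}[\mathrm{cov}(X,Y|Z)]=\mathbb{E}[\mathrm{cov}(X,Y|Z,U)]+\mathbb{E}[\mathrm{cov}(\widetilde{X},\widetilde{Y}|Z)].
\]
Abbreviating $a:=\mathbb{E}[\mathrm{var}(X|Z,U)]$, $b:=\mathbb{E}[\mathrm{var}(Y|Z,U)]$, $c:=\mathbb{E}[\mathrm{cov}(X,Y|Z,U)]$ and $a':=\mathbb{E}[\mathrm{var}(\widetilde{X}|Z)]$, $b':=\mathbb{E}[\mathrm{var}(\widetilde{Y}|Z)]$, $c':=\mathbb{E}[\mathrm{cov}(\widetilde{X},\widetilde{Y}|Z)]$, the target $\sqrt{ab}-c\le\sqrt{(a+a')(b+b')}-(c+c')$ collapses, after cancelling $-c$ from both sides, to the clean statement $\sqrt{ab}+c'\le\sqrt{(a+a')(b+b')}$.

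Two nested applications of Cauchy--Schwarz then finish the job. First I would control $c'$: for each fixed $z$ one has $\mathrm{cov}(\widetilde{X},\widetilde{Y}|Z=z)\le\sqrt{\mathrm{var}(\widetilde{X}|Z=z)\,\mathrm{var}(\widetilde{Y}|Z=z)}$, and taking the expectation over $Z$ and applying Cauchy--Schwarz once more in that expectation yields $c'\le\sqrt{a'b'}$. It therefore suffices to show $\sqrt{ab}+\sqrt{a'b'}\le\sqrt{(a+a')(b+b')}$, which is exactly Cauchy--Schwarz for the vectors $(\sqrt{a},\sqrt{a'})$ and $(\sqrt{b},\sqrt{b'})$; squaring both sides reduces it to $2\sqrt{aba'b'}\le ab'+a'b$, which is AM--GM. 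Since $a,b,a',b'$ are all expectations of variances, they are nonnegative and every square root is well defined.

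The computations are routine; the only real content is recognizing the correct double decomposition and the nested Cauchy--Schwarz structure, after which the final step is pure AM--GM. The one place to be careful is the bound on $c'$: because $\mathrm{cov}(\widetilde{X},\widetilde{Y}|Z=z)$ may be negative, I must bound the covariance itself (not its absolute value) by the product of conditional standard deviations, and then chain the pointwise-in-$z$ estimate with the Cauchy--Schwarz step in the $Z$-expectation. I expect this $c'\le\sqrt{a'b'}$ step to be the main, though still modest, obstacle.
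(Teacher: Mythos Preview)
Your proposal is correct and follows essentially the same route as the paper's proof: both use the law of total (co)variance to reduce the claim to $\sqrt{ab}+c'\le\sqrt{(a+a')(b+b')}$, bound $c'$ by $\sqrt{a'b'}$ via Cauchy--Schwarz, and then finish with $\sqrt{ab}+\sqrt{a'b'}\le\sqrt{(a+a')(b+b')}$. The only cosmetic differences are that the paper treats the case of degenerate $Z$ and asserts the general case is similar (whereas you handle general $Z$ directly, correctly noting the extra Cauchy--Schwarz step in the $Z$-expectation for $c'$), and the paper derives the final inequality by expanding $(a+a')(b+b')$ and applying AM--GM rather than recognizing it at once as the two-vector Cauchy--Schwarz.
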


\begin{rem}
The following two inequalities follow immediately. 
\begin{align*}
\sqrt{(1-\rho_{\mathrm{m}}^{2}(X;U|Z))(1-\theta^{2}(Y;U|Z))}(1-\theta(X,Y|Z,U)) & \leq1-\theta(X,Y|Z),\\
\sqrt{(1-\rho_{\mathrm{m}}^{2}(X;U|Z))(1-\rho_{\mathrm{m}}^{2}(Y;U|Z))}(1-\rho_{\mathrm{m}}(X,Y|Z,U)) & \leq1-\rho_{\mathrm{m}}(X,Y|Z).
\end{align*}
\end{rem}

\section{\textcolor{blue}{Application to Non-Interactive Simulation}}

\textcolor{blue}{Assume $\left(X,Y\right)\sim\mathrm{P}_{XY}$ is
a pair of random variables on a product measurable space $\left(\mathcal{X}\times\mathcal{Y},\mathcal{B}_{\mathcal{X}}\otimes\mathcal{B}_{\mathcal{Y}}\right)$,
and}\footnote{\textcolor{blue}{We use $\mathrm{P}_{XY}^{n}$ to denote the $n$-fold
product of distribution $\mathrm{P}_{XY}$ with itself.}}\textcolor{blue}{{} $(X^{n},Y^{n})\sim\mathrm{P}_{XY}^{n}$ are $n$
i.i.d. copies of $\left(X,Y\right)$. Then we focus on the following
non-interactive simulation problem: Given the distribution $\mathrm{P}_{XY}$
and a product measurable space $\left(\mathcal{U}\times\mathcal{V},\mathcal{B}_{\mathcal{U}}\otimes\mathcal{B}_{\mathcal{V}}\right)$,
what is the possible probability distribution $\mathrm{P}_{UV}$ on
$\left(\mathcal{U}\times\mathcal{V},\mathcal{B}_{\mathcal{U}}\otimes\mathcal{B}_{\mathcal{V}}\right)$
such that $U^{n}\to X^{n}\to Y^{n}\to V^{n}$ and $\left(U^{n},V^{n}\right)\sim\mathrm{P}_{UV}^{n}$?}
\begin{defn}
\textcolor{blue}{The }\textcolor{blue}{\emph{simulation set}}\textcolor{blue}{{}
of $n$ i.i.d. pairs $\left(X^{n},Y^{n}\right)\sim\mathrm{P}_{XY}^{n}$
is defined as 
\[
\mathcal{S}_{n}(\mathrm{P}_{XY}):=\left\{ \mathrm{P}_{UV}:\exists\left(U^{n},V^{n}\right)\sim\mathrm{P}_{UV}^{n}\textrm{ s.t. }U^{n}\to X^{n}\to Y^{n}\to V^{n}\right\} .
\]
}
\end{defn}

\textcolor{blue}{This problem is termed }\textcolor{blue}{\emph{Non-Interactive
Simulation of Random Variables }}\textcolor{blue}{\cite{kamath2016non}.
The case in which $X,Y,U,V\in\{0,1\}$ and only one-dimensional $\left(U,V\right)$
is required to be generated was studied in \cite{yu2019bounds}. The
non-interactive simulation problem is motivated naturally by several
models in distributed control systems and cryptography. It is also
related to the }\textcolor{blue}{\emph{Non-Interactive Correlation
Distillation Problem}}\textcolor{blue}{, in which the collision probability
$\mathbb{P}\left(U=V\right)$ is required to be maximized \cite{yang2007possibility,mossel2005coin,mossel2006non}.
Therefore, studying the non-interactive simulation problem is not
only of theoretical significance, but is also of tremendous applicabilities.
Furthermore, the non-interactive simulation problem or the non-interactive
correlation distillation problem can be also interpreted from the
perspectives of noise-stability (or noise-sensitivity); see \cite{mossel2005coin}.}

\textcolor{blue}{In this paper, we will provide an impossibility result
for the non-interactive simulation problem by using a new quantity
named }\textcolor{blue}{\emph{information-correlation function}}\textcolor{blue}{.
Before investigating the non-interactive simulation problem, we introduce
the information-correlation function first.}

\subsection{\textcolor{blue}{Information-Correlation Function}}
\begin{defn}
\textcolor{blue}{For $\left(X,Y\right)\sim\mathrm{P}_{XY}$, the }\textcolor{blue}{\emph{information-correlation
function}}\textcolor{blue}{{} of $X$ and $Y$ is defined by 
\begin{equation}
C_{\beta}(X;Y):={\displaystyle \inf_{\mathrm{P}_{W|X,Y}:\rho_{\mathrm{m}}(X;Y|W)\leq\beta}I(X,Y;W)},\beta\in[0,1],\label{eq:-24-3}
\end{equation}
where the mutual information $I(X,Y;W)$ is defined in \eqref{eq:-10}.
Furthermore, for $\beta\in(0,1]$, we define 
\begin{align*}
C_{\beta^{-}}(X;Y) & :=\lim_{\alpha\uparrow\beta}C_{\alpha}(X;Y).
\end{align*}
}
\end{defn}

\textcolor{blue}{Intuitively, the information-correlation function
of $X$ and $Y$ quantifies the minimum amount of ``common information''
that can be extracted from $\left(X,Y\right)$ such that the ``private
information'' of $X$ and $Y$ is at most $\beta$-correlated under
the conditional maximal correlation measure, i.e., $\rho_{\mathrm{m}}(X;Y|W)\leq\beta$.
Two closely related quantities are the Gács-Körner common information
\cite{gacs1973common} and Wyner common information \cite{Wyner}.
The former is defined in \eqref{eq:GK}, and the latter is defined
as follows. The Wyner common information between $X$ and $Y$ is
\[
C_{\mathrm{W}}(X;Y):=\inf_{\mathrm{P}_{W|X,Y}:X\to W\to Y}I(X,Y;W).
\]
Properties of the information-correlation function, as well as its
relationship to the Gács-Körner common information and Wyner common
information are shown in the following proposition. The proof is given
in Appendix \ref{sec:ICF}. }
\begin{prop}
\textup{\textcolor{blue}{\label{lem:ICFproperties}(a) If $\left(X,Y\right)$
has finite support $\mathcal{X}\times\mathcal{Y}$, then for the infimum
in \eqref{eq:-24-3}, it suffices to restrict the support size of
$W$ such that $|\mathcal{W}|\leq|\mathcal{X}||\mathcal{Y}|.$}}

\textup{\textcolor{blue}{(b) For any random variables $X,Y,$ the
information-correlation function $C_{\beta}(X;Y)$ is non-increasing
in $\beta.$ Moreover, 
\begin{align}
 & C_{\beta}(X;Y)=0\textrm{ for }\rho_{\mathrm{m}}(X;Y)\leq\beta\leq1,\label{eq:-60}\\
 & C_{\beta}(X;Y)>0\textrm{ for }0\leq\beta<\rho_{\mathrm{m}}(X;Y),\label{eq:-positive}\\
 & C_{0}(X;Y)=C_{\mathrm{W}}(X;Y),\label{eq:-61}\\
 & C_{1^{-}}(X;Y)=C_{\mathrm{GK}}(X;Y).\label{eq:-63}
\end{align}
}}

\textup{\textcolor{blue}{(c) If $P_{W|X,Y}$ attains the infimum in
\eqref{eq:-24-3}, then $\rho_{\mathrm{m}}(X;Y|W)\leq\rho_{\mathrm{m}}(X;Y|V)$
for any $V$ such that $\left(X,Y\right)\rightarrow W\rightarrow V$.}}

\textup{\textcolor{blue}{(d) (Additivity) Assume $(X^{n},Y^{n})$
are $n$ i.i.d. pairs of random variables. Then we have 
\begin{equation}
C_{\beta}(X^{n};Y^{n})={\displaystyle \sum_{i=1}^{n}C_{\beta}(X_{i};Y_{i})}.\label{eq:-64}
\end{equation}
}}
\end{prop}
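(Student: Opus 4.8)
The plan is to prove each of the four parts of Proposition~\ref{lem:ICFproperties} in turn, leaning heavily on the characterization $\rho_{\mathrm{m}}(X;Y|W)=\esssup_{w}\rho_{\mathrm{m}}(X;Y|W=w)$ from \eqref{eq:-4} and on the singular-value characterization in Theorem~\ref{lem:Singular-value-characterization}. For part~(a), the cardinality bound, I would use a standard support-reduction argument in the spirit of Carath\'eodory's theorem. Fix a feasible $\mathrm{P}_{W|X,Y}$. Writing $I(X,Y;W)=H(X,Y)-H(X,Y|W)$, I note that $H(X,Y)$ is fixed by $\mathrm{P}_{XY}$, so minimizing $I$ is equivalent to maximizing $H(X,Y|W)=\sum_w P_W(w)H(P_{X,Y|W=w})$. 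Each conditioning value $w$ contributes a point $P_{X,Y|W=w}$ in the $(|\mathcal{X}||\mathcal{Y}|-1)$-dimensional simplex of distributions on $\mathcal{X}\times\mathcal{Y}$, and the constraint $\rho_{\mathrm{m}}(X;Y|W)\le\beta$ is equivalent, by \eqref{eq:-4}, to requiring $\rho_{\mathrm{m}}(X;Y|W=w)\le\beta$ for (almost) every active $w$. Since the overall marginal $P_{X,Y}=\sum_w P_W(w)P_{X,Y|W=w}$ is a fixed point and the objective is linear in the weights for fixed atoms, I would invoke the Fenchel--Eggleston--Carath\'eodory theorem: to preserve the $|\mathcal{X}||\mathcal{Y}|-1$ constraints pinning down $P_{X,Y}$ plus the value of the concave objective, it suffices to use at most $|\mathcal{X}||\mathcal{Y}|$ atoms $w$, each still satisfying the per-letter constraint, which gives $|\mathcal{W}|\le|\mathcal{X}||\mathcal{Y}|$.

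For part~(b), monotonicity in $\beta$ is immediate since enlarging $\beta$ enlarges the feasible set $\{\mathrm{P}_{W|X,Y}:\rho_{\mathrm{m}}(X;Y|W)\le\beta\}$ and hence can only lower the infimum. Equation~\eqref{eq:-60} holds because when $\beta\ge\rho_{\mathrm{m}}(X;Y)$ the degenerate $W$ is already feasible, giving $I(X,Y;W)=0$. For \eqref{eq:-61}, I would show that the constraint $\rho_{\mathrm{m}}(X;Y|W)\le 0$ forces $\rho_{\mathrm{m}}(X;Y|W=w)=0$ a.e., which by statement~2) of Theorem~\ref{lem:relationship} is exactly conditional independence $X\perp Y\mid W$, i.e.\ the Markov chain $X\to W\to Y$; thus the feasible set coincides with Wyner's and $C_0=C_{\mathrm{W}}$. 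For \eqref{eq:-63}, I would use that $\rho_{\mathrm{m}}(X;Y|W)<1$ (the relevant regime as $\beta\uparrow 1$) is, by statement~2) of Theorem~\ref{lem:relationship}, equivalent to $C_{\mathrm{GK}}(X;Y|W)=0$; combining this with the decomposition $C_{\mathrm{GK}}(X;Y)=C_{\mathrm{GK}}(X;Y|W)+H(f(X,U)|U)$-type identities for the common function, the limit $C_{1^-}$ recovers the G\'acs--K\"orner quantity. The positivity claim~\eqref{eq:-positive} is the most delicate of part~(b): I would argue by contradiction using a compactness/continuity argument --- if $C_\beta=0$ for some $\beta<\rho_{\mathrm{m}}(X;Y)$, one extracts a sequence of feasible $W$ with $I(X,Y;W)\to 0$, which by finite support (invoking part~(a)) lives in a compact set, and passing to a limit yields a degenerate $W$ that is still feasible, forcing $\rho_{\mathrm{m}}(X;Y)\le\beta$, a contradiction.

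For part~(c), the self-improving property, the key observation is that if $(X,Y)\to W\to V$, then conditioning can only improve the maximal correlation in the sense captured by \eqref{eq:-4}. I would argue that $\rho_{\mathrm{m}}(X;Y|V)=\esssup_v \rho_{\mathrm{m}}(X;Y|V=v)$, and since each $P_{X,Y|V=v}$ is a mixture $\sum_w P_{W|V}(w|v)P_{X,Y|W=w}$ of the conditionals indexed by $W$, I would show that the optimality of $W$ in \eqref{eq:-24-3} combined with the data-processing structure forces the essential supremum over $v$ to be at least the essential supremum over $w$; intuitively, further processing $W\to V$ coarsens the conditioning and cannot decrease the conditional maximal correlation below its value $\rho_{\mathrm{m}}(X;Y|W)$ at the optimum. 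For part~(d), additivity, the upper bound $C_\beta(X^n;Y^n)\le\sum_i C_\beta(X_i;Y_i)$ follows by taking the product $W^n=(W_1,\dots,W_n)$ of individual optimizers and invoking the tensorization property (Theorem~\ref{lem:MCsequence}), which guarantees $\rho_{\mathrm{m}}(X^n;Y^n|W^n)=\max_i\rho_{\mathrm{m}}(X_i;Y_i|W_i)\le\beta$ while mutual information adds for independent blocks. The matching lower bound is the main obstacle: I would establish it via a single-letterization argument, expanding $I(X^n,Y^n;W)=\sum_i I(X_i,Y_i;W|X^{i-1},Y^{i-1})$ and using a suitable auxiliary $W_i:=(W,X^{i-1},Y^{i-1})$ together with the tensorization/essential-supremum machinery to verify that each induced per-coordinate conditional maximal correlation stays below $\beta$; the delicate point is confirming that the per-letter constraint $\rho_{\mathrm{m}}(X_i;Y_i|W_i)\le\beta$ survives the conditioning on past coordinates, which is where I expect to invest the most care.
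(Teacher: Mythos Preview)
Parts~(a) and most of~(b) match the paper. For \eqref{eq:-63}, however, your sketch invokes an unestablished chain-rule decomposition for $C_{\mathrm{GK}}$; the paper instead argues directly that whenever $\rho_{\mathrm{m}}(X;Y|W)<1$ the G\'acs--K\"orner common part $f^*(X)=g^*(Y)$ must already be a deterministic function of $W$ (otherwise $\rho(f^*(X);g^*(Y)|W)=1$), whence $I(X,Y;W)\ge H(f^*(X))=C_{\mathrm{GK}}(X;Y)$; the other direction is just $W=f^*(X)$.

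Part~(c) has a genuine gap. Your stated intuition---that ``further processing $W\to V$ coarsens the conditioning and cannot decrease the conditional maximal correlation below its value at the optimum''---is false as a general principle: nothing in the mixture representation $P_{X,Y|V=v}=\sum_w P_{W|V}(w|v)P_{X,Y|W=w}$ prevents $\rho_{\mathrm{m}}(X;Y|V)<\rho_{\mathrm{m}}(X;Y|W)$, and such examples exist. The missing ingredient is the data-processing inequality for \emph{mutual information}, not for maximal correlation: since $(X,Y)\to W\to V$, one has $I(X,Y;V)\le I(X,Y;W)$, with strict inequality unless also $(X,Y)\to V\to W$. The paper splits on this dichotomy. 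If $(X,Y)\to V\to W$ holds as well, one checks $\rho_{\mathrm{m}}(X;Y|W)=\rho_{\mathrm{m}}(X;Y|W,V)=\rho_{\mathrm{m}}(X;Y|V)$ directly. Otherwise $I(X,Y;V)<I(X,Y;W)$ strictly; but if one had $\rho_{\mathrm{m}}(X;Y|V)<\rho_{\mathrm{m}}(X;Y|W)\le\beta$, then $V$ would be feasible with strictly smaller mutual information, contradicting the optimality of $W$.

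For part~(d), your auxiliary $W_i=(W,X^{i-1},Y^{i-1})$ can be made to work, but the constraint verification you anticipate as delicate requires iterating \eqref{eq:-51} to pass from $\rho_{\mathrm{m}}(X^n;Y^n|W)\le\beta$ down to $\rho_{\mathrm{m}}(X_i;Y_i|W,X^{i-1},Y^{i-1})\le\beta$. The paper's lower bound avoids this entirely: it uses the \emph{same} $W$ for every coordinate, so feasibility $\rho_{\mathrm{m}}(X_i;Y_i|W)\le\rho_{\mathrm{m}}(X^n;Y^n|W)\le\beta$ is just the basic monotonicity in Theorem~1, and the i.i.d.\ structure gives $I(X^n,Y^n;W)\ge\sum_i I(X_i,Y_i;W)$ via the chain rule and $(X_i,Y_i)\perp(X^{i-1},Y^{i-1})$.
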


\begin{rem}
\textcolor{blue}{For any pair of random variables $\left(X,Y\right)$,
$C_{\beta}(X;Y)$ is non-increasing in $\beta$, but it is not necessarily
convex or concave; see the Gaussian case in the next subsection. $C_{\beta}(X;Y)$
is discontinuous at $\beta=1,$ if the Gács-Körner common information
between $X,Y$ is strictly positive. Lemma \ref{lem:ICFproperties}
implies the Gács-Körner common information and Wyner common information
are two special points on the information-correlation function. }
\end{rem}

\textcolor{blue}{Another important property of the information-correlation
function --- the data processing inequality --- is provided in the
following proposition. }
\begin{prop}
\textup{\textcolor{blue}{\label{thm:DPI_ICF}(Data processing inequality).
If random variables $X,Z,Y$ form a Markov chain $X\rightarrow Z\rightarrow Y$
(i.e., $X$ and $Y$ are conditionally independent given $Z$), then
\[
C_{\beta}(X;Y)\leq C_{\beta}(X;Z),\forall\beta\in[0,1].
\]
}}
\end{prop}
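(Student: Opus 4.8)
The plan is to prove the inequality by a change-of-auxiliary argument: given a near-optimal test channel for the $(X;Z)$ problem, I will reuse the \emph{same} auxiliary $W$ for the $(X;Y)$ problem and show it is both feasible and no more expensive. Concretely, fix $\epsilon>0$ and choose $P_{W|X,Z}$ with $\rho_{\mathrm{m}}(X;Z|W)\le\beta$ and $I(X,Z;W)\le C_{\beta}(X;Z)+\epsilon$. Couple $W$ to $Y$ through the Markov chain by defining the joint law $P_{X,Y,Z,W}=P_{X}P_{Z|X}P_{Y|Z}P_{W|X,Z}$, so that $Y$ is conditionally independent of $(X,W)$ given $Z$. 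Marginalizing out $Z$ yields a legitimate test channel $P_{W|X,Y}$ for the $(X;Y)$ problem whose $(X,Y)$-marginal is the prescribed $P_{X,Y}$ (this uses the Markov assumption $X\to Z\to Y$).

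Two facts then have to be checked, both following from the Markov structure of this coupling. First, feasibility: I claim $\rho_{\mathrm{m}}(X;Y|W)\le\beta$. In the coupled law one verifies that $X$ and $Y$ are conditionally independent given $(Z,W)$, i.e. $X\to(Z,W)\to Y$, because conditioned on $Z$ the factor $P_{Y|Z}$ separates $Y$ from $(X,W)$. Applying the conditional data processing inequality (Theorem~\ref{thm:(Data-processing-inequality).}) with conditioning variable $U=W$ gives $\rho_{\mathrm{m}}(X;Y|W)\le\rho_{\mathrm{m}}(X;Z|W)\,\rho_{\mathrm{m}}(Y;Z|W)\le\rho_{\mathrm{m}}(X;Z|W)\le\beta$, where the middle inequality uses $\rho_{\mathrm{m}}\le1$. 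Second, cost: I claim $I(X,Y;W)\le I(X,Z;W)$. Since $W$ is generated from $(X,Z)$ while $Y$ comes from $Z$ alone, $Y$ and $W$ are conditionally independent given $(X,Z)$, so $I(Y;W|X,Z)=0$; the chain rule then gives $I(X,Y;W)\le I(X,Y,Z;W)=I(X,Z;W)+I(Y;W|X,Z)=I(X,Z;W)$.

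Combining the two facts, $W$ is feasible for $C_{\beta}(X;Y)$ and satisfies $C_{\beta}(X;Y)\le I(X,Y;W)\le I(X,Z;W)\le C_{\beta}(X;Z)+\epsilon$; letting $\epsilon\downarrow0$ completes the proof. I expect the only real content to be the feasibility step, namely verifying the conditional independence $X\to(Z,W)\to Y$ in the coupled law so that Theorem~\ref{thm:(Data-processing-inequality).} applies, since the mutual-information bound is a routine chain-rule computation and the reuse of the same $W$ is exactly what makes the data-processing direction $C_{\beta}(X;Y)\le C_{\beta}(X;Z)$ fall out.
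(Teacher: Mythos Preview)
Your proof is correct and follows essentially the same approach as the paper: both construct the joint law $P_{X,Z,Y,W}=P_{XZ}P_{Y|Z}P_{W|X,Z}$, verify the Markov chains $W\to(X,Z)\to Y$ and $X\to(Z,W)\to Y$, and then apply the data processing inequalities for mutual information and for the conditional maximal correlation (Theorem~\ref{thm:(Data-processing-inequality).}) respectively. Your version is slightly more explicit in handling the infimum via an $\epsilon$-slack and in spelling out the chain-rule computation for $I(X,Y;W)\le I(X,Z;W)$, but the argument is the same.
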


\begin{proof}
\textcolor{blue}{Assume random variables $X,Z,Y$ form a Markov chain
$X\rightarrow Z\rightarrow Y$. For an arbitrary conditional distribution
$\mathrm{P}_{W|X,Z}$, we introduce a new random vector $W$ such
that $\left(X,Z,Y,W\right)\sim\mathrm{P}_{XZ}\mathrm{P}_{Y|Z}\mathrm{P}_{W|X,Z}$.
Hence $W\rightarrow(X,Z)\rightarrow Y$ and $X\rightarrow(Z,W)\rightarrow Y$.
By the data processing inequality on mutual information \cite[Theorem 2.8.1]{Cover},
we have 
\begin{equation}
I(X,Y;W)\leq I\left(X,Z;W\right).\label{eq:-31}
\end{equation}
By the data processing inequality on maximal correlation (Theorem
\ref{thm:(Data-processing-inequality).}), we have 
\begin{equation}
\rho_{\mathrm{m}}(X;Y|W)\leq\rho_{\mathrm{m}}\left(X;Z|W\right).\label{eq:-33}
\end{equation}
Combining \eqref{eq:-31} and \eqref{eq:-33}, we obtain that 
\[
C_{\beta}(X;Y)\leq C_{\beta}(X;Z),\forall\beta\in[0,1].
\]
}
\end{proof}
\textcolor{blue}{For jointly Gaussian random variables, the information-correlation
function is characterized in the following proposition. The proof
is given in Appendix \ref{sec:Gaussian}. }
\begin{prop}
\textup{\textcolor{blue}{\label{thm:Gaussian}(Gaussian random variables).
For jointly Gaussian random variables $\left(X,Y\right)$ with correlation
coefficient $\beta_{0},$ 
\begin{equation}
C_{\beta}(X;Y)=\frac{1}{2}\log^{+}\left[\frac{1+\beta_{0}}{1-\beta_{0}}/\frac{1+\beta}{1-\beta}\right].
\end{equation}
}}
\end{prop}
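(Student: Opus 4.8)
The plan is to prove the formula by establishing matching achievability (an explicit choice of $W$) and converse (a lower bound valid for every admissible $W$) bounds. Since maximal correlation and mutual information are invariant under the sign flip $Y\mapsto-Y$, I would first assume $\beta_{0}\ge0$ and normalize $\mathrm{var}(X)=\mathrm{var}(Y)=1$, so that $\Sigma_{XY}=\left(\begin{smallmatrix}1 & \beta_{0}\\ \beta_{0} & 1\end{smallmatrix}\right)$. When $\beta\ge\beta_{0}$ we have $\rho_{\mathrm{m}}(X;Y)=\beta_{0}\le\beta$ (Theorem \ref{thm:-(Gaussian-case).}), so the degenerate $W$ is admissible and $C_{\beta}(X;Y)=0$, matching the vanishing $\log^{+}$. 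Hence the real work is in the regime $0\le\beta<\beta_{0}$.

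For achievability, I would take $W$ jointly Gaussian with $(X,Y)$, symmetric in $X$ and $Y$, with $\mathrm{var}(W)=1$ and $\mathrm{cov}(X,W)=\mathrm{cov}(Y,W)=c$. The conditional law of $(X,Y)$ given $W=w$ is then Gaussian with covariance independent of $w$, whose correlation equals $\frac{\beta_{0}-c^{2}}{1-c^{2}}$; choosing $c^{2}=\frac{\beta_{0}-\beta}{1-\beta}\in[0,1)$ makes this correlation exactly $\beta$. Because $(X,Y,W)$ is jointly Gaussian, Theorem \ref{thm:-(Gaussian-case).} gives $\rho_{\mathrm{m}}(X;Y|W)=|\rho(X;Y|W)|=\beta$, so this $W$ is admissible. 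It then remains to evaluate $I(X,Y;W)=\tfrac12\log\frac{\det\Sigma_{XY}}{\det\Sigma_{XY|W}}$ with $\det\Sigma_{XY}=1-\beta_{0}^{2}$ and $\det\Sigma_{XY|W}=(1-\beta_{0})(1+\beta_{0}-2c^{2})$; substituting $c^{2}$ and simplifying yields $\tfrac12\log\frac{(1+\beta_{0})(1-\beta)}{(1-\beta_{0})(1+\beta)}$, establishing the upper bound on $C_{\beta}(X;Y)$.

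For the converse, let $W$ be any random variable with $\rho_{\mathrm{m}}(X;Y|W)\le\beta$. By the characterization in Theorem \ref{thm:Alternative-characterization} together with $\rho_{\mathrm{m}}\ge|\rho|$ from Theorem \ref{lem:relationship}, the event-conditional Pearson correlation satisfies $|\rho(X;Y|W=w)|\le\beta$ for $P_{W}$-a.e.\ $w$; writing $\Sigma_{w}=\left(\begin{smallmatrix}a_{w} & c_{w}\\ c_{w} & b_{w}\end{smallmatrix}\right)$ for the conditional covariance (finite a.e.\ since $\mathbb{E}[\mathrm{var}(X|W)]\le\mathrm{var}(X)<\infty$), this reads $c_{w}^{2}\le\beta^{2}a_{w}b_{w}$. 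I would then write $I(X,Y;W)=h(X,Y)-h(X,Y|W)$ with $h$ the differential entropy, bound each conditional entropy by the Gaussian maximum-entropy inequality $h(X,Y|W=w)\le\tfrac12\log\big((2\pi e)^{2}\det\Sigma_{w}\big)$, and apply Jensen's inequality to the concave map $\log\det$ to pass from $\mathbb{E}_{W}\log\det\Sigma_{w}$ to $\log\det\bar\Sigma$, where $\bar\Sigma:=\mathbb{E}_{W}[\Sigma_{w}]$. The law of total covariance gives $\bar\Sigma\preceq\Sigma_{XY}$, and Cauchy--Schwarz ($|\mathbb{E}c_{w}|\le\beta\,\mathbb{E}\sqrt{a_{w}b_{w}}\le\beta\sqrt{\mathbb{E}a_{w}\,\mathbb{E}b_{w}}$) shows $\bar\Sigma$ itself has correlation at most $\beta$. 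Thus $I(X,Y;W)\ge\tfrac12\log\frac{\det\Sigma_{XY}}{\det\bar\Sigma}$, and it suffices to upper bound $\det\bar\Sigma$ over all $2\times2$ covariance matrices with $\bar\Sigma\preceq\Sigma_{XY}$ and correlation $\le\beta$.

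The main obstacle is this last finite-dimensional optimization, with the measure-theoretic bookkeeping of the previous step. I would solve it by first symmetrizing: averaging $\bar\Sigma$ with its $X\leftrightarrow Y$ swap stays feasible and, by AM--GM on the diagonal, does not decrease $\det$, so the maximizer may be taken with $a=b$. The constraint $\bar\Sigma\preceq\Sigma_{XY}$ then forces $c\ge a-1+\beta_{0}$ while the correlation constraint forces $c\le\beta a$; maximizing $a^{2}-c^{2}$ shows both are active at the optimum, giving $a=\frac{1-\beta_{0}}{1-\beta}$, $c=\beta a$, and $\det\bar\Sigma\le(1-\beta_{0})^{2}\frac{1+\beta}{1-\beta}$. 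Plugging this into $\tfrac12\log\frac{\det\Sigma_{XY}}{\det\bar\Sigma}$ reproduces exactly the achievability value, closing the gap. I expect the delicate points to be confirming that the optimum sits at this symmetric, doubly-active vertex rather than on another face, and justifying the entropy identity: when the conditional laws fail to be absolutely continuous on a set of positive $P_{W}$-measure one has $h(X,Y|W)=-\infty$ and $I(X,Y;W)=\infty$, so the bound is trivial, and otherwise $I(X,Y;W)=h(X,Y)-h(X,Y|W)$ applies as used.
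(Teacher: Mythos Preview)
Your proposal is correct and shares the same skeleton as the paper's proof: achievability via a jointly Gaussian $W$ symmetric in $X,Y$, and a converse that writes $I(X,Y;W)=h(X,Y)-h(X,Y|W)$, applies the Gaussian maximum-entropy bound pointwise, and then Jensen/concavity of $\log\det$ to reduce everything to bounding $\det\bar\Sigma$ with $\bar\Sigma=\mathbb{E}_{W}\Sigma_{w}$. The two arguments diverge only at this last step. The paper factors $\det\bar\Sigma=\mathbb{E}\mathrm{var}(X|W)\,\mathbb{E}\mathrm{var}(Y|W)\,(1-\rho^{2}(X;Y|W))$ and invokes Theorem~\ref{lem:Conditioning-reduces-covariance} (conditioning reduces covariance gap) to obtain $\sqrt{\mathbb{E}\mathrm{var}(X|W)\,\mathbb{E}\mathrm{var}(Y|W)}\,(1-\rho(X;Y|W))\le 1-\beta_{0}$ in one stroke, after which $\rho(X;Y|W)\le\beta$ finishes. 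You instead use the law of total covariance to get $\bar\Sigma\preceq\Sigma_{XY}$ and then solve the finite-dimensional optimization over $2\times2$ matrices with that PSD constraint and correlation $\le\beta$, symmetrizing first. Your route is more self-contained (it bypasses Theorem~\ref{lem:Conditioning-reduces-covariance} entirely) at the cost of the explicit optimization; the paper's route is a one-liner once that theorem is in hand, and is packaged as a lower bound valid for \emph{any} absolutely continuous pair $(X,Y)$, not just Gaussians. The two are in fact closely related: after your symmetrization, the active PSD face $c\ge a-1+\beta_{0}$ is exactly the statement of Theorem~\ref{lem:Conditioning-reduces-covariance} specialized to the symmetric case, so your optimization is implicitly rediscovering that inequality.
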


\begin{rem}
\textcolor{blue}{When specialized to the case $\beta=0$, we obtain
$C_{\mathrm{W}}(X;Y)=C_{0}(X;Y)={\displaystyle \frac{1}{2}\log^{+}\left[\frac{1+\beta_{0}}{1-\beta_{0}}\right]}$,
which was first proven in \cite{xu2013wyner}. }
\end{rem}

\textcolor{blue}{For the symmetric bivariate random variable, an upper
bound on the information-correlation function is given in the following
proposition. The proof is given in Appendix \ref{sec:DSBS}. }
\begin{prop}
\textup{\textcolor{blue}{\label{thm:DSBS} For the symmetric bivariate
random variable $\left(X,Y\right)$ with distribution 
\[
P_{XY}=\left[\begin{array}{cc}
\frac{1}{2}\left(1-p_{0}\right) & \frac{1}{2}p_{0}\\
\frac{1}{2}p_{0} & \frac{1}{2}\left(1-p_{0}\right)
\end{array}\right],
\]
(i.e., the crossover probability being $p_{0}$), we have 
\begin{equation}
C_{\beta}(X;Y)\leq1+H_{2}\left(p_{0}\right)-H_{4}\left(\frac{1}{2}\left(1-p_{0}+\sqrt{\frac{1-2p_{0}-\beta}{1-\beta}}\right),\frac{1}{2}\left(1-p_{0}-\sqrt{\frac{1-2p_{0}-\beta}{1-\beta}}\right),\frac{p_{0}}{2},\frac{p_{0}}{2}\right)\label{eq:-48}
\end{equation}
for $0\leq\beta<1-2p_{0}$, and $C_{\beta}(X;Y)=0$ for $\beta\geq1-2p_{0}$,
where 
\begin{align}
H_{2}(p) & =-p\log p-(1-p)\log(1-p),\label{eq:binaryentropy}\\
H_{4}(a,b,c,d) & =-a\log a-b\log b-c\log c-d\log d
\end{align}
respectively denote the binary and quaternary entropy functions. }}
\end{prop}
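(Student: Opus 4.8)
The plan is to split into the two regimes of $\beta$ and, in each, either invoke a general property of $C_\beta$ or exhibit a single explicit auxiliary $W$ that meets the constraint $\rho_{\mathrm{m}}(X;Y|W)\le\beta$ and whose mutual information equals the claimed right-hand side; since $C_\beta$ is defined as an infimum over feasible $\mathrm{P}_{W|X,Y}$, any such $W$ furnishes the desired upper bound. Throughout I use two elementary facts about the DSBS, after mapping the alphabet to $\{0,1\}$: first, $\rho(X;Y)=1-2p_0$ and $H(X,Y)=H(X)+H(Y|X)=1+H_2(p_0)$ (here $X$ is uniform binary, so $H(X)=1$); second, for a pair of binary random variables the maximal correlation equals the absolute Pearson correlation, since every function of a binary variable is affine in it (equivalently, the second singular value of a $2\times2$ matrix is $|\rho|$, cf.\ Theorem \ref{lem:Singular-value-characterization}). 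In particular $\rho_{\mathrm{m}}(X;Y)=1-2p_0$. Consequently, for $\beta\ge 1-2p_0=\rho_{\mathrm{m}}(X;Y)$ the claim $C_\beta(X;Y)=0$ is immediate from \eqref{eq:-60} of Proposition \ref{lem:ICFproperties}(b), so the substance lies entirely in the regime $0\le\beta<1-2p_0$.

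For that regime the key step is the construction, which generalizes the auxiliary used for the Wyner common information of a DSBS (there $A,B$ are taken independent given $W$). I let $W\sim\mathrm{Bern}(1/2)$ be independent of a pair $(A,B)$ of binary variables and set $X=W\oplus A$ and $Y=W\oplus B$, with $\oplus$ denoting mod-$2$ addition. I take the joint law of $(A,B)$ to be $\mathrm{P}(A=0,B=0)=a$, $\mathrm{P}(A=1,B=1)=b$, and $\mathrm{P}(A=0,B=1)=\mathrm{P}(A=1,B=0)=p_0/2$, where $a=\tfrac12(1-p_0+q)$, $b=\tfrac12(1-p_0-q)$, and $q$ is a free parameter to be fixed. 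A direct check shows $(X,Y)\sim\mathrm{P}_{XY}$: the off-diagonal mass yields $\mathrm{P}(X\ne Y)=\mathrm{P}(A\ne B)=p_0$, the independence of the uniform $W$ makes both $X$ and $Y$ uniform, and averaging over the two values of $W$ recovers the diagonal mass $\tfrac12(a+b)=\tfrac12(1-p_0)$. One must also record the side conditions $a,b\ge0$, i.e.\ $q\le 1-p_0$, which hold on the relevant range of $\beta$.

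Next I impose the correlation constraint. Conditioned on $W=w$, the map $(A,B)\mapsto(w\oplus A,\,w\oplus B)$ is a bijection of the alphabet, so $\rho_{\mathrm{m}}(X;Y|W=w)=\rho_{\mathrm{m}}(A;B)=|\rho(A;B)|$ for each $w$; hence by \eqref{eq:-4}, $\rho_{\mathrm{m}}(X;Y|W)=|\rho(A;B)|$. Computing the marginals $\mathrm{P}(A=0)=\mathrm{P}(B=0)=\tfrac{1+q}{2}$ and the covariance gives $\rho(A;B)=\frac{1-2p_0-q^2}{1-q^2}$. Setting this equal to $\beta$ and solving yields exactly $q^2=\frac{1-2p_0-\beta}{1-\beta}$, i.e.\ $q=\sqrt{\frac{1-2p_0-\beta}{1-\beta}}$, which lies in $(0,1)$ precisely when $0\le\beta<1-2p_0$; the same range also secures $q\le 1-p_0$. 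Thus the constraint is met with equality, $\rho_{\mathrm{m}}(X;Y|W)=\beta$, and the four numbers $a,\,b,\,p_0/2,\,p_0/2$ become exactly the arguments of $H_4$ in \eqref{eq:-48}.

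Finally I evaluate the cost. Because conditioning on each value of $W$ only relabels $(A,B)$ bijectively, $H(X,Y|W)=H(A,B)=H_4\!\left(a,b,\tfrac{p_0}{2},\tfrac{p_0}{2}\right)$, whence $I(X,Y;W)=H(X,Y)-H(X,Y|W)=1+H_2(p_0)-H_4\!\left(a,b,\tfrac{p_0}{2},\tfrac{p_0}{2}\right)$, which is precisely the right-hand side of \eqref{eq:-48}. Since this $W$ is feasible for the infimum defining $C_\beta(X;Y)$, the bound follows. The only genuinely creative step is guessing the $X=W\oplus A,\ Y=W\oplus B$ representation with an \emph{asymmetric} $(A,B)$; after that the argument is the routine algebra of matching $\rho(A;B)=\beta$ to the stated $q$, which I expect to be the main (though mild) point requiring care.
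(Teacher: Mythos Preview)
Your proof is correct and follows essentially the same approach as the paper: both construct $W\sim\mathrm{Bern}(1/2)$ with conditional distributions $P_{X,Y|W=0}=\begin{bmatrix}a & p_0/2\\ p_0/2 & b\end{bmatrix}$ and $P_{X,Y|W=1}=\begin{bmatrix}b & p_0/2\\ p_0/2 & a\end{bmatrix}$, then choose $a,b$ so that the conditional maximal correlation equals $\beta$. Your $X=W\oplus A,\ Y=W\oplus B$ packaging and the direct Pearson computation $|\rho(A;B)|$ (using that for binary variables every function is affine) is a clean reformulation of exactly this construction; the paper instead specifies the two conditional matrices directly and computes $\rho_{\mathrm m}(X;Y|W=w)$ via the formula $\rho_{\mathrm m}^2=\sum_{x,y}P^2(x,y)/(P(x)P(y))-1$, arriving at the same $a,b$.
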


\begin{rem}
\textcolor{blue}{Numerical results show that the upper bound in \eqref{eq:-48}
is tight. }
\end{rem}

\subsection{\textcolor{blue}{Impossibility Result}}

\textcolor{blue}{Based on the information-correlation function, we
can establish the following impossibility result for the non-interactive
simulation problem. }
\begin{thm}
\textup{\textcolor{blue}{\label{thm:Impossibility}The simulation
set of $n$ i.i.d. pairs $\left(X^{n},Y^{n}\right)\sim\mathrm{P}_{XY}^{n}$
satisfies 
\[
\mathcal{S}_{n}(\mathrm{P}_{XY})\subseteq\left\{ \mathrm{P}_{UV}:C_{\beta}(U;V)\leq C_{\beta}(X;Y),\forall\beta\in[0,1]\right\} .
\]
}}
\end{thm}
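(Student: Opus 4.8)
The plan is to show that if $\mathrm{P}_{UV}\in\mathcal{S}_n(\mathrm{P}_{XY})$, then $C_\beta(U;V)\leq C_\beta(X;Y)$ for every $\beta$. By the additivity property of the information-correlation function (Proposition \ref{lem:ICFproperties}(d)), we have $C_\beta(U^n;V^n)=nC_\beta(U;V)$ and $C_\beta(X^n;Y^n)=nC_\beta(X;Y)$. Hence it suffices to prove the single-letter-free inequality $C_\beta(U^n;V^n)\leq C_\beta(X^n;Y^n)$ at the block level and then divide by $n$. So the real content is a data-processing statement for $C_\beta$ along the simulation Markov chain $U^n\to X^n\to Y^n\to V^n$.

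First I would invoke the two-sided data processing inequality for the information-correlation function. The chain $U^n\to X^n\to Y^n\to V^n$ decomposes into the Markov structures $U^n\to X^n\to Y^n$ and $X^n\to Y^n\to V^n$. Applying Proposition \ref{thm:DPI_ICF} (the DPI for $C_\beta$) to the chain $U^n\to X^n\to Y^n$ gives $C_\beta(U^n;Y^n)\leq C_\beta(X^n;Y^n)$, and applying it again — after using the symmetry $C_\beta(A;B)=C_\beta(B;A)$, which follows from the symmetry of both $I(A,B;W)$ and $\rho_{\mathrm{m}}(A;B|W)$ in $(A,B)$ — to the chain $Y^n\to X^n\to U^n$ handles the $U$-side, while the chain $X^n\to Y^n\to V^n$ handles the $V$-side. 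Concatenating these, one obtains
\[
C_\beta(U^n;V^n)\leq C_\beta(U^n;Y^n)\leq C_\beta(X^n;Y^n),
\]
where the first inequality comes from the DPI applied to $U^n\to Y^n\to V^n$ (valid because $U^n\to X^n\to Y^n\to V^n$ implies $U^n\to Y^n\to V^n$) combined with the symmetry, and the second is the DPI for $U^n\to X^n\to Y^n$.

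Combining with additivity yields $nC_\beta(U;V)=C_\beta(U^n;V^n)\leq C_\beta(X^n;Y^n)=nC_\beta(X;Y)$, and dividing by $n$ gives the claim for all $\beta\in[0,1]$, which is exactly the asserted set inclusion.

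The main obstacle I anticipate is verifying that the long simulation chain $U^n\to X^n\to Y^n\to V^n$ cleanly furnishes the shorter Markov chains needed for each application of Proposition \ref{thm:DPI_ICF}; in particular one must check that $U^n\to Y^n\to V^n$ holds (so that the DPI can be applied on the $(U,V)$ pair through the intermediate $Y^n$), and that the symmetrization $C_\beta(A;B)=C_\beta(B;A)$ is legitimate at each step. The symmetry is immediate from the definition in \eqref{eq:-24-3}, since the constraint $\rho_{\mathrm{m}}(X;Y|W)\leq\beta$ and the objective $I(X,Y;W)$ are both symmetric in the roles of $X$ and $Y$. Once these Markov reductions are in place, the proof is a short chaining of Propositions \ref{lem:ICFproperties} and \ref{thm:DPI_ICF}, with no further estimation required.
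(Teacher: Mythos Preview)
Your proposal is correct and follows essentially the same approach as the paper: two applications of the data processing inequality for $C_\beta$ (Proposition~\ref{thm:DPI_ICF}) along the Markov chain $U^n\to X^n\to Y^n\to V^n$, followed by the additivity property (Proposition~\ref{lem:ICFproperties}(d)) to single-letterize. The only cosmetic difference is that the paper chains through the intermediate $C_\beta(X^n;V^n)$ rather than your $C_\beta(U^n;Y^n)$, but these are symmetric variants of the same argument, and your verification of the needed sub-Markov-chain $U^n\to Y^n\to V^n$ and of the symmetry $C_\beta(A;B)=C_\beta(B;A)$ is sound.
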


\begin{proof}
\textcolor{blue}{Consider the simulation of $\left(U^{n},V^{n}\right)\sim\mathrm{P}_{UV}^{n}$
from $\left(X^{n},Y^{n}\right)\sim\mathrm{P}_{XY}^{n}$ such that
$U^{n}\to X^{n}\to Y^{n}\to V^{n}$. Applying the vector version of
the data processing inequality in Proposition \ref{thm:DPI_ICF},
we obtain that 
\[
C_{\rho}(U^{n};V^{n})\leq C_{\rho}(X^{n};V^{n})\leq C_{\rho}(X^{n};Y^{n}).
\]
By the additivity of the information-correlation function (Lemma \ref{lem:ICFproperties}),
we obtain 
\[
C_{\rho}(U;V)\leq C_{\rho}(X;Y).
\]
}
\end{proof}
\textcolor{blue}{Intuitively, since $X^{n},Y^{n},U^{n},V^{n}$ form
a Markov chain $U^{n}\to X^{n}\to Y^{n}\to V^{n}$, it makes sense
that $X^{n}$ and $Y^{n}$ possess more ``common information'' than
$U^{n}$ and $V^{n}$. Hence the necessary condition given in Theorem
\ref{thm:Impossibility} holds.}

\textcolor{blue}{We can also obtain the following simple bounds for
the simulation problem. By the data processing inequality on maximal
correlation (i.e., the unconditional version of Theorem \ref{thm:(Data-processing-inequality).}),
we obtain the following outer bound. 
\begin{equation}
\mathcal{S}_{n}(\mathrm{P}_{XY})\subseteq\left\{ \mathrm{P}_{UV}:\rho_{\mathrm{m}}(U;V)\leq\rho_{\mathrm{m}}(X;Y)\right\} .\label{eq:-36}
\end{equation}
From \eqref{eq:-positive}, we know that our outer bound in Theorem
\ref{thm:Impossibility} is at least as tight as the outer bound in
\eqref{eq:-36}. By the data processing inequality on mutual information
\cite[Theorem 2.8.1]{Cover}, we obtain the following outer bound.
\begin{equation}
\mathcal{S}_{n}(\mathrm{P}_{XY})\subseteq\left\{ \mathrm{P}_{UV}:I(U;V)\leq I(X;Y)\right\} .\label{eq:-38}
\end{equation}
Furthermore, we can obtain the following inner bound by using a pair
of product conditional distributions $(\mathrm{P}_{U|X}^{n},\mathrm{P}_{V|Y}^{n})$.
\begin{equation}
\mathcal{S}_{n}(\mathrm{P}_{XY})\supseteq\left\{ \mathrm{P}_{UV}:\exists(\mathrm{P}_{U|X},\mathrm{P}_{V|Y})\textrm{ s.t. }\mathrm{P}_{UV}\textrm{ is the marginal distribution of }\mathrm{P}_{U|X}\mathrm{P}_{X,Y}\mathrm{P}_{V|Y}\textrm{ on }(U,V)\right\} .\label{eq:-35}
\end{equation}
}

\textcolor{blue}{Our outer bound in Theorem \ref{thm:Impossibility},
the maximal correlation outer bound in \eqref{eq:-36}, the mutual
information outer bound in \eqref{eq:-38}, and the inner bound in
\eqref{eq:-35} are plotted in Fig. \ref{fig:simulation}. For this
figure, we assume $X,Y,U,V\in\{0,1\}$, i.e., they are binary random
variables. For this case, the joint distribution of $X$ and $Y$
is determined by the triple $\left(P_{X}(0),P_{Y}(0),P_{X,Y}(0,0)\right)$,
and so is the joint distribution of $U$ and $V$. For Fig. \ref{fig:simulation},
we assume $P_{X}(0)=P_{Y}(0)=\frac{1}{4}$ and $P_{X,Y}(0,0)=p$.
Similarly, we assume $P_{U}(0)=P_{V}(0)=\frac{1}{2}$ and $P_{U,V}(0,0)=q$.
Given $p$, we focus on the possible range of $q$. The end points
of the possible range of $q$ are located between our outer bound
and the inner bound.}

\begin{figure}[t]
\begin{centering}
\includegraphics[width=0.7\textwidth]{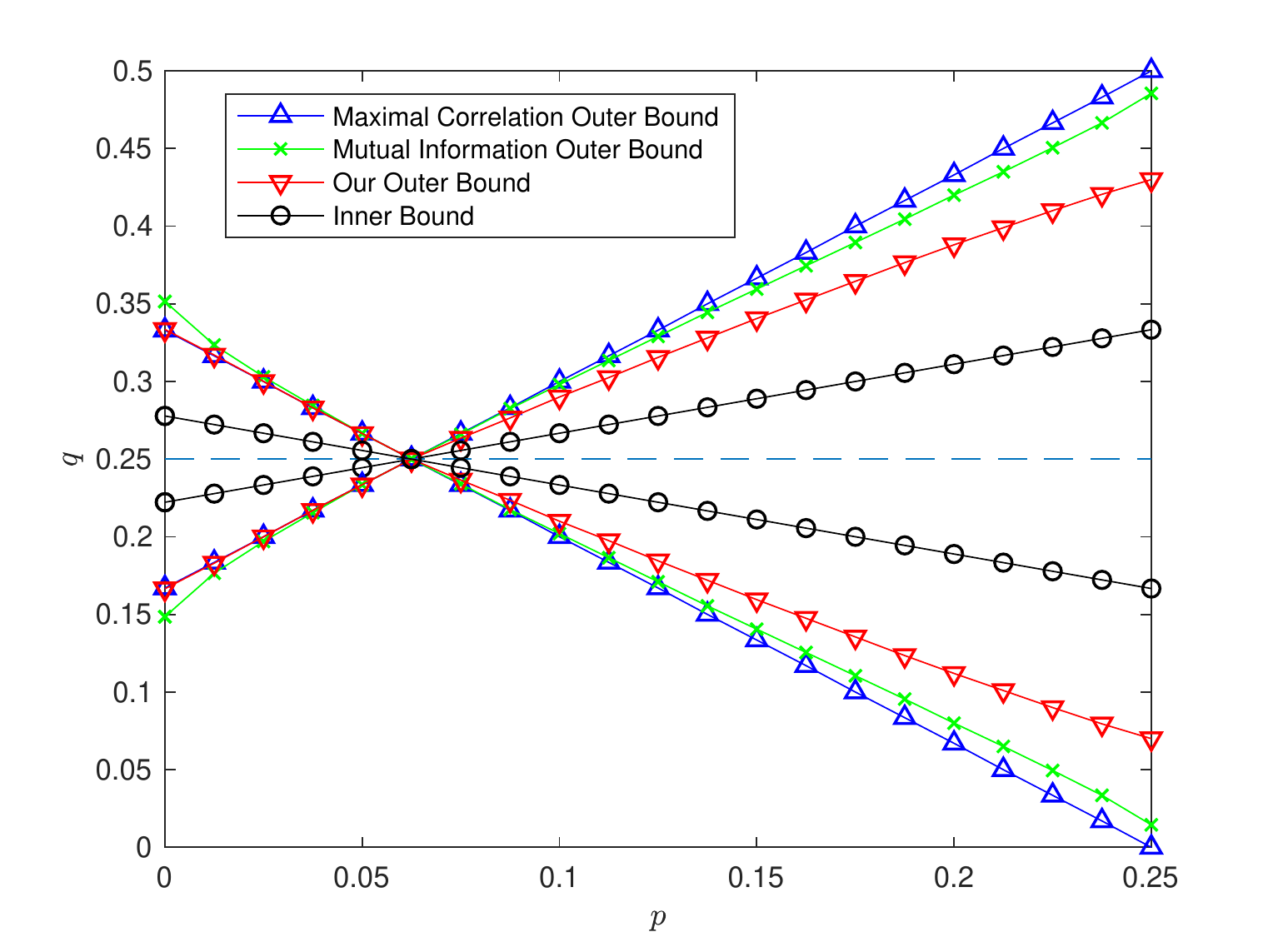} 
\par\end{centering}
\caption{\label{fig:simulation}Illustration of our outer bound in Theorem
\ref{thm:Impossibility}, the maximal correlation outer bound in \eqref{eq:-36},
the mutual information outer bound in \eqref{eq:-38}, and the inner
bound in \eqref{eq:-35}. The curves are symmetric with respect to
the line $q=\frac{1}{4}$. }
\end{figure}

\section{Concluding Remarks}

In this paper, we defined several conditional correlation measures
and derived their properties, especially for the conditional maximal
correlation. From these properties, one can observe that the maximal
correlation and correlation ratio share many similar properties as
the mutual information, such as invariance to bijections, chain rule
(correlation ratio equality), data processing inequality, etc. On
the other hand, the maximal correlation and correlation ratio also
have some properties that are different from those of the mutual information.
For example, for a sequence of pairs of independent random variables,
the mutual information between them is the sum of mutual information
over all pairs of components (i.e., additivity); while the maximal
correlation is the maximum of the maximal correlations over all pairs
of components (i.e., tensorization). Furthermore, we used the conditional
maximal correlation to define the information-correlation function,
and derived a data processing inequality for such a function. As an
application, we applied this data processing inequality to obtain
an impossibility result for the non-interactive simulation problem.

The (conditional) maximal correlation also has applications in inference
and privacy. In inference and privacy, a fundamental question is that:
Given an observation $Y$, how much information can we learn about
a hidden random variable $X$ from $Y$? Or equivalently, how much
information is leaked from $X$ to $Y$? In \cite{asoodeh2015maximal,calmon2013bounds,issa2018operational,li2018maximal,rassouli2017perfect},
the maximal correlation $\rho_{\mathrm{m}}(X;Y)$ was used to measure
the information leakage from $X$ to $Y$ (or from $Y$ to $X$).
\textcolor{blue}{Furthermore, in \cite{yu2016distortion}, the present
author, together with Li and Chen,}\textcolor{blue}{\emph{ }}\textcolor{blue}{applied
the conditional maximal correlation to derive converse results for
the problem of transmission of correlated sources over a multi-access
channel, in which the correlated sources are assumed to have a common
part. The tensorization and data processing properties of the conditional
maximal correlation (derived in the present paper) play a crucial
role in such an application.}

\appendix

\section{\label{sec:Proof-of-Theorem-cond}Proof of Theorem \ref{lem:Conditioning-reduces-covariance}}

For simplicity, we only prove the degenerate $Z$ case, i.e., 
\begin{align}
\sqrt{\mathbb{E}\mathrm{var}(X|U)\mathbb{E}\mathrm{var}(Y|U)}-\mathbb{E}\mathrm{cov}(X,Y|U) & \leq\sqrt{\mathrm{var}(X)\mathrm{var}(Y)}-\mathrm{cov}(X,Y).\label{eq:-17}
\end{align}
For non-degenerate $Z$ case, it can be proven similarly.

By the law of total covariance, we have 
\begin{align*}
\mathrm{cov}(X,Y) & =\mathbb{E}\mathrm{cov}(X,Y|U)+\mathrm{cov}(\mathbb{E}(X|U),\mathbb{E}(Y|U)).
\end{align*}
Hence to prove \eqref{eq:-17}, we only need to show 
\begin{align}
\sqrt{\mathbb{E}\mathrm{var}(X|U)\mathbb{E}\mathrm{var}(Y|U)}+\mathrm{cov}(\mathbb{E}(X|U),\mathbb{E}(Y|U)) & \leq\sqrt{\mathrm{var}(X)\mathrm{var}(Y)}.\label{eq:-34}
\end{align}

To prove this, we consider 
\begin{align}
\mathbb{E}\mathrm{var}(X|U)\mathbb{E}\mathrm{var}(Y|U)= & \left(\mathrm{var}(X)-\mathrm{var}(\mathbb{E}(X|U))\right)\left(\mathrm{var}(Y)-\mathrm{var}(\mathbb{E}(Y|U))\right)\label{eq:-69}\\
= & \mathrm{var}(X)\mathrm{var}(Y)+\mathrm{var}(\mathbb{E}(X|U))\mathrm{var}(\mathbb{E}(Y|U))\nonumber \\
 & -\mathrm{var}(X)\mathrm{var}(\mathbb{E}(Y|U))-\mathrm{var}(Y)\mathrm{var}(\mathbb{E}(X|U))\nonumber \\
\leq & \mathrm{var}(X)\mathrm{var}(Y)+\mathrm{var}(\mathbb{E}(X|U))\mathrm{var}(\mathbb{E}(Y|U))\nonumber \\
 & -2\sqrt{\mathrm{var}(X)\mathrm{var}(\mathbb{E}(Y|U))\cdot\mathrm{var}(Y)\mathrm{var}(\mathbb{E}(X|U))}\nonumber \\
= & \left(\sqrt{\mathrm{var}(X)\mathrm{var}(Y)}-\sqrt{\mathrm{var}(\mathbb{E}(X|U))\mathrm{var}(\mathbb{E}(Y|U))}\right)^{2}\label{eq:-71}
\end{align}
where \eqref{eq:-69} follows from the law of total variance 
\begin{equation}
\mathrm{var}(X)=\mathbb{E}\mathrm{var}(X|U)+\mathrm{var}(\mathbb{E}(X|U)).\label{eq:-70}
\end{equation}

Since $\mathbb{E}\mathrm{var}(X|U)\geq0$, from \eqref{eq:-70}, we
have $\mathrm{var}(\mathbb{E}(X|U))\leq\mathrm{var}(X).$ Similarly,
we have $\mathrm{var}(\mathbb{E}(Y|U))\leq\mathrm{var}(Y).$ Therefore,
\begin{equation}
\mathrm{var}(\mathbb{E}(X|U))\mathrm{var}(\mathbb{E}(Y|U))\leq\mathrm{var}(X)\mathbb{E}\mathrm{var}(Y).\label{eq:-37}
\end{equation}

Combining \eqref{eq:-71} and \eqref{eq:-37}, we have 
\begin{align*}
\sqrt{\mathbb{E}\mathrm{var}(X|U)\mathbb{E}\mathrm{var}(Y|U)} & \leq\sqrt{\mathrm{var}(X)\mathrm{var}(Y)}-\sqrt{\mathrm{var}(\mathbb{E}(X|U))\mathrm{var}(\mathbb{E}(Y|U))}.
\end{align*}
Furthermore, by the Cauchy-Schwarz inequality, it holds that 
\begin{align*}
|\mathrm{cov}(\mathbb{E}(X|U),\mathbb{E}(Y|U))| & =|\mathbb{E}\left[\left(\mathbb{E}(X|U)-\mathbb{E}(X)\right)\left(\mathbb{E}(Y|U)-\mathbb{E}(Y)\right)\right]|\\
 & \leq\sqrt{\mathbb{E}\left(\mathbb{E}(X|U)-\mathbb{E}(X)\right)^{2}\cdot\mathbb{E}\left(\mathbb{E}(Y|U)-\mathbb{E}(Y)\right)^{2}}\\
 & =\sqrt{\mathrm{var}(\mathbb{E}(X|U))\mathrm{var}(\mathbb{E}(Y|U))}.
\end{align*}
Therefore, 
\begin{align*}
\sqrt{\mathbb{E}\mathrm{var}(X|U)\mathbb{E}\mathrm{var}(Y|U)} & \leq\sqrt{\mathrm{var}(X)\mathrm{var}(Y)}-|\mathrm{cov}(\mathbb{E}(X|U),\mathbb{E}(Y|U))|\\
 & \leq\sqrt{\mathrm{var}(X)\mathrm{var}(Y)}-\mathrm{cov}(\mathbb{E}(X|U),\mathbb{E}(Y|U)).
\end{align*}
This is just the inequality \eqref{eq:-34}. Hence the proof is complete.

\section{\textcolor{blue}{\label{sec:ICF}Proof of Lemma \ref{lem:ICFproperties}}}

\textcolor{blue}{Proof of (a): To show (a), we only need to show that
for any random variable $W$, there always exists another random variable
$W'$ with support $\mathcal{W}'$ such that $|\mathcal{W}'|\leq|\mathcal{X}||\mathcal{Y}|,$
$\rho_{\mathrm{m}}(X;Y|W')\leq\rho_{\mathrm{m}}(X;Y|W)$, and $I(X,Y;W')=I(X,Y;W)$.
According to the support lemma \cite{ElGamal}, there exists a random
variable $W'$ with $\mathcal{W}'\subseteq\mathcal{W}$ and $|\mathcal{W}'|\leq|\mathcal{X}||\mathcal{Y}|$
such that 
\begin{align}
H(X,Y|W') & ={\displaystyle H(X,Y|W)},\\
P_{X,Y} & ={\displaystyle \sum_{w'}P_{W'}(w')P_{X,Y|W'}(\cdot|w')}.\label{eq:-57}
\end{align}
Since $\mathcal{W}'\subseteq\mathcal{W}$, we have $\rho_{\mathrm{m}}(X;Y|W')\leq\rho_{\mathrm{m}}(X;Y|W)$.
Furthermore, \eqref{eq:-57} implies that $H(X,Y)$ is also preserved.
Hence $I(X,Y;W)=I(X,Y;W').$ This completes the proof of (a).}

\textcolor{blue}{Proof of (b): \eqref{eq:-60} and \eqref{eq:-positive}
follow straightforwardly by definition. By definition and Lemma \ref{lem:relationship}
($\rho_{\mathrm{m}}(X;Y|W)=0$ if and only if $X\rightarrow W\rightarrow Y$),
we obtain \eqref{eq:-61}.}

\textcolor{blue}{Next we prove \eqref{eq:-63}. Assume $\left(f^{*},g^{*}\right)$
attains the Gács-Körner common information between $X$ and $Y$ (see
the definition in \eqref{eq:GK}). Set $W=f^{*}(X)$, then we have
\begin{align*}
 & \rho_{\mathrm{m}}(X;Y|W)<1,\\
 & I(X,Y;W)=H(f^{*}(X))=C_{\mathrm{GK}}(X;Y).
\end{align*}
Hence by definition, 
\begin{equation}
C_{1^{-}}(X;Y)\leq C_{\mathrm{GK}}(X;Y).\label{eq:-25}
\end{equation}
}

\textcolor{blue}{On the other hand, for any $W$ such that $\rho_{\mathrm{m}}(X;Y|W)<1$,
the random variable $f^{*}(X)$ is a deterministic function of $W$,
i.e., $f^{*}(X)=g(W)$ for some function $g$. This is because, otherwise,
we have 
\begin{align*}
\rho_{\mathrm{m}}(X;Y|W) & \geq\rho(f^{*}(X);g^{*}(Y)|W)\\
 & =\rho(f^{*}(X);f^{*}(X)|W)\\
 & =1.
\end{align*}
}

\textcolor{blue}{Since $f^{*}(X)$ is a deterministic function of
$W$, we have 
\[
I(X,Y;W)=I(X,Y;W,f^{*}(X))\geq H(f^{*}(X))=C_{\mathrm{GK}}(X;Y).
\]
Hence 
\begin{equation}
C_{1^{-}}(X;Y)\geq C_{\mathrm{GK}}(X;Y).\label{eq:-26}
\end{equation}
}

\textcolor{blue}{Combining \eqref{eq:-25} and \eqref{eq:-26} gives
us 
\[
C_{1^{-}}(X;Y)=C_{\mathrm{GK}}(X;Y).
\]
}

\textcolor{blue}{Proof of (c): Suppose $P_{W|X,Y}$ achieves the infimum
in \eqref{eq:-24-3}. If $V$ satisfies both $\left(X,Y\right)\rightarrow W\rightarrow V$
and $\left(X,Y\right)\rightarrow V\rightarrow W$, then we have $\rho_{\mathrm{m}}(X;Y|W)=\rho_{\mathrm{m}}(X;Y|W,V)=\rho_{\mathrm{m}}(X;Y|V)$.}

\textcolor{blue}{If $V$ satisfies $\left(X,Y\right)\rightarrow W\rightarrow V$
but does not satisfy $\left(X,Y\right)\rightarrow V\rightarrow W$,
then $I(X,Y;W)=I(X,Y;W,V)>I(X,Y;V)$. Hence $\rho_{\mathrm{m}}(X;Y|W)\leq\rho_{\mathrm{m}}(X;Y|V)$,
otherwise it contradicts with that $P_{WX,Y}$ achieves the infimum
in \eqref{eq:-24-3}.}

\textcolor{blue}{Proof of (d): For \eqref{eq:-64} it suffices to
prove the case of $n=2$, i.e., 
\begin{equation}
C_{\beta}(X^{2};Y^{2})=C_{\beta}(X_{1};Y_{1})+C_{\beta}(X_{2};Y_{2}).\label{eq:-66}
\end{equation}
}

\textcolor{blue}{Observe for any $P_{W|X^{2},Y^{2}}$, 
\[
\rho_{\mathrm{m}}(X^{2};Y^{2}|W)\geq\rho_{\mathrm{m}}(X_{i};Y_{i}|W),i=1,2,
\]
and 
\begin{align*}
I(X^{2},Y^{2};W) & \geq I(X_{1},Y_{1};W)+I(X_{2},Y_{2};W|X_{1},Y_{1})\\
 & =I(X_{1},Y_{1};W)+I(X_{2},Y_{2};W,X_{1},Y_{1})\\
 & \geq I(X_{1},Y_{1};W)+I(X_{2},Y_{2};W).
\end{align*}
Hence we have 
\begin{equation}
C_{\beta}(X^{2};Y^{2})\geq C_{\beta}(X_{1};Y_{1})+C_{\beta}(X_{2};Y_{2}).\label{eq:-65}
\end{equation}
}

\textcolor{blue}{Moreover, if we choose $P_{W|X^{2},Y^{2}}=P_{W_{1}|X_{1},Y_{1}}^{*}P_{W_{2}|X_{2},Y_{2}}^{*}$
in $C_{\beta}(X^{2};Y^{2})$, where $P_{W_{i}|X_{i},Y_{i}}^{*},i=1,2,$
is a distribution achieving $C_{\beta}(X_{i};Y_{i})$, then we have
\[
\rho_{\mathrm{m}}(X^{2};Y^{2}|W)=\max_{i\in\left\{ 1,2\right\} }\rho_{\mathrm{m}}(X_{i};Y_{i}|W_{i})\leq\beta,
\]
and 
\begin{equation}
I(X^{2},Y^{2};W)=I(X_{1},Y_{1};W_{1})+I(X_{2},Y_{2};W_{2})=C_{\beta}(X_{1};Y_{1})+C_{\beta}(X_{2};Y_{2}).\label{eq:-65-1}
\end{equation}
Therefore, 
\begin{equation}
C_{\beta}(X^{2};Y^{2})=\inf_{P_{W|X^{2},Y^{2}}:\rho_{\mathrm{m}}(X^{2};Y^{2}|W)\leq\beta}I(X^{2},Y^{2};W)\leq C_{\beta}(X_{1};Y_{1})+C_{\beta}(X_{2};Y_{2}).\label{eq:-66-1}
\end{equation}
}

\textcolor{blue}{Inequalities \eqref{eq:-65} and \eqref{eq:-66-1}
imply \eqref{eq:-64} with $n=2$.}

\section{\textcolor{blue}{Proof of Proposition \ref{thm:Gaussian}\label{sec:Gaussian}}}

\textcolor{blue}{For continuous random variables, a lower bound on
the information-correlation function is given in the following lemma. }
\begin{lem}
\textup{\textcolor{blue}{\label{thm:Lower-bound}(Lower bound on $C_{\beta}(X;Y)$).
For any absolutely continuous random variables $\left(X,Y\right)$
with correlation coefficient $\beta_{0},$ we have 
\begin{equation}
C_{\beta}(X;Y){\displaystyle \geq h(X,Y)-\frac{1}{2}\log\left[(2\pi e(1-\beta_{0}))^{2}\frac{1+\beta}{1-\beta}\right]}
\end{equation}
for $0\leq\beta\leq\beta_{0}$, and $C_{\beta}(X;Y)=0$ for $\beta_{0}\leq\beta\leq1.$ }}
\end{lem}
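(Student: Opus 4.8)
The plan is to bound $I(X,Y;W)$ uniformly from below over all feasible $W$, i.e. those with $\rho_{\mathrm{m}}(X;Y|W)\le\beta$, since by \eqref{eq:-24-3} the quantity $C_\beta(X;Y)$ is the infimum of $I(X,Y;W)$ over exactly these $W$. Writing $I(X,Y;W)=h(X,Y)-h(X,Y|W)$, where $h(X,Y)$ is a fixed constant, it suffices to establish the uniform bound $h(X,Y|W)\le\frac{1}{2}\log[(2\pi e(1-\beta_0))^2\frac{1+\beta}{1-\beta}]$. I would first normalize so that $\mathrm{var}(X)=\mathrm{var}(Y)=1$ (as in the Gaussian application), whence $\mathrm{cov}(X,Y)=\beta_0$; both the maximal correlation and the mutual information are invariant under this affine rescaling.

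The analytic heart is to replace the conditional entropy by a Gaussian surrogate. For each $w$, the Gaussian maximum-entropy principle gives $h(X,Y|W=w)\le\frac{1}{2}\log[(2\pi e)^2\det\Sigma_w]$, where $\Sigma_w$ is the conditional covariance matrix of $(X,Y)$ given $W=w$. Integrating over $W$ and invoking concavity of $A\mapsto\log\det A$ together with Jensen's inequality gives
\[
h(X,Y|W)\le\frac{1}{2}\log\big[(2\pi e)^2\det\bar\Sigma\big],\qquad
\bar\Sigma=\begin{pmatrix}a&c\\c&b\end{pmatrix},
\]
where $a=\mathbb{E}[\mathrm{var}(X|W)]$, $b=\mathbb{E}[\mathrm{var}(Y|W)]$, $c=\mathbb{E}[\mathrm{cov}(X,Y|W)]$, so $\det\bar\Sigma=ab-c^2$. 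This concavity step is essential: it transfers the problem to the averaged second moments, which is precisely the level at which the feasibility constraint can be expressed.

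It then remains to prove $ab-c^2\le(1-\beta_0)^2\frac{1+\beta}{1-\beta}$ from three facts: the law of total variance gives $a\le1$ and $b\le1$; the law of total covariance with Cauchy--Schwarz gives $c\ge\beta_0-\sqrt{(1-a)(1-b)}$, since $\mathrm{cov}(\mathbb{E}[X|W],\mathbb{E}[Y|W])\le\sqrt{(1-a)(1-b)}$; and $\rho(X;Y|W)=c/\sqrt{ab}\le\rho_{\mathrm{m}}(X;Y|W)\le\beta$ by Theorem \ref{lem:relationship}, i.e. $c\le\beta\sqrt{ab}$. Maximizing $ab-c^2$ over this region, for fixed $a,b$ one minimizes $c^2$ by pushing $c$ to its lower bound $\beta_0-\sqrt{(1-a)(1-b)}$; the substitution $a=1-p^2$, $b=1-q^2$ reduces the objective to $1-\beta_0^2-(p^2+q^2-2\beta_0pq)$, and the symmetric optimum $p=q$ with constraint $c\le\beta\sqrt{ab}$ active (giving $p^2=\frac{\beta_0-\beta}{1-\beta}$) yields exactly $(1-\beta_0)^2\frac{1+\beta}{1-\beta}$. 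I expect this constrained optimization to be the main obstacle: one must confirm that the symmetric point is the global maximizer and separately dispose of the regime where $\beta_0-\sqrt{(1-a)(1-b)}<0$, in which $c$ may be taken to be $0$ and a direct AM--GM estimate gives the strictly smaller bound $ab\le(1-\beta_0)^2$.

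Finally, for $\beta\ge\beta_0$ the right-hand side of the claimed inequality is nonpositive, so the stated lower bound is vacuous; the stronger equality $C_\beta(X;Y)=0$ follows from \eqref{eq:-60} as soon as $\rho_{\mathrm{m}}(X;Y)\le\beta$. In the Gaussian case of interest $\rho_{\mathrm{m}}(X;Y)=|\rho(X;Y)|=\beta_0$ by Theorem \ref{thm:-(Gaussian-case).}, so $C_\beta(X;Y)=0$ throughout $\beta_0\le\beta\le1$; one checks moreover that the lower bound is continuous at $\beta=\beta_0$ and equals $0$ there, so the two regimes match up.
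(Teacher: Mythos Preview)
Your proposal shares the paper's skeleton exactly: write $I(X,Y;W)=h(X,Y)-h(X,Y|W)$, bound the conditional entropy by the Gaussian maximum-entropy inequality, and pass the expectation inside $\log\det$ via Jensen to reduce to bounding $\det\bar\Sigma=ab-c^2$. The divergence is entirely in this last step. The paper does not carry out your constrained optimization; instead it invokes Theorem~\ref{lem:Conditioning-reduces-covariance} (``Conditioning reduces covariance gap''), which under unit variances reads $\sqrt{ab}-c\le 1-\beta_0$, i.e.\ $\sqrt{ab}\,(1-\rho)\le 1-\beta_0$ with $\rho=\rho(X;Y|W)$. From that single line, $ab(1-\rho^2)\le(1-\beta_0)^2\tfrac{1+\rho}{1-\rho}\le(1-\beta_0)^2\tfrac{1+\beta}{1-\beta}$ is immediate. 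Your constraint $c\ge\beta_0-\sqrt{(1-a)(1-b)}$ is actually slightly \emph{sharper} than Theorem~\ref{lem:Conditioning-reduces-covariance} (since $\sqrt{(1-a)(1-b)}\le 1-\sqrt{ab}$), so your optimization must recover at least the same bound, and your $p=q$ candidate does land exactly on $(1-\beta_0)^2\tfrac{1+\beta}{1-\beta}$; but you have converted a one-line citation into a two-regime optimization whose global verification you yourself flag as the main obstacle. In effect you are re-proving the content of Theorem~\ref{lem:Conditioning-reduces-covariance} inside this argument---your ``AM--GM estimate'' in the second regime is precisely the inequality $\sqrt{ab}+\sqrt{(1-a)(1-b)}\le 1$ that appears in the paper's proof of that theorem. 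The paper's route is cleaner; yours is self-contained but longer. Your remark that the equality $C_\beta(X;Y)=0$ for $\beta\ge\beta_0$ really only follows in the Gaussian case (via $\rho_{\mathrm m}(X;Y)=\beta_0$) is correct and matches how the paper uses the lemma.
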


\begin{proof}
\textcolor{blue}{
\begin{align}
I(X,Y;W) & =h(X,Y)-h(X,Y|W)\nonumber \\
 & {\displaystyle \geq h(X,Y)-\mathbb{E}_{W}\frac{1}{2}\log\left[(2\pi e)^{2}\det(\Sigma_{XY|W})\right]}\label{eq:-11}\\
 & {\displaystyle \geq h(X,Y)-\frac{1}{2}\log\left[(2\pi e)^{2}\det(\mathbb{E}_{W}\Sigma_{XY|W})\right]}\label{eq:-27-1-2}\\
 & =h(X,Y)-{\displaystyle \frac{1}{2}\log\left[(2\pi e)^{2}\left[\mathbb{E}\textrm{var}(X|W)\mathbb{E}\textrm{var}(Y|W)-(\mathbb{E}\textrm{cov}(X,Y|W))^{2}\right]\right]}\nonumber \\
 & =h(X,Y)-{\displaystyle \frac{1}{2}\log\left[(2\pi e)^{2}\mathbb{E}\textrm{var}(X|W)\mathbb{E}\textrm{var}(Y|W)(1-\rho^{2}(X;Y|W))\right]}\nonumber \\
 & {\displaystyle \geq h(X,Y)-\frac{1}{2}\log\left[(2\pi e)^{2}(\frac{1-\beta_{0}}{1-\rho(X,Y|W)})^{2}(1-\rho^{2}(X;Y|W))\right]}\label{eq:-28-1-2}\\
 & =h(X,Y)-{\displaystyle \frac{1}{2}\log\left[(2\pi e(1-\beta_{0}))^{2}\frac{1+\rho(X;Y|W)}{1-\rho(X;Y|W)}\right]}\nonumber \\
 & {\displaystyle \geq h(X,Y)-\frac{1}{2}\log\left[(2\pi e(1-\beta_{0}))^{2}\frac{1+\beta}{1-\beta}\right]},\label{eq:-29-1-2}
\end{align}
where \eqref{eq:-11} follows from the fact that given the covariance
matrix $\Sigma_{XY}$ of $\left(X,Y\right)$, $h(X,Y)\leq\frac{1}{2}\log\left[(2\pi e)^{2}\det(\Sigma_{XY})\right]$,
\eqref{eq:-27-1-2} follows from the function $\log(\det(\cdot))$
is concave on the set of symmetric positive definite square matrices
\cite[p.73]{boyd2004convex}, \eqref{eq:-28-1-2} follows from Lemma
\ref{lem:Conditioning-reduces-covariance}, and \eqref{eq:-29-1-2}
follows from the constraint $\rho(X;Y|W)\leq\beta.$ }
\end{proof}
\textcolor{blue}{Furthermore, it is easy to verify that equality in
Theorem \ref{thm:Lower-bound} holds if $X,Y$ are jointly Gaussian.
(This can be shown by choosing $W$ such that $(W,X,Y)$ are jointly
Gaussian.) Hence we complete the proof.}

\section{\textcolor{blue}{Proof of Proposition \ref{thm:DSBS}\label{sec:DSBS}}}

\textcolor{blue}{Assume $W\sim\textrm{Bern}(\frac{1}{2})$. Define
two distributions $P_{X,Y|W}(\cdot|0)=\left[\begin{array}{cc}
a & p_{0}/2\\
p_{0}/2 & b
\end{array}\right],P_{X,Y|W}(\cdot|1)=\left[\begin{array}{cc}
b & p_{0}/2\\
p_{0}/2 & a
\end{array}\right]$ with $a+b=1-p_{0}$. Then $P_{X,Y}=P_{W}(0)P_{X,Y|W}(\cdot|0)+P_{W}(1)P_{X,Y|W}(\cdot|1)$.
By using the formula \cite{anantharam2014hypercontractivity} 
\[
\rho_{\mathrm{m}}^{2}(X;Y)=\left[\sum_{x,y}\frac{P^{2}(x,y)}{P(x)P(y)}\right]-1
\]
for binary-valued $(X,Y)$, we have 
\[
\rho_{\mathrm{m}}(X;Y)=1-2p_{0}.
\]
and 
\begin{align}
\rho_{\mathrm{m}}(X;Y|W & =0)=\rho_{\mathrm{m}}(X;Y|W=1)\nonumber \\
 & =\sqrt{\frac{2\left(p_{0}/2\right)^{2}}{\left(a+p_{0}/2\right)\left(b+p_{0}/2\right)}+\frac{a^{2}}{\left(a+p_{0}/2\right)^{2}}+\frac{b^{2}}{\left(b+p_{0}/2\right)^{2}}-1}.\label{eq:-39}
\end{align}
Hence $\rho_{\mathrm{m}}(X;Y|W)$ is also equal to the RHS of \eqref{eq:-39}.
By choosing 
\begin{align*}
a & =\frac{1}{2}\left(1-p_{0}+\sqrt{\frac{1-2p_{0}-\beta}{1-\beta}}\right)\\
b & =\frac{1}{2}\left(1-p_{0}-\sqrt{\frac{1-2p_{0}-\beta}{1-\beta}}\right),
\end{align*}
we have $\rho_{\mathrm{m}}(X;Y|W)\leq\beta$. For this case, $I(X,Y;W)$
is equal to the RHS of \eqref{eq:-48}. Hence, by definition, the
RHS of \eqref{eq:-48} is an upper bound of $C_{\beta}(X;Y)$.}

\subsection*{Acknowledgments}

The authors are supported in part by a Singapore National Research
Foundation (NRF) National Cybersecurity R\&D Grant (R-263-000-C74-281
and NRF2015NCR-NCR003-006), and in part by a National Natural Science
Foundation of China (NSFC) under Grant (61631017).

 \bibliographystyle{unsrt}
\bibliography{ref}

\end{document}